\title{Extremal Hypergraphs for Ryser's Conjecture:\\
Connectedness of Line Graphs of Bipartite Graphs}
\author{Penny Haxell\thanks{Department of Combinatorics and Optimization, University of Waterloo, Waterloo, ON, Canada. Partially supported by NSERC and by a Friedrich Wilhelm Bessel
 Award of the Alexander von Humboldt Foundation.} \quad Lothar Narins\thanks{Freie Universit\"at Berlin, Berlin, Germany. Supported by the Research Training Group 
 \emph{Methods for Discrete Structures} and the Berlin Mathematical School.} \quad 
Tibor Szab\'{o}\thanks{Freie Universit\"at Berlin, Berlin, Germany. Research partially supported by DFG within the Research Training Group \emph{Methods for Discrete Structures.}}}
\date{\today}
\newtheorem{thm}{Theorem}[section]
\newtheorem{cor}[thm]{Corollary}
\newtheorem{lem}[thm]{Lemma}
\newtheorem{prop}[thm]{Proposition}
\newtheorem{conj}{Conjecture}
\newtheorem*{claim}{Claim}
\theoremstyle{definition}
\newtheorem{defn}[thm]{Definition}
\newtheorem*{assm}{Assumptions}
\theoremstyle{remark}
\numberwithin{equation}{section}
\DeclareMathOperator{\conn}{conn}
\DeclareMathOperator{\lk}{lk}
\newcommand{\N}{\mathbb{N}}
\newcommand{\R}{\mathbb{R}}
\newcommand{\cB}{\mathcal{B}}
\newcommand{\cC}{\mathcal{C}}
\newcommand{\cD}{\mathcal{D}}
\newcommand{\cE}{\mathcal{E}}
\newcommand{\cG}{\mathcal{G}}
\newcommand{\cH}{\mathcal{H}}
\newcommand{\cI}{\mathcal{I}}
\newcommand{\cK}{\mathcal{K}}
\newcommand{\cM}{\mathcal{M}}
\newcommand{\cN}{\mathcal{M}}
\newcommand{\cP}{\mathcal{P}}
\newcommand{\cS}{\mathcal{S}}
\newcommand{\cT}{\mathcal{T}}
\newcommand{\lset}[1]{\left\{#1\right\}}
\newcommand{\set}[2]{\left\{#1 : #2 \right\}}
\newcommand{\abs}[1]{\left|#1\right|}
\newcommand{\poly}[1]{\left\|#1\right\|}
\newcommand{\explode}{\divideontimes}
\newcommand{\link}[2]{\lk_{#1}(#2)}
\begin{document}
\maketitle
\markboth{Extremal Hypergraphs for Ryser's Conjecture}
{Connectedness of Line Graphs of Bipartite Graphs}
\renewcommand{\sectionmark}[1]{}

\begin{abstract}
In this paper we consider a natural extremal graph theoretic problem of topological
sort,  concerning the minimization of the (topological) connectedness of the independence 
complex of graphs in terms of its dimension. 
We observe that the lower bound $\frac{dim({\cal I}(G))}{2}-2$ on the connectedness 
of the independence complex ${\cal I}(G)$ of line graphs of bipartite graphs $G$
is tight. In our main theorem we characterize the extremal examples. 
Our proof of this characterization is based on topological machinery. 

Our motivation for studying this problem comes from a classical conjecture of Ryser.
Ryser's Conjecture states that any $r$-partite $r$-uniform hypergraph has a vertex cover of size at most $(r - 1)$-times the size of the largest matching. For $r = 2$, the conjecture is simply K\"onig's Theorem. It has also been proven for $r = 3$ by Aharoni using a beautiful topological argument. 
In a separate paper we characterize the extremal examples for the $3$-uniform case of 
Ryser's Conjecture (i.e., Aharoni's Theorem), and in particular resolve an old 
conjecture of Lov\'asz for the case of Ryser-extremal $3$-graphs. 

Our main result in this paper will provide us with valuable structural information for 
that characterization.
Its proof  is based on the observation that link graphs of 
Ryser-extremal $3$-uniform hypergraphs are 
{\em exactly} the bipartite graphs we study here.

%
\end{abstract}

\section{Introduction} \label{sec:introduction}

A hypergraph $\cH$ is a pair $(V, E)$, where $V = V(\cH)$ is the set of \emph{vertices}, and $E = E(\cH)$ is a {\bf multiset} of subsets of vertices called the \emph{edges} of $\cH$. The number of times a subset $e\subseteq V$ appears in $E$ is called the {\em multiplicity} of $e$. If the cardinality of every edge is $r$, we call $\cH$ an \emph{$r$-graph}. A $2$-graph is called a \emph{graph}. In our paper we mostly have no restriction on the multiplicity of edges; whenever we want to assume that each multiplicity is at most $1$, we will explicitly say \emph{simple hypergraph}, \emph{simple $r$-graph}, or \emph{simple graph}. An edge $e \in E$ is called \emph{parallel} to an edge $f \in E$ if their underlying vertex subsets are the same. In particular, every edge is parallel to itself.

Let $\cH$ be a hypergraph. A \emph{matching} in $\cH$ is a set of disjoint edges of $\cH$, and the \emph{matching number}, $\nu(\cH)$, is the size of the largest matching in $\cH$. If $\nu(\cH) = 1$, then $\cH$ is called \emph{intersecting}. A \emph{vertex cover} of $\cH$ is a set of vertices which intersects every edge of $\cH$. The size of the smallest vertex cover is called the \emph{vertex cover number} of $\cH$ and is denoted by $\tau(\cH)$. It is immediate to see that if $\cH$ is $r$-uniform, then the following bounds always hold:
\[
	\nu(\cH) \leq \tau(\cH) \leq r\nu(\cH).
\]

Both inequalities are easily seen to be tight for general hypergraphs. Ryser's Conjecture~\cite{ryser}, which appeared first in the late 1960's, states that the upper bound can be lowered by considering only $r$-partite hypergraphs. (An even stronger conjecture was made around the same time by Lov\'asz~\cite{lovasz}.) An $r$-graph is called \emph{$r$-partite} if its vertices can be partitioned into $r$ parts called \emph{vertex classes} such that every edge intersects each vertex class in exactly one vertex.

\begin{conj}[Ryser's Conjecture]
If $\cH$ is an $r$-partite $r$-graph, then
\[
	\tau(\cH) \leq (r - 1)\nu(\cH).
\]
\end{conj}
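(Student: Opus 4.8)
The plan is to handle first the two ranges of $r$ in which the conjecture is actually a theorem, and then to describe the topological framework — the one this paper feeds into — that is the natural vehicle for $r=3$ and the only plausible-looking route beyond it.

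For $r=2$ the assertion $\tau(\cH)\le\nu(\cH)$, combined with the universal bound $\nu(\cH)\le\tau(\cH)$, is exactly K\"onig's Theorem, so I would simply prove that. The quickest argument is the alternating-path one: take a maximum matching $M$ in the bipartite graph $\cH$ with classes $X,Y$; let $Z$ be the set of vertices reachable from the $M$-unsaturated vertices of $X$ along $M$-alternating paths; then $C=(X\setminus Z)\cup(Y\cap Z)$ is a vertex cover, and every edge of $M$ contributes exactly one vertex to $C$ (and none contributes two), so $|C|=|M|=\nu(\cH)$. Equivalently one invokes total unimodularity of the vertex--edge incidence matrix of a bipartite graph and LP duality, which makes $\tau$ equal to its fractional relaxation and hence to $\nu$.

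For $r=3$ the only known proof is Aharoni's, and the present paper is built precisely to exploit its mechanism, so that is the route I would take. Fix vertex classes $A,B,C$ and, for each $a\in A$, let $G_a$ be the bipartite ``link'' graph on $B\cup C$ consisting of the pairs completing $a$ to an edge. One applies the Aharoni--Haxell topological Hall theorem for matchings in hypergraphs: if for every $A'\subseteq A$ the union $\bigcup_{a\in A'}G_a$ has matching number at least $2|A'|-1$, then the $G_a$ admit a rainbow matching, i.e.\ $\nu(\cH)\ge|A|$. When this Hall-type condition fails, the standard move is to pass to a witnessing sub-collection and recurse, and the accounting controlling the loss at each step is exactly a lower bound on the topological connectedness of the independence complex of the line graph of the bipartite graph $\bigcup_{a\in A'}G_a$ — equivalently, of its matching complex. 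Meshulam-type estimates give roughly $\nu(G)/2$ for this connectedness in general, and the sharper, structural understanding of those bipartite graphs for which the bound is tight — the characterization proved in this paper — is what lets one push the induction through to $\tau(\cH)\le 2\nu(\cH)$ and, afterwards, pin down exactly when equality holds.

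For general $r$ I do not see a proof, and I expect the main obstacle to be intrinsic rather than technical: the Aharoni--Haxell machinery, iterated $r-1$ times, loses a constant factor at each step, and the graph-theoretic connectedness estimate used for $r=3$ has no known $(r-1)$-dimensional analogue strong enough to survive the iteration. A successful attack would need either a genuinely multipartite connectedness statement replacing the line-graph bound, or a fundamentally non-topological idea; absent that, the conjecture remains open for every $r\ge 4$.
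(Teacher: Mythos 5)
You have correctly recognized that the statement under discussion is a \emph{conjecture}, not a theorem: it is open for all $r\ge 6$, and neither this paper nor your write-up proves it in general. The paper merely states it and, along the way, reproves the $r=3$ case (Aharoni's theorem) in Section~\ref{sec:ryserforrequals3}. Your honesty about the general case is appropriate, and your $r=2$ sketch (alternating paths / LP duality giving K\"onig) is fine and is simply taken as known in the paper.

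Where your account of the $r=3$ case goes wrong is in the role you assign to the main result of this paper. You write that the characterization of bipartite graphs extremal for the connectedness bound ``is what lets one push the induction through to $\tau(\cH)\le 2\nu(\cH)$.'' That is backwards. The inequality $\tau\le 2\nu$ for $3$-partite $3$-graphs requires only the \emph{lower} bound on connectedness, namely Theorem~\ref{thm:matchconn} ($\conn(L(G))\ge\nu(G)/2-2$), combined with the Sperner/deficiency machinery of Theorem~\ref{thm:deficiencyrainbowmatching}. Concretely, Proposition~\ref{prop:linkconn} lower-bounds $\nu(\link{\cH}{S})$ via a vertex-cover argument and K\"onig, turns this into a connectedness lower bound by Theorem~\ref{thm:matchconn}, and then invokes the deficiency rainbow theorem with $d=|V_i|-\nu(\cH)-1$ to force a matching larger than $\nu(\cH)$ unless $\tau\le 2\nu$. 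No CP-decomposition, no characterization of extremal graphs, and no recursion ``passing to a witnessing sub-collection'' is involved; it is a direct one-shot deficiency argument. The CP-decomposition (Theorem~\ref{thm:cpdecomposition}) enters only afterwards, to pin down the structure of the \emph{equality} cases $\tau=2\nu$, which is the actual subject of this paper and its sequel. So your sketch of the $r=3$ proof uses the right family of tools but misattributes the load-bearing step, and the recursive scheme you describe is not the argument the paper (or Aharoni) uses.
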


This conjecture turned out to be extremely difficult to attack. It is solved completely only for $r = 2$ and $3$, and a few partial results exists for values of $r \leq 9$. In particular, when $r = 2$, the conjecture is just the well known K\"onig's Theorem. 
For $r = 3$ Ryser's Conjecture was solved by Aharoni via topological methods~\cite{aharoni}.
The conjecture is wide open for $r \geq 4$. Haxell and Scott~\cite{haxellscott} have proven that for $r = 4, 5$ there is an $\epsilon > 0$ such that $\tau(\cH) \leq (r - \epsilon)\nu(\cH)$ for any $r$-partite $r$-graph $\cH$. 
The conjecture has been proven for intersecting hypergraphs when $r \leq 5$ by Tuza (\cite{tuza1},~\cite{tuza2}), with $r \geq 6$ still open. Recently Franceti\'c, Herke, McKay, and Wanless~\cite{fhmw-2015}  proved 
Ryser's Conjecture  for linear intersecting hypergraphs (i.e., hypergraphs where any two hyperedges intersect in exactly one
vertex) when $r\leq 9$.  
Fractional versions of the conjecture have also been studied, and it was shown by F\"uredi~\cite{fueredi} that $\tau^* \leq (r - 1)\nu$, and shown by Lov\'asz~\cite{lovasz} that $\tau \leq \frac{r}{2}\nu^*$, where $\tau^*$ and $\nu^*$ are the fractional vertex cover and matching numbers, respectively. Aharoni and Berger~\cite{aharoniberger} also formulated a generalization of the conjecture to matroids, which has been partially solved in a special case by Berger and Ziv~\cite{bergerziv}. Mansour, Song, and Yuster~\cite{mansoursongyuster} have found bounds on the minimum number of edges for an intersecting $r$-partite $r$-graph to be tight for Ryser's conjecture, and determined the exact numbers for the cases $r \leq 5$. Subsequently Aharoni, Bar\'at and Wanless~\cite{aharonibaratwanless} found these values for $r=6,7$ (see also~\cite{abuprok}).

One plausible approach to Ryser's Conjecture for $4$-graphs is via studying the $3$-uniform link hypergraphs. Given three of the four vertex classes $V_1$, $V_2$, $V_3$ of a $4$-partite $4$-graph $\cH$, the link hypergraph of $V_4$ in $\cH$ is the multiset of those $3$-element sets which are the intersection of an edge of $\cH$ with $V_1 \cup V_2 \cup V_3$. Having structural information on the links would be helpful in understanding the situation for $4$-graphs. Aharoni's proof however does not provide information on the $3$-graphs which are extremal for his theorem. Our eventual aim is to give a complete characterization of them. In a follow-up paper~\cite{HNS2} we prove the following theorem:

\begin{thm}[\cite{HNS2}] \label{thm:characterization}
Let $\cH$ be a $3$-partite $3$-graph. Then $\tau(\cH) = 2\nu(\cH)$ if and only if $\cH$ is a home-base hypergraph.
\end{thm}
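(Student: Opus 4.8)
The plan is to treat the two implications separately, with the forward direction (home-base $\Rightarrow$ tight) a bounded verification and the reverse direction carrying the real weight via induction on $\nu(\cH)$ together with the connectedness characterization proved in the present paper. For the forward implication: since Aharoni's theorem already supplies $\tau(\cH) \le 2\nu(\cH)$ for every $3$-partite $3$-graph, it suffices to exhibit, for a home-base hypergraph of matching number $k$, an obstruction to any vertex cover of size $2k-1$. Unwinding the recursive description of home-base hypergraphs — $k$ copies of a basic intersecting configuration on six vertices, glued along shared vertices and joined by a few extra "linking" edges — one checks that a cover must spend two vertices per block and that the linking edges prevent any economy across blocks; parallel edges only make covering harder, so the thickened versions cause no trouble. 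I would carry this out first to fix intuition about the family.

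For the reverse implication, assume $\tau(\cH) = 2\nu(\cH) = 2k$ and induct on $k$. The base case $k = 1$ is a finite problem — classify intersecting $3$-partite $3$-graphs with $\tau = 2$ — whose answer is precisely the basic home configuration and its parallel thickenings. For the inductive step I would reopen Aharoni's topological proof of $\tau \le 2\nu$: fix a vertex class, say $V_3$, and for each $v \in V_3$ let $L_v$ be the link graph of $v$, a bipartite graph between $V_1$ and $V_2$ whose matchings are the faces of the independence complex $\mathcal{I}(L(L_v))$ of its line graph. Aharoni derives $\tau \le 2\nu$ by feeding the system $\{L_v\}$ into the topological Hall theorem, using a lower bound on $\conn(\mathcal{I}(L(L_v)))$ in terms of $\nu(L_v)$. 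If $\tau(\cH) = 2k$ exactly, every inequality in that derivation must hold with equality; in particular, for some set $S \subseteq V_3$ the bipartite graph $\bigcup_{v \in S} L_v$ has the property that the independence complex of its line graph attains the \emph{smallest} connectedness compatible with its dimension — precisely the extremal scenario classified by the main theorem of this paper. Invoking that classification yields rigid structural information about the relevant link graphs.

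Armed with that structure, I would locate inside $\cH$ a sub-hypergraph $\cH'$ with $\nu(\cH') = k-1$ that still satisfies $\tau(\cH') = 2(k-1)$, apply the induction hypothesis to conclude $\cH'$ is home-base, and then argue that the extremal link structure forces the "missing" block and its attaching edges to fit the home-base template, so that $\cH$ itself is home-base. Some care is needed in choosing which vertex class to pivot on (possibly iterating over all three) and in making the extracted $\cH'$ genuinely satisfy the inductive hypotheses rather than just approximately.

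The step I expect to be the real obstacle is this last transfer: converting a \emph{topological} equality statement (minimum connectedness of a matching complex) into enough \emph{combinatorial} rigidity to perform the surgery, and then verifying that the reconstructed hypergraph meets the exact definition of a home-base hypergraph rather than something merely similar to one. A preliminary difficulty is that the topological Hall theorem as usually quoted only asserts existence, so one first needs a sharp "deficiency" form of it whose equality case is governed exactly by $\conn(\mathcal{I}(L(\cdot)))$ of the appropriate union of links; and throughout, multiplicities must be tracked with care, since — as the main theorem of this paper already demonstrates on the bipartite side — parallel edges genuinely affect which configurations are extremal.
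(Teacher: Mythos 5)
You should first note that this paper does not actually prove Theorem~\ref{thm:characterization}: it is stated as a result of the companion paper~\cite{HNS2}, and even the definition of a home-base hypergraph is deferred there. What the present paper supplies are ingredients for that proof (Theorem~\ref{thm:connoflink}, the CP-decomposition theorem, and the good-sets machinery of Section~\ref{sec:goodsets}). Your high-level plan does match the intended architecture: revisit Aharoni's topological argument, show that equality in $\tau = 2\nu$ forces the link graphs to be extremal for Theorem~\ref{thm:matchconn} (this is exactly Theorem~\ref{thm:connoflink}, obtained via the deficiency form of the topological Hall theorem, Theorem~\ref{thm:deficiencyrainbowmatching}), invoke the characterization of extremal bipartite graphs, and induct on $\nu$. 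One small correction: the extremal set of links is $S = V_i$ itself (part~\ref{linkconn:size} of Proposition~\ref{prop:linkconn} forces this), not some unspecified $S \subseteq V_3$.

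As a proof, however, the proposal has genuine gaps. First, you never define a home-base hypergraph; your guessed description (``$k$ copies of a basic intersecting configuration on six vertices, glued along shared vertices and joined by a few extra linking edges'') is an invention, and both directions of the equivalence are vacuous until the family is pinned down exactly. The paper warns that the definition is precise but technical and that for every $k$ there are infinitely many home-base hypergraphs with $\nu = k$, so ``two vertices per block plus linking edges prevent economy'' is not a verification of the forward direction. Second, and more seriously, your inductive step hinges on locating a sub-hypergraph $\cH'$ with $\nu(\cH') = k - 1$ and $\tau(\cH') = 2(k-1)$ and then reattaching the removed block; you yourself flag this as ``the real obstacle'' but offer no mechanism for it. That mechanism is precisely what the good-sets theory of Section~\ref{sec:goodsets} (combined with the combinatorial rigidity extracted from CP-decompositions) is built to provide: deleting a $C_4$- or $P_4$-block from $\cH$ does not obviously preserve the equality $\tau = 2\nu$ on the remainder, and without a tool like Lemma~\ref{lem:goodsets} the surgery is unsupported. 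In short, you have correctly reverse-engineered the skeleton of the intended argument, but the proposal leaves open exactly the points where the actual proof --- which occupies the entire companion paper --- does its work.
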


Home-base hypergraphs are $3$-graphs with a certain restricted structure. For every $k$, however, there are infinitely many of them with matching number $k$ and covering number $2k$. Home-base hypergraphs are \emph{not} the focus of our current paper, hence their precise but somewhat technical definition will only be given in \cite{HNS2}.

We say that a $3$-partite $3$-graph $\cH$ is \emph{Ryser-extremal} if $\tau(\cH) = 2\nu(\cH)$. In the present paper we develop the necessary knowledge about the link graphs of Ryser-extremal $3$-graphs. First we show that these link graphs are extremal with respect to a natural extremal graph theoretic problem of topological nature. In our main theorem, Theorem~\ref{thm:bipartitegraphchar} below, we  characterize all those bipartite graphs that are extremal for this problem. The structure we derive from this characterization theorem will be an integral part of our proof of Theorem~\ref{thm:characterization} in \cite{HNS2}, nevertheless we find the extremal graph theory problem interesting in its own right.

\subsection{Connectedness of the Line Graphs of Bipartite Graphs}

The connectedness of the independence complex will be our main parameter to describe the line graphs of the link graphs of Ryser-extremal $3$-graphs.

Let $k \geq -1$ be an integer. A topological space $X$ is said to be \emph{$k$-connected} if for any integer $j$ with $-1 \leq j \leq k$, any continuous map from the $j$-dimensional sphere $S^j$ into the space $X$ can be extended to a continuous map from the $(j + 1)$-dimensional ball $B^{j + 1}$ to $X$. The \emph{connectedness} of $X$, denoted $\conn(X)$ is the largest $k$ for which $X$ is $k$-connected. 

A simplicial complex $\cK$ is a family of simplices in $\R^N$ such that (1) if $\tau$ is a face of a simplex $\sigma \in \cK$ then $\tau \in \cK$ and (2) if $\sigma, \sigma' \in \cK$ then $\sigma \cap \sigma'$ is a face of both $\sigma$ and $\sigma'$. The connectedness of a simplicial complex ${\cal K}$ is just the connectedness of its body $\poly{\cK}$ (the union of its simplices).

An \emph{abstract simplicial complex} $\cC$ is a simple hypergraph that is closed under taking subsets. The simple hypergraph consisting of the vertex sets of simplices of a simplicial complex $\cK$ (called the vertex scheme of $\cK$) is an abstract simplicial complex. Every abstract simplicial complex $\cC$ has a geometric realization, that is a simplicial complex whose vertex scheme is $\cC$. The geometric realization is unique up to homeomorphism. The connectedness of an abstract simplicial complex is just the connectedness of its geometric realization.

For a graph $G$, we define the \emph{independence complex} $\cI(G)$ to be the abstract simplicial complex on the vertices of $G$ whose simplices are the independent sets of $G$.  We will simply write $\conn(G)$ for $\conn(\cI(G))$, and refer to this as the \emph{connectedness} of $G$. We remark that in several other papers in this area (e.g. 
\cite{aharoniberger}) the parameter $\eta({\cal I}(G))=\conn(G)+2$ is used, 
which typically allows results to be stated more neatly. However, since in particular here 
we explicitly describe a significant amount of background material in terms of simplicial 
complexes, we choose to stick with the more traditional terminology.

One of the basic parameters of a simplicial complex is its \emph{dimension}, that is, the largest dimension that occurs among its simplices. The connectedness of an arbitrary simplicial complex, or even of an arbitrary graph's independence complex can be arbitrarily small while its dimension is large: just consider the complete bipartite graph $K_{d + 1, d + 1}$, having an independence complex with dimension $d$ and connectedness $-1$.

Comparing dimension and connectedness becomes more interesting if we introduce restrictions on the graphs we consider. For \emph{line graphs} for example, a lower bound on the connectedness in terms of the dimension is implicit in the work of Aharoni and Haxell~\cite{aharonihaxell} (see also~\cite{aharoniberger}). The \emph{line graph} $L(\cH)$ of a hypergraph $\cH$ is the simple graph $L(\cH)$ on the vertex set $E(\cH)$ with  $e, f \in V(L(\cH))$ adjacent if $e \cap f \neq \emptyset$. With foresight, we state the lower bound of~\cite{aharonihaxell} in a more general format, which will be necessary for our investigations.  Note that the dimension of the independence complex of a line graph of a hypergraph is just its matching number minus $1$.

\begin{thm} \label{thm:matchconn} 
Let $\cG$ be an $r$-graph, and let $J \subseteq L(\cG)$ be a subgraph of the line graph of $\cG$. Let $M \subseteq V(J)$ be a matching in $\cG$. Then
\[
	\conn(J) \geq \frac{\abs{M}}{r} - 2.
\]
In particular, for any graph $G$ we have $\conn(L(G)) \geq \frac{\nu(G)}{2} - 2$.
\end{thm}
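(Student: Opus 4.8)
The plan is to prove the general statement by induction on $\abs{V(J)}$, using the standard decomposition of an independence complex via the closed star and the deletion of a vertex. The only input from topology I will treat as a black box is the gluing lemma (a routine consequence of the Mayer--Vietoris sequence, van Kampen's theorem, and the Hurewicz theorem): if a simplicial complex is the union of two subcomplexes $A$ and $B$, then its connectedness is at least $\min\{\conn(A),\ \conn(B),\ \conn(A\cap B)+1\}$.

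First I would record the elementary facts about independence complexes: for a graph $H$ and a vertex $v$, the faces of $\cI(H)$ not containing $v$ form the subcomplex $\cI(H - v)$; the faces that together with $v$ still form a face make up the closed star of $v$, which is a cone with apex $v$ over the link $\link{\cI(H)}{v} = \cI(H - N_H[v])$ and hence is contractible; and these two subcomplexes cover $\cI(H)$ and meet precisely in $\cI(H - N_H[v])$. Plugging this into the gluing lemma gives
\[
	\conn(H) \;\geq\; \min\bigl\{\conn(H - v),\ \conn(H - N_H[v]) + 1\bigr\}.
\]

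For the base of the induction, suppose $V(J) = M$. The edges of $\cG$ lying in $M$ are pairwise disjoint, so no two of them are adjacent in $L(\cG)$; hence $J$ has no edges, $\cI(J)$ is a full simplex (or empty), and the claimed bound holds vacuously. Otherwise, pick a vertex $v \in V(J) \setminus M$ and apply the displayed inequality with $H = J$. For the deletion term, $M$ is still a matching contained in $V(J - v)$ and $\abs{V(J - v)} < \abs{V(J)}$, so by induction $\conn(J - v) \geq \abs{M}/r - 2$. For the link term, the crucial point is that $v$, viewed as an edge of $\cG$, has exactly $r$ vertices, while each vertex of $\cG$ lies in at most one edge of $M$; therefore at most $r$ members of $M$ meet $v$, and $M' := M \setminus N_J[v]$ is a matching contained in $V(J - N_J[v])$ with $\abs{M'} \geq \abs{M} - r$. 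Since $\abs{V(J - N_J[v])} < \abs{V(J)}$, induction gives $\conn(J - N_J[v]) \geq \abs{M'}/r - 2 \geq \abs{M}/r - 3$, so the link term also contributes at least $\abs{M}/r - 2$. Combining the two, $\conn(J) \geq \abs{M}/r - 2$, completing the induction. The ``in particular'' statement is then the special case $\cG = G$, $r = 2$, $J = L(G)$, with $M$ a maximum matching of $G$.

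Conceptually there is no hard step here — this is essentially the argument implicit in~\cite{aharonihaxell} — but two points need attention. The combinatorial heart is choosing the deleted vertex \emph{outside} $M$: this is what lets both branches of the recursion return the clean bound, the deletion branch because $M$ survives intact and the link branch because a single vertex of $\cG$ can lie in at most one matching edge (so at most $r$ edges of $M$ are lost). The other point, which is the only real bookkeeping, is making sure the conventions in the gluing lemma, in the definition of $k$-connectedness, and in the arithmetic of the induction line up across degenerate cases, for instance when $M = \emptyset$ or when $N_J[v]$ already covers $V(J)$ so that $\cI(J - N_J[v])$ is empty.
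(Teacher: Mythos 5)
Your proof is correct, and it takes a slightly different route from the paper's. The paper inducts on $\abs{E(J)}$ and applies the edge version of the Meshulam-type inequality (its Theorem~\ref{thm:meshulam}), $\conn(J) \geq \min\lset{\conn(J - me), \conn(J \explode me) + 1}$, choosing the edge $me$ of $J$ so that $m \in M$; the deletion branch preserves $M$ because the vertex set of $J$ is unchanged, and the explosion branch loses at most $r$ edges of $M$ because $m$'s neighbours are disjoint from $M$ and $e$ meets at most $r$ members of $M$. You instead induct on $\abs{V(J)}$ and use the vertex decomposition of $\cI(J)$ into the closed star of $v$ (a cone, hence contractible) and $\cI(J - v)$, meeting in $\cI(J - N_J[v])$, with the key choice $v \notin M$ so that the deletion branch keeps $M$ intact and the star branch loses at most $r$ edges of $M$. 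The combinatorial heart is the same in both arguments (an edge of an $r$-graph meets at most $r$ edges of a matching), but your topological input is strictly more elementary: the star/deletion cover follows immediately from the gluing lemma because one piece is a cone, whereas the edge version needs the extra observation that $\cI(G-e)$ is obtained from $\cI(G)$ by attaching $e * \cI(G \explode e)$ along a suspension. The trade-off is that the paper's Theorem~\ref{thm:meshulam} is needed elsewhere anyway (it drives the whole CP-decomposition machinery), so reusing it here costs nothing, while your version would be the natural choice in a self-contained treatment of Theorem~\ref{thm:matchconn} alone. Your handling of the degenerate cases ($M = \emptyset$, $N_J[v] = V(J)$, the base case $V(J) = M$ giving an edgeless $J$ with contractible or empty $\cI(J)$) is consistent with the conventions $\conn(\emptyset) = -2$ and $\conn(\text{cone}) = \infty$, so the arithmetic goes through.
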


Aharoni and Haxell~\cite{aharonihaxell} essentially proved that the connectedness of the line graph is at least the so called \emph{independent set domination number} $i\gamma$ of the line graph minus $2$ (where $i\gamma (G)$ is the smallest number $x$, such that every independent set of $G$ can be dominated with $x$ vertices.) Theorem~\ref{thm:matchconn} then follows from $i\gamma (L({\cal H})) \geq \frac{\nu({\cal H})}{r}$, which is immediate from the definitions. For completeness we give a proof of Theorem~\ref{thm:matchconn} in
the Appendix.

We begin our study of Ryser-extremal $3$-graphs with their link graphs.

\begin{defn} \label{def:linkg}
Let $\cH$ be a $3$-partite $3$-graph with parts $V_1$, $V_2$, and $V_3$. Let $S \subseteq V_i$ for some $i = 1, 2, 3$. Then the \emph{link graph} $\link{\cH}{S}$ is the bipartite graph with vertex classes $V_j$ and $V_k$ (where $\lset{i, j, k} = \lset{1, 2, 3}$) whose edge multiset is $\set{e \setminus V_i}{e \in E(\cH), e \cap V_i \subseteq S}$.
\end{defn}

Note that a pair of vertices appears as an edge in $\link{\cH}{S}$ with the same multiplicity as the number of edges in $\cH$ that contain it together with a vertex from $S$.

As implied by the proof of~\cite{aharoni},  the link graphs of Ryser-extremal $3$-graphs attest that Theorem~\ref{thm:matchconn} is optimal for $r = 2$, that is, among bipartite graphs they minimize the connectedness of the independence complex of the line graph.

\begin{thm} \label{thm:connoflink} 
If $\cH$ is a $3$-partite $3$-graph with vertex classes $V_1$, $V_2$, and $V_3$, such that $\tau(\cH) = 2\nu(\cH)$, then for each $i$ we have
\begin{enumerate}
	\renewcommand{\theenumi}{(\roman{enumi})}
	\renewcommand{\labelenumi}{\theenumi}
	\item \label{connoflink:conn} $\conn(L(\link{\cH}{V_i})) = \nu(\cH) - 2$.
	\item \label{connoflink:nu} $\nu(\link{\cH}{V_i}) = \tau(\cH)$.
\end{enumerate}
In particular 
\begin{equation}\label{eq:linkconn}
	\conn(L(\link{\cH}{V_i})) = \frac{\nu(\link{\cH}{V_i})}{2} - 2. 
\end{equation}
\end{thm}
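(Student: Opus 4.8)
The plan is to derive everything from Theorem~\ref{thm:matchconn} together with the hypothesis $\tau(\cH) = 2\nu(\cH)$, exploiting the fact that Ryser-extremality is a very rigid condition. Fix $i$, write $G = \link{\cH}{V_i}$, and set $\nu = \nu(\cH)$, $\tau = \tau(\cH) = 2\nu$. The key observation linking $\cH$ and $G$ is that a matching in $G$ of size $m$ together with a transversal to the edges of $\cH$ that produced it (one vertex of $S$ per edge) is not directly a matching in $\cH$, but a vertex cover of $\cH$ of size $c$ restricts the structure of $G$: if $C$ is a minimum cover of $\cH$, then $C \cap V_i$ together with $C \setminus V_i$ must cover $G$ in a suitable sense, and one shows $\nu(G) \le \tau$ because each edge of $G$ lifts to an edge of $\cH$ covered by a vertex not in $V_i$ outside of $C \cap V_i$'s contribution --- more carefully, from a vertex cover of size $2\nu$ of $\cH$ one extracts a vertex cover of $G$ of size $2\nu$, giving $\nu(G) = \tau(G) \le 2\nu$ by König's theorem. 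So the first step is to prove $\nu(G) \le \tau(\cH)$, the easy direction of~\ref{connoflink:nu}, by a direct combinatorial argument about how minimum covers of $\cH$ interact with Definition~\ref{def:linkg}.

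The second step is the lower bound on connectedness coming for free: by Theorem~\ref{thm:matchconn} applied with $\cG = \cH$, $r = 2$ is not available since $\cH$ is a $3$-graph --- instead I would apply it with the graph $G$ itself, viewing $G$ as a $2$-graph and $J = L(G)$, to get $\conn(L(G)) \ge \nu(G)/2 - 2$. That alone is too weak. The real lower bound must use $\cH$: I would apply Theorem~\ref{thm:matchconn} to the $3$-graph $\cH$ with $J$ chosen to be (a copy of) $L(G)$ sitting inside $L(\cH)$ via the lifting of $G$-edges to $\cH$-edges, and $M$ a maximum matching of $\cH$ of size $\nu$, all of whose edges can be taken inside $V(J)$; this yields $\conn(L(G)) = \conn(J) \ge \nu/3 - 2$, still the wrong constant. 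To get the sharp bound $\conn(L(G)) \ge \nu - 2$ one must instead observe that $L(G)$ contains $L(\link{\cH}{V_i})$, whose matchings of size $\nu$ lift not to a matching of $\cH$ but to a structure where Theorem~\ref{thm:matchconn}'s proof (the $i\gamma$ bound) gives $i\gamma(L(G)) \ge \nu$ directly from extremality --- so the honest approach is: show $i\gamma(L(G)) \ge \nu$ using $\tau(\cH) = 2\nu$, then quote the Aharoni--Haxell result $\conn(L(G)) \ge i\gamma(L(G)) - 2$ mentioned in the text.

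The third step is the matching upper bound on connectedness, $\conn(L(G)) \le \nu - 2$, which I expect to be the main obstacle. Here one needs an explicit topological obstruction: a nontrivial map $S^{\nu - 2} \to \|\cI(L(G))\|$ that does not extend over $B^{\nu-1}$, equivalently a nonvanishing reduced homology class in dimension $\nu - 2$. The natural candidate is built from a minimum vertex cover $C$ of $\cH$ of size $2\nu$: one partitions or pairs up the $2\nu$ cover vertices to produce $\nu$ "independent directions" in $\cI(L(G))$ whose join-like arrangement realizes the boundary of a cross-polytope (the standard source of $(\nu-2)$-spheres in independence complexes, as with $K_{d+1,d+1}$). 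Making this precise requires understanding how edges of $G$ are distributed among the cover classes and showing the relevant pairs of edge-sets are genuinely non-adjacent in $L(G)$ (i.e.\ disjoint as edges of $G$), which is exactly where Ryser-extremality --- the impossibility of covering with fewer than $2\nu$ vertices --- must be invoked to rule out collapses. Once both bounds on $\conn$ are in hand, part~\ref{connoflink:conn} follows; combining it with step one's inequality $\nu(G) \le 2\nu$ and the lower bound $\nu(G) \ge 2\nu$ (from, e.g., the same cross-polytope witnessing that $L(G)$ has a matching of size $2\nu$, or from $\nu(G) = \tau(G) \ge i\gamma(L(G)) \ge \nu$ improved to $2\nu$ via the cover structure) gives part~\ref{connoflink:nu}, and then~\eqref{eq:linkconn} is immediate arithmetic.
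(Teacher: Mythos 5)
Your outline has the right shape (a combinatorial bound via covers and K\"onig plus a topological obstruction), but two of its three steps contain genuine errors. First, the ``easy'' combinatorial inequality goes the other way. A minimum vertex cover of $G=\link{\cH}{V_i}$ is already a vertex cover of $\cH$ (every edge of $\cH$ meets $V_i$ inside $V_i$, hence induces a link edge), so $\tau(G)\ge\tau(\cH)$ and, by K\"onig, $\nu(G)\ge\tau(\cH)=2\nu(\cH)$; this is the paper's inequality~\eqref{eq:taus} with $S=V_i$. Your claimed converse extraction --- ``from a vertex cover of size $2\nu$ of $\cH$ one extracts a vertex cover of $G$ of size $2\nu$'' --- is false: an edge of $\cH$ may be covered only by its vertex in $V_i$, and that vertex covers nothing in $G$. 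The upper bound $\nu(G)\le\tau(\cH)$ is in fact the \emph{hard} direction and in the paper it falls out only at the end, from the chain $\nu(\cH)-2=\conn(L(G))\ge\nu(G)/2-2$. Because you have this inequality backwards, your second step goes astray: applying Theorem~\ref{thm:matchconn} with $r=2$ to $G$ is \emph{not} ``too weak'' --- combined with $\nu(G)\ge 2\nu(\cH)$ it gives exactly $\conn(L(G))\ge\nu(\cH)-2$, and the detour through $i\gamma$ is unnecessary.

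Second, your third step (the upper bound $\conn(L(G))\le\nu(\cH)-2$) is the crux and is not actually proved. The paper does not build an explicit sphere; it uses Theorem~\ref{thm:deficiencyrainbowmatching} (proved via a Sperner-colored triangulation) in contrapositive: if $\conn(L(\link{\cH}{S}))\ge |S|-(|V_i|-\nu(\cH)-1)-2$ held for \emph{every} $S\subseteq V_i$, then $\cH$ would contain a matching of size $\nu(\cH)+1$. This yields only that \emph{some} $S$ satisfies $\conn(L(\link{\cH}{S}))\le\nu(\cH)-(|V_i|-|S|)-2$, and one then needs the lower bound from~\eqref{eq:taus} to show $|S|\ge|V_i|-(2\nu(\cH)-\tau(\cH))$, so that extremality forces $S=V_i$. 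Your cross-polytope plan would have to exhibit the obstruction directly in $L(\link{\cH}{V_i})$, and you give no construction: pairing up the $2\nu$ cover vertices does not obviously produce $\nu$ pairs of mutually non-adjacent edge sets in $L(G)$ (the edges at two cover vertices can intersect arbitrarily), and nothing in the hypothesis $\tau=2\nu$ visibly rules this out. As written, the proposal establishes neither bound on $\conn(L(G))$ and therefore neither part of the theorem.
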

We prove Theorem~\ref{thm:connoflink} explicitly in Section~\ref{sec:ryserforrequals3}. On the way, we give a reformulated proof of Aharoni's Theorem~\cite{aharoni}. We also mention here that  in~\cite{HNS2} we derive, as a consequence of Theorem~\ref{thm:characterization}, a sort of converse of Theorem~\ref{thm:connoflink}: \emph{every} bipartite graph which is optimal for Theorem~\ref{thm:matchconn} is the link of some Ryser-extremal $3$-graph.

In the main theorem of this paper, proven in Section~\ref{thelinkgraph}, we characterize those bipartite graphs which are extremal for Theorem~\ref{thm:matchconn} and hence we also obtain valuable structural information about the link graphs of Ryser-extremal $3$-graphs.

\begin{thm}\label{thm:bipartitegraphchar}
Let $G$ be a bipartite graph. Then $\conn(L(G)) = \frac{\nu(G)}{2} - 2$ if and only if $G$ has a collection of $\nu(G)/2$ pairwise vertex-disjoint subgraphs, each of them a $C_4$ or a $P_4$, such that every edge of $G$ is parallel to an edge of one of the $C_4$'s or is incident to an interior vertex of one of the $P_4$'s.
\end{thm}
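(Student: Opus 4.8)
The plan is to prove both directions of the characterization separately, with the ``if'' direction being a reasonably direct application of Theorem~\ref{thm:matchconn} together with known upper bounds on connectedness, and the ``only if'' direction requiring a careful structural analysis. For the ``if'' direction, suppose $G$ has the claimed collection of $\nu(G)/2$ disjoint $C_4$'s and $P_4$'s with the stated domination-type property. Each $C_4$ contributes a matching of size $2$ to $G$, and each $P_4$ also contains a matching of size $2$, so together they give a matching of size $\nu(G)$ in $G$; since these subgraphs live on $2\nu(G)$ vertices this matching is maximum, which is consistent. The lower bound $\conn(L(G)) \geq \nu(G)/2 - 2$ is immediate from Theorem~\ref{thm:matchconn}. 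For the matching upper bound $\conn(L(G)) \leq \nu(G)/2 - 2$, I would use the hypothesis to exhibit a ``small'' independent set in $L(G)$ (i.e., a matching in $G$) that cannot be dominated by few vertices of $L(G)$ (i.e., by few edges of $G$), or more directly exhibit an explicit sphere $S^{\nu(G)/2 - 1}$ in $\cI(L(G))$ that cannot be filled. The canonical construction: for each $C_4$ with its two perfect matchings $\{a_1, a_2\}$ and $\{a_1', a_2'\}$ and for each $P_4 = x$--$y$--$z$--$w$ with the choice between the single edge $yz$ and ... actually a $P_4$ only carries one matching of size $2$, namely $\{xy, zw\}$; here the role of the $P_4$ is to provide a ``cone point'' or contractible factor. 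The cleanest approach is to show $\cI(L(G))$ is homotopy equivalent to (or dominated in connectedness by) a join of $\nu(G)/2$ copies of $S^0$ from the $C_4$'s and contractible pieces from the $P_4$'s, giving connectedness exactly $\nu(G)/2 - 2$; the parallel-edge and incidence conditions are exactly what is needed to guarantee no extra independent sets spoil this.

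For the ``only if'' direction, assume $\conn(L(G)) = \nu(G)/2 - 2$ and write $n = \nu(G)/2$. I would proceed by induction on $\nu(G)$ (equivalently on $|V(G)|$ or $|E(G)|$). The base case $n = 1$, i.e., $\nu(G) = 2$ and $\conn(L(G)) = -1$, means $\cI(L(G))$ is disconnected, so $L(G)$ has an independent set (matching $\{e, f\}$ in $G$) not dominated by any single edge of $G$ meeting both $e$ and $f$; analyzing which bipartite graphs with $\nu = 2$ have disconnected line-graph independence complex should force $G$ to essentially be a $C_4$ or $P_4$ (up to parallel edges and pendant edges at interior vertices). For the inductive step, the key is to locate one $C_4$ or $P_4$ to peel off. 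I expect to use a link/deletion argument on the simplicial complex $\cI(L(G))$: pick a well-chosen edge $e$ of $G$, consider the link $\link{\cI(L(G))}{e}$ (which is $\cI(L(G'))$ for $G'$ obtained by deleting $e$ and all edges meeting it — i.e.\ deleting a vertex-cover-like neighborhood) and the deletion, and use the standard inequality relating connectedness of a complex to that of a vertex's link and deletion (if $\conn(\link{}{v}) \geq k-1$ and $\conn(\cK \setminus v) \geq k$ then $\conn(\cK) \geq k$, and a corresponding statement forcing the link to be small when $\cK$ is extremal). This should let me conclude that deleting the two ``interior'' vertices of a $P_4$, or the four vertices of a $C_4$, drops $\nu$ by exactly $2$ and keeps the remaining graph extremal, so induction applies; the parallel/incident edges get absorbed because they are irrelevant to the independence complex structure (parallel edges give identical vertices in $L(G)$, and edges at an interior vertex of a peeled $P_4$ are dominated).

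The main obstacle, as I see it, is two-fold. First, in the inductive step one must actually \emph{find} the right vertex or edge to peel off: it is not obvious a priori that an extremal $G$ contains a $P_4$ or $C_4$ whose removal preserves extremality, and one may need to first establish auxiliary structural facts (e.g.\ that $G$ has no isolated vertices beyond those forced, that every vertex of large degree forces a particular local configuration, or that the extremality condition propagates to carefully chosen induced subgraphs). This is where I would expect to need the detailed machinery of links of independence complexes — lemmas of the form ``$\conn(\cI(G)) \leq \conn(\link{\cI(G)}{v}) + 1$'' and ``$\conn(\cI(G)) \geq \min\{\conn(\cI(G \setminus N[v])) + 1, \conn(\cI(G - v))\}$'' — combined with a clever case analysis on the neighborhood structure of a maximum matching. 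Second, one must handle the bookkeeping of parallel edges and of edges incident to $P_4$-interiors consistently throughout the induction, making sure the peeled-off pieces really are vertex-disjoint and really cover all edges in the claimed sense; I would set up the induction hypothesis to carry exactly the needed invariant (perhaps phrasing it in terms of a ``core'' subgraph together with allowed decorations) so that reassembling the pieces at the end is routine.
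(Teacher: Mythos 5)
Your outline has the right general shape for the ``only if'' direction (induction plus a Meshulam-type deletion/explosion inequality, Theorem~\ref{thm:meshulam}/Corollary~\ref{cor:roy}), but the step you yourself flag as ``the main obstacle'' --- actually locating a $C_4$ or $P_4$ whose removal preserves extremality --- is precisely the content of the proof, and nothing in the proposal resolves it. Two concrete problems. First, an induction on bipartite graphs $G$ alone cannot close: the contrapositive of Theorem~\ref{thm:meshulam} forces you to repeatedly \emph{delete edges of the line graph} (decouplable edges) until every remaining edge is explodable, and the resulting complex is $\cI(J)$ for a proper subgraph $J\subseteq L(G)$, not the line graph of any smaller bipartite graph. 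This is why the paper proves the strengthened Theorem~\ref{thm:cpdecomposition} about pairs $(J,M)$ with $J\subseteq L(G)$ and $M\subseteq V(J)$, together with the notion of an $M$-reduced subgraph; your inductive hypothesis, stated only for full line graphs, does not survive the reduction. Second, even granting the right inductive statement, identifying the piece to peel off requires real machinery: the Degree Lemma (Lemma~\ref{lem:degree}, every non-matching edge of $V(J)$ sees $0$ or $2$ matching edges), the Alternating Lemma (Lemma~\ref{lem:alternating}), the reachability sets $\cP_M(m,e)$ and an extremal choice of a triple $(M_1,m,e)$ maximizing $\abs{\cP_{M_1}(m,e)}$ over \emph{all} matchings obtainable from $M$ by $C_4$-switches (using Lemma~\ref{lem:longfact} to derive a contradiction otherwise), and finally Lemma~\ref{lem:undoswitch} to transport the decomposition back to the original matching. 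None of this is suggested by ``a clever case analysis on the neighborhood structure of a maximum matching,'' and without it the inductive step does not go through.

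On the ``if'' direction: note that the paper deliberately does not prove it here (it is deferred to~\cite{HNS2}, where it uses home-base hypergraphs), so you are attempting something extra; but your sketch contains an error. If the $P_4$ pieces contributed ``contractible factors'' to a join decomposition, the whole join would be contractible and $\conn(L(G))$ would be $+\infty$, contradicting extremality. In fact the independence complex of the line graph of a bare $P_4$ is an edge plus an isolated vertex (homotopy equivalent to $S^0$, not contractible), and the real difficulty is showing that the decorations --- parallel edges on the $C_4$'s and arbitrary edges hanging off $P_4$-interior vertices --- do not raise the connectedness; your remark that they are ``exactly what is needed'' is an assertion, not an argument. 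As it stands the proposal is a plausible plan with the decisive steps missing in both directions.
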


To be precise, in this paper, we will in fact only prove the ``only if'' direction of this theorem. The other direction will be proven in~\cite{HNS2}, as it is not necessary for Theorem~\ref{thm:characterization}, and its proof makes use of the concept of home-base hypergraphs.

\subsection{Topological Tools}

The proofs of Theorems~\ref{thm:connoflink} and \ref{thm:bipartitegraphchar}, as well as the proof of Theorem~\ref{thm:characterization} (given in~\cite{HNS2}) use two tools to bound the topological connectedness of graphs.

The first one is a theorem of Meshulam~\cite{meshulam}, reformulated in a way that is particularly well-suited for our inductive arguments. Let $G$ be a graph, and let $e$ be an edge of $G$. We denote by $G - e$ the graph $G$ with the edge $e$ deleted. We denote by $G \explode e$ the graph $G$ with both endpoints of $e$ and their neighbors deleted. $G \explode e$ is called $G$ with $e$ \emph{exploded}. We will often write edges with endpoints $x$ and $y$ as $xy$.

\begin{thm} \label{thm:meshulam}
Let $G$ be a graph and let $e \in E(G)$. Then we have
\begin{equation} \label{eq:meshulamtheorem}
	\conn(G) \geq \min \lset{\conn(G - e), \conn(G \explode e) + 1}.
\end{equation}
\end{thm}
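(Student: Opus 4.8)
I would argue entirely at the level of independence complexes, writing $e=xy$ and $\sigma=\{x,y\}$, and recalling that $\conn(H)=\conn(\cI(H))$ by definition. First dispose of the degenerate case: if $x$ and $y$ are still adjacent in $G-e$ (i.e.\ $e$ is parallel to another edge), then a set is independent in $G-e$ iff it is independent in $G$, so $\cI(G-e)=\cI(G)$ and \eqref{eq:meshulamtheorem} is trivial. Hence assume $x,y$ are non-adjacent in $G-e$, so that $\sigma$ is a $1$-dimensional face of $\cI(G-e)$. Put $L:=\cI(G\explode e)$, so $\conn(L)=\conn(G\explode e)$.

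The heart of the argument is the decomposition into subcomplexes
\[
	\cI(G-e)\;=\;\cI(G)\,\cup\,\St_{\cI(G-e)}(\sigma),
\]
where $\St_{\cI(G-e)}(\sigma)=\{\tau\in\cI(G-e):\tau\cup\sigma\in\cI(G-e)\}$ is the closed star. This is a set identity: a face of $\cI(G-e)$ that omits $x$ or $y$ contains no edge of $G$ (the only edge of $G$ missing from $G-e$ is $e$), hence lies in $\cI(G)$; and any face containing $\sigma$ lies in $\St_{\cI(G-e)}(\sigma)$. I then record three facts:
\begin{enumerate}
	\item $\St_{\cI(G-e)}(\sigma)=\sigma*\link{\cI(G-e)}{\sigma}$ is a join with a non-empty simplex, hence contractible.
	\item $\link{\cI(G-e)}{\sigma}=L$: a set $\tau$ disjoint from $\sigma$ has $\tau\cup\sigma\in\cI(G-e)$ iff $\tau$ is independent in $G$ and avoids $N_G(x)\cup N_G(y)$, i.e.\ iff $\tau\in\cI(G\explode e)$.
	\item $\cI(G)\cap\St_{\cI(G-e)}(\sigma)=(\partial\sigma)*L$, which is the suspension $\Sigma L$. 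Indeed, an independent set $\eta$ of $G$ lies in the closed star of $\sigma$ iff $\eta\setminus\sigma\in L$ and $\eta\cap\sigma$ is a proper subset of $\sigma$, i.e.\ $\eta\cap\sigma\in\partial\sigma$; and $\partial\sigma$ is a copy of $S^0$. Consequently $\conn\bigl(\cI(G)\cap\St_{\cI(G-e)}(\sigma)\bigr)=\conn(L)+1=\conn(G\explode e)+1$.
\end{enumerate}

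It remains to feed this into a gluing lemma in the form: \emph{if $\cK=A\cup B$ with $A,B$ subcomplexes and $B$ contractible, then $\conn(A)\ge\min\{\conn(\cK),\conn(A\cap B)\}$.} One gets this from the long exact sequence of the pair $(\cK,A)$ together with excision and the contractibility of $B$, which give $\widetilde H_n(\cK,A)\cong\widetilde H_n(B,A\cap B)\cong\widetilde H_{n-1}(A\cap B)$; this forces $\widetilde H_n(A)=0$ for every $n\le\min\{\conn\cK,\conn(A\cap B)\}$, while simple connectivity of $A$ in the relevant range follows from van Kampen applied to $\cK=A\cup B$ (once $A\cap B$ is simply connected, the map $\pi_1(A\cap B)\to\pi_1(A)$ has trivial image, so $\pi_1(A)\cong\pi_1(\cK)$), and Hurewicz converts the homology vanishing into the connectivity statement. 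Applying this with $A=\cI(G)$, $B=\St_{\cI(G-e)}(\sigma)$, $\cK=\cI(G-e)$, and facts (1)--(3) gives
\[
	\conn(G)\;\ge\;\min\bigl\{\conn(G-e),\ \conn(G\explode e)+1\bigr\},
\]
as required.

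\textbf{Expected main obstacle.} The subtlety is the \emph{direction} of the gluing lemma: the familiar Mayer--Vietoris bound controls $\conn(A\cup B)$ from below, but here we must bound the connectivity of a \emph{piece}, $A=\cI(G)$, which is exactly why contractibility of the complementary piece $\St_{\cI(G-e)}(\sigma)$ is indispensable (it collapses Mayer--Vietoris to the excision identity above). The remaining care-points --- the van Kampen step at low connectivity, and the conventions for the empty complex and for small or edgeless $G$ --- are routine.
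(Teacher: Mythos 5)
Your argument is correct and is essentially the proof the paper has in mind: your decomposition $\cI(G-e)=\cI(G)\cup\St_{\cI(G-e)}(\sigma)$ with $\St_{\cI(G-e)}(\sigma)=e*\cI(G\explode e)$ contractible and intersection the suspension of $\cI(G\explode e)$ is exactly Meshulam's observation quoted in Section~\ref{sec:topologicalpreliminaries}, and your excision/long-exact-sequence step plus the van Kampen--Hurewicz upgrade from homological to topological connectedness is precisely the Adamaszek--Barmak modification the paper invokes. No substantive difference in route.
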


Meshulam proved a homological version of this theorem, where everywhere in the statement $\conn$ is replaced by the homological connectedness $\conn_H$. For completeness, in the Appendix we indicate how to extend Meshulam's argument using the approach of Adamaszek and Barmak~\cite{adamaszekbarmak} and obtain \eqref{eq:meshulamtheorem}. It is also possible to give a homology-free proof of Theorem~\ref{thm:meshulam} via triangulations along the lines of \cite{szabotardos} (cf~\cite{lotharsthesis}). Theorem~\ref{thm:meshulam} in this formulation but with a modified (non-topological) definition of $\conn$ was also stated in~\cite{haxell} and proved without direct reference to topology.

Our second tool is a special case of a theorem of Aharoni and Berger~\cite{aharoniberger}, which 
makes a direct connection between the size of the largest hypergraph matching and the connectedness of the link.

\begin{thm}[\cite{aharoniberger}] \label{thm:deficiencyrainbowmatching}
Let $d\geq 0$ be an integer and let $\cH$ be a $3$-uniform $3$-graph with vertex classes $V_1$, $V_2$, and $V_3$. If we have that $\conn(L(\link{\cH}{S})) \geq \abs{S} - d - 2$ for every $S \subseteq V_i$, then $\nu(\cH) \geq \abs{V_i} - d$.
\end{thm}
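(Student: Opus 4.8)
The plan is to recognize the hypothesis as a defect form of the topological Hall theorem of Aharoni and Haxell~\cite{aharonihaxell}, applied to the line graph of $\link{\cH}{V_i}$, and then to reduce this defect form to the ordinary ($d=0$) version by adjoining dummy vertices.

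First I would set up the translation. Put $G = \link{\cH}{V_i}$ and $\Gamma = L(G)$; the vertices of $\Gamma$ are the edges of $G$, counted with multiplicity. For $v \in V_i$ let $\cB_v \subseteq V(\Gamma)$ be the set of those edges of $G$ that arise from edges of $\cH$ through $v$. Since $\cH$ is $3$-partite, the sets $\cB_v$ ($v \in V_i$) partition $V(\Gamma)$, and for every $S \subseteq V_i$ the subgraph of $\Gamma$ induced on $\bigcup_{v \in S}\cB_v$ is exactly $L(\link{\cH}{S})$. Moreover a set of edges of $\cH$ is a matching precisely when its projections to $G$ are pairwise disjoint — i.e.\ form an independent set of $\Gamma$ — and lie in pairwise distinct classes $\cB_v$. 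Thus a matching of $\cH$ of size $m$ is the same thing as a partial system of disjoint representatives (SDR) of size $m$ for the family $(\cB_v)_{v \in V_i}$, and the conclusion $\nu(\cH) \geq \abs{V_i} - d$ is equivalent to the existence of such a partial SDR of size at least $\abs{V_i} - d$.

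Now recall the topological Hall theorem of Aharoni and Haxell (which can be obtained from \thmref{thm:meshulam} by induction on the number of indices): a family $(\cB_v)_{v \in V_i}$ of vertex subsets of a graph $\Gamma$ has a full SDR provided $\conn(\Gamma[\bigcup_{v \in S}\cB_v]) \geq \abs{S} - 2$ for all $S \subseteq V_i$. Our hypothesis only supplies the weaker bound $\abs{S} - d - 2$, so I would enlarge $\Gamma$ to a graph $\Gamma'$ by adding, for each $j \in \{1, \dots, d\}$, a clique on a new set $\{z_{j,v} : v \in V_i\}$, with no edges from these vertices to $V(\Gamma)$ or between different values of $j$, and set $\cB'_v = \cB_v \cup \{z_{1,v}, \dots, z_{d,v}\}$. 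For nonempty $S$, the complex $\cI(\Gamma'[\bigcup_{v \in S}\cB'_v])$ is the join of $\cI(L(\link{\cH}{S}))$ with $d$ copies of the independence complex of a clique of size $\abs{S}$; each of the latter is nonempty and hence $(-1)$-connected, and joining with a nonempty complex raises connectedness by at least $1$, so this join has connectedness at least $(\abs{S} - d - 2) + d = \abs{S} - 2$ (the case $S = \emptyset$ being trivial). Hence the ordinary topological Hall theorem applies to $(\cB'_v)_{v \in V_i}$ and produces a full SDR $(b'_v)_{v \in V_i}$. For each fixed $j$ the vertices $z_{j,v}$ form a clique, so at most one index $v$ can have $b'_v \in \{z_{j,w} : w \in V_i\}$; therefore at most $d$ of the $b'_v$ are dummies, and discarding those indices leaves a partial SDR of $(\cB_v)_{v \in V_i}$ — equivalently a matching of $\cH$ — of size at least $\abs{V_i} - d$.

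The routine part is the two invocations of the cited results; the points that need care are the translation in the second paragraph (disjointness of the projected edges together with distinct-class membership, rather than adjacency in $L(\cH)$, is what encodes a matching of $\cH$) and calibrating the number of dummy cliques so that the connectedness hypothesis is lifted from $\abs{S} - d - 2$ back up to exactly $\abs{S} - 2$.
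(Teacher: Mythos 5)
Your proof is correct and is in essence the paper's own argument: the paper establishes the deficiency statement (Theorem~\ref{thm:deficiencyrainbowsimplex}) by induction on $d$, at each step joining the complex with a new discrete layer containing one dummy vertex of each color --- exactly your $d$ dummy cliques, unwound --- using Proposition~\ref{prop:joinconn} to lift the connectedness hypothesis to the $d=0$ level and observing that any simplex meets each layer in at most one vertex, so at most $d$ dummies need to be discarded. The only divergence is that you invoke the $d=0$ topological Hall theorem of Aharoni--Haxell as known while the paper reproves it via a Sperner-colored triangulation; since the paper itself notes that the $d=0$ case is implicit in~\cite{aharonihaxell}, this is immaterial.
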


Since the statement of this theorem in \cite{aharoniberger} appears in a different 
terminology, we reproduce an argument using Sperner's Lemma in the Appendix.

\subsection{Extremal Problems with Many Extremal Structures}

In a typical extremal combinatorial problem, the greater the number of extremal configurations,  the less likely that a purely combinatorial argument will lead to a solution, since a proof eventually must consider all extremal structures, at least implicitly.  For our characterization problem, the number of extremal structures is infinite for every fixed value of the benchmark parameter. This is one of the few interesting cases in which the full characterization of the extremal structures of an extremal combinatorial problem with infinitely many extrema is known.

Sometimes, the difficulties posed by multiple extremal examples can be mitigated by realizing that the combinatorial problem, or rather its extremal structures, hide the features and concepts of another mathematical discipline in the background. 
In such cases, the simplest, or most efficient descriptions of extremal structures are not necessarily combinatorial, but might have to be formulated in another language, which could be algebraic, probabilistic, or, as in the present paper, topological. 

A simple example of a combinatorial problem of this sort is the famous Oddtown Theorem of 
Berlekamp~\cite{berlekamp}, where the right language is the one of linear algebra.
Even though the number of extremal set-systems is 
superexponential~\cite[Exercise 1.1.14]{babai-frankl} and the feasibility of their combinatorial characterization is questionable at best, 
they have a very simple 
linear algebraic description as the  orthogonal bases in $\mathbb{F}_2^n$.

Another prominent example is the extremal problem known as Sidorenko's 
Conjecture~\cite{simonovits, sidorenko}, where the appropriate setting is analytic.
The conjectured extremal examples are all quasirandom graphs of a certain density,
hence it is also plausible to expect that there are many 
combinatorially different extremal or close to extremal constructions, and
their combinatorial characterization seems out of reach. 
Yet in analytic language, where graphs are interpreted as symmetric measurable functions on the unit square (called \emph{graphons}), 
they have a simple description as the constant function corresponding to the density.


Aharoni~\cite{aharoni} invoked topological considerations to prove Ryser's Conjecture for $3$-graphs and hence overcame the combinatorial difficulty of having infinitely many extremal structures. 
Our main tasks, the characterization of the extremal $3$-graphs for Ryser's Conjecture (in \cite{HNS2}) and their link-graphs (in the present paper), go a step further in this direction.
The Ryser-extremal hypergraphs are exactly those that satisfy the 
topological condition~(\ref{eq:linkconn}) in
Theorem~\ref{thm:connoflink}. This translation facilitates the 
combinatorial description of the links and eventually of all Ryser-extremal $3$-graphs.

\subsection{The Structure of the Paper}

In the Appendix we collect  background material, reformulated as statements that 
are convenient for our purposes.  
There we give a proof of
Theorem~\ref{thm:deficiencyrainbowmatching}, 
discuss Theorem~\ref{thm:meshulam}, and include
an argument to derive Theorem~\ref{thm:matchconn} from it.
In Section~\ref{sec:ryserforrequals3} we give the proof of Theorem~\ref{thm:connoflink}.

In Section~\ref{thelinkgraph} we prove our main theorem, Theorem~\ref{thm:bipartitegraphchar}. We show that those bipartite graphs whose line graphs are optimal for Theorem~\ref{thm:matchconn} must have a certain form, which we call a CP-decomposition. We show a slightly more general statement involving any \emph{subgraph} of the line graph of a bipartite graph. The precise definition of CP-decomposition in this general setup is given in Section~\ref{thelinkgraph}.

In Section~\ref{sec:goodsets} we prove a theorem that will be crucial for our proof of Theorem~\ref{thm:characterization} in \cite{HNS2}. We define the notion of good sets. Good sets will turn out to be very useful to have in one of the link graphs of a Ryser-extremal $3$-graph. In the main theorem of Section~\ref{sec:goodsets} we show that the lack of good sets in a bipartite graph imposes very strong restrictions on its structure. The proof of this theorem is included in this paper because it uses several of the technical definitions and lemmas introduced for the proof of our main theorem in Section~\ref{thelinkgraph}. 

In the final section we collect several remarks and open problems.

\section{Connectedness of the Link Graph} \label{sec:ryserforrequals3}

In this section we prove Theorem~\ref{thm:connoflink}, which states that the link graph of any Ryser-extremal $3$-graph minimizes  the connectedness of the independence complex of its line graph. On the way we give a reformulated proof of Aharoni's Theorem.

Let $\cH$ be a $3$-partite $3$-graph with vertex classes $V_1$, $V_2$, and $V_3$. We aim to show that $\tau(\cH) \leq 2\nu(\cH)$. To do this, we will consider the link graph (recall Definition~\ref{def:linkg}). We will use the vertex cover number of $\cH$ to find a lower bound on the connectedness of the line graphs of the link graphs, and we will use the matching number of $\cH$ to find an upper bound for at least one link. Combining these bounds will yield the desired inequality $\tau(\cH) \leq 2\nu(\cH)$. 

\begin{prop} \label{prop:linkconn}
Let $\cH$ be a $3$-partite $3$-graph with vertex classes $V_1$, $V_2$, and $V_3$. 
Then for each $i \in \lset{1, 2, 3}$ we have the following:
\begin{enumerate}
	\renewcommand{\theenumi}{(\roman{enumi})}
	\renewcommand{\labelenumi}{\theenumi}
	\item \label{linkconn:lower} For all $S \subseteq V_i$ we have
		\[
			\conn(L(\link{\cH}{S})) \geq \frac{\tau(\cH) - (\abs{V_i} - \abs{S})}{2} - 2.
		\]
	\item \label{linkconn:upper} If $\nu({\cal H}) < |V_i|$, then  
	there is some $S \subseteq V_i$ such that
		\[
			\conn(L(\link{\cH}{S})) \leq \nu(\cH) - (\abs{V_i} - \abs{S}) - 2.
		\]
	\item \label{linkconn:size} If $\nu({\cal H}) < |V_i|$, then for every $S \subseteq V_i$ 
	for which the inequality in \ref{linkconn:upper} holds we have
		\[
			\abs{S} \geq \abs{V_i} - (2\nu(\cH) - \tau(\cH)).
		\]
\end{enumerate}
\end{prop}

\begin{proof}
Let $S \subseteq V_i$. We want to show that the line graph $L(\link{\cH}{S})$ has sufficiently high connectedness. We construct a vertex cover $T_S$ of $\cH$ by taking the vertices in $V_i \setminus S$ and a minimum vertex cover of $\link{\cH}{S}$. This is clearly a vertex cover of $\cH$ because any edge not incident to $S$ intersects $V_i \setminus S$ and any edge incident to $S$ induces an edge in the link of $S$, and hence intersects the vertex cover of the link. We have $\abs{T_S} = \abs{V_i} - \abs{S} + \tau(\link{\cH}{S})$, and since this is a vertex cover, we thus have
\begin{equation} \label{eq:taus}
	\abs{V_i} - \abs{S} + \tau(\link{\cH}{S}) \geq \tau(\cH)
\end{equation}
for all subsets $S \subseteq V_i$. By K\"onig's Theorem, we have $\tau(\link{\cH}{S}) = \nu(\link{\cH}{S})$. We therefore have a lower bound on the matching number of the link graph, and so by Theorem~\ref{thm:matchconn}, we have
\[
	\conn(L(\link{\cH}{S}) \geq \frac{\nu(\link{\cH}{S})}{2} - 2 \geq \frac{\tau(\cH) - (\abs{V_i} - \abs{S})}{2} - 2,
\]
which is the inequality in statement~\ref{linkconn:lower}.

Now we want to show that the inequality in statement~\ref{linkconn:upper} holds for some 
$S$. Suppose to the contrary that for every $S \subseteq V_i$ we had 
$\conn(L(\link{\cH}{S})) \geq \nu(\cH) - (\abs{V_i} - \abs{S}) - 1 
=  \abs{S} - (\abs{V_i} - \nu(\cH) - 1) - 2$. 
Then we can apply Theorem~\ref{thm:deficiencyrainbowmatching} with $d = \abs{V_i} - \nu(\cH) - 1\geq 0$ to get that $\nu({\cal H}) \geq \abs{V_i} - (\abs{V_i} - \nu(\cH) - 1) = \nu(\cH) + 1$, which is a contradiction. Thus some $S \subseteq V_i$ must indeed satisfy the inequality in~\ref{linkconn:upper}.

Now consider such an $S$. Combining the inequalities in~\ref{linkconn:lower} and~\ref{linkconn:upper}, we get
\[
	\frac{\tau(\cH) - (\abs{V_i} - \abs{S})}{2} - 2 \leq \nu(\cH) - (\abs{V_i} - \abs{S}) - 2,
\]
from which the inequality in~\ref{linkconn:size} follows after some rearranging.
\end{proof}

Note that Aharoni's Theorem follows in one line from the above proposition: 
there is an $S \subseteq V_i$ such that $\abs{S} \geq \abs{V_i} - (2\nu(\cH) - \tau(\cH))$, and hence
\[
	\tau(\cH) + \abs{V_i} - \abs{S} \leq 2\nu(\cH).
\]
Since $\abs{V_i} \geq \abs{S}$, we thus have $\tau(\cH) \leq 2\nu(\cH)$ as desired.

We use Proposition~\ref{prop:linkconn} to derive the main theorem of this section.

\begin{proof}[Proof of Theorem~\ref{thm:connoflink}]
Applying Proposition~\ref{prop:linkconn} to $\cH$, we see by~\ref{linkconn:size} that in~\ref{linkconn:upper} equality holds if and only if $S = V_i$ for some $i$. Combining the inequalities in~\ref{linkconn:lower} and~\ref{linkconn:upper} for $S = V_i$ with the fact that $\tau(\cH) = 2\nu(\cH)$ immediately gives that $\conn(L(\link{\cH}{V_i})) = \nu(\cH) - 2$, showing part~\ref{connoflink:conn} of Theorem~\ref{thm:connoflink}. This gives the following chain of inequalities:
\begin{align*}
	\frac{\tau(\cH)}{2} - 2 &= \nu(\cH) - 2 = \conn(L(\link{\cH}{V_i})) \\
	&\geq \frac{\nu(\link{\cH}{V_i})}{2} - 2 = \frac{\tau(\link{\cH}{V_i})}{2} - 2 \\
	&\geq \frac{\tau(\cH)}{2} - 2,
\end{align*}
where the first inequality is valid because of Theorem~\ref{thm:matchconn}, the equality following it is K\"onig's Theorem, and the last inequality is just equation~\eqref{eq:taus} for $S = V_i$. It follows that every inequality is actually an equality, from which part~\ref{connoflink:nu} of Theorem~\ref{thm:connoflink} follows.

From parts~\ref{connoflink:conn},~\ref{connoflink:nu}, and  the fact that $\nu(\cH) = \frac{\tau(\cH)}{2}$, it follows that the link graphs $\link{\cH}{V_i}$ of a Ryser-extremal $3$-graph $\cH$ must be extremal for Theorem~\ref{thm:matchconn}:
\[
	\conn(L(\link{\cH}{V_i})) = \frac{\nu(\link{\cH}{V_i})}{2} - 2.
\]
\end{proof}

\section{The Characterization Theorem} \label{thelinkgraph}

In the main theorem of this section we fully characterize those bipartite graphs for which the connectedness of the line graph is as small as possible, that is, it is equal to two less than half its matching number.

For the proof we need to choose our definitions very subtly and in order to make the induction work, we need to consider a carefully formulated more general statement involving arbitrary \emph{subgraphs} of the line graphs.

\begin{defn}
Let $G$ be a bipartite graph, and let $J \subseteq L(G)$ be a subgraph of its line graph. Two edges of $G$ are called \emph{$J$-adjacent} if they are connected by an edge in $J$, and otherwise \emph{$J$-nonadjacent}. An edge $e \in V(J)$ is \emph{at home} in a subgraph $T \subseteq G$ if $T$ is a path on $4$ vertices, $e$ intersects $T$ in an interior vertex, and $e$ is $J$-adjacent to some edge of $T$.
\end{defn}

\begin{defn}
Let $k \in \N$, let $G$ be a bipartite graph, let $J \subseteq L(G)$ be a subgraph of its line graph, and let $M \subseteq V(J)$ be a matching in $G$ of size $2k$. A \emph{CP-decomposition of $J$ with respect to $M$} is a set of $k$ vertex-disjoint subgraphs $S_1, \dots, S_s, T_1, \dots, T_t$ of $G$ such that
\begin{enumerate}
	\renewcommand{\theenumi}{(\arabic{enumi})}
	\renewcommand{\labelenumi}{\theenumi}
	\item \label{cp:cycles} Each $S_i$ is isomorphic to $C_4$ (a cycle on $4$ vertices), contains two edges of $M$, and every two intersecting edges are $J$-adjacent.
	\item \label{cp:paths} Each $T_j$ is isomorphic to $P_4$ (a path on $4$ vertices), contains two edges of $M$, and every two intersecting edges are $J$-adjacent.
	\item \label{cp:edges} Every edge in $V(J)$ is equal to or parallel to an edge of some $S_i$, or is at home in some $T_j$.
\end{enumerate}
\end{defn}

We call $k = \abs{M}/2$ the \emph{order} of the CP-decomposition. Observe for property~\ref{cp:edges} that the edges of any of the subgraphs $T_j$ are themselves at home in $T_j$ by definition. 

\begin{thm}[CP-Decomposition Theorem] \label{thm:cpdecomposition}
Let $G$ be a bipartite graph, let $J \subseteq L(G)$ be a subgraph of its line graph, and let $M \subseteq V(J)$ be a matching in $G$. If $\conn(J) \leq \frac{\abs{M}}{2} - 2$, then $J$ has a CP-decomposition with respect to $M$.
\end{thm}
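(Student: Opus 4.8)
The plan is to prove the CP-Decomposition Theorem by induction on $\abs{E(J)}$, combined with an inner induction or case analysis governed by Theorem~\ref{thm:meshulam}. The hypothesis $\conn(J)\le\frac{\abs{M}}{2}-2$ together with the lower bound of Theorem~\ref{thm:matchconn} (applied to $J\subseteq L(G)$ with the matching $M$) forces $\conn(J)=\frac{\abs{M}}{2}-2$ exactly; this equality is the real engine of the argument, since it means $J$ is extremal and every inequality in sight must be tight. First I would dispose of trivial and degenerate cases: if $J$ has an isolated vertex then $\conn(J)=\infty$, contradicting extremality, so every vertex of $J$ has a $J$-neighbor; and if $M=\emptyset$ the empty decomposition works. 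So assume $\abs{M}=2k\ge 2$.

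Now pick an edge $m\in M\subseteq V(J)$ and a $J$-neighbor $e$ of $m$, and apply Theorem~\ref{thm:meshulam} to the edge $me$ of $J$: $\conn(J)\ge\min\{\conn(J-me),\ \conn(J\explode me)+1\}$. The key point is that by the computation in the proof of Theorem~\ref{thm:matchconn} we have $\conn(J-me)\ge\frac{\abs{M}}{2}-2$ (since $M\subseteq V(J-me)$) and $\conn(J\explode me)\ge\frac{\abs{M}-4}{2}-2=\frac{\abs{M}}{2}-4$ — here the ``$-4$'' rather than ``$-2r=-4$'' uses $r=2$, so $e$ meets at most two edges of $M$. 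Since $\conn(J)=\frac{\abs{M}}{2}-2$ is as small as Theorem~\ref{thm:meshulam} permits, one of the two terms on the right must equal $\frac{\abs{M}}{2}-2$, i.e. either $\conn(J-me)=\frac{\abs{M}}{2}-2$ (so $J-me$ is again extremal for $M$, and induction on $\abs{E(J)}$ supplies a CP-decomposition of $J-me$ w.r.t.\ $M$), or $\conn(J\explode me)=\frac{\abs{M}}{2}-3$. In the latter case one wants to choose $e$ so that $J\explode me$ still contains a sub-matching $M'$ of size $\abs{M}-2$ (achievable when $e$ hits exactly one edge of $M$, namely $m$ itself, which one should be able to arrange by choosing $e$ carefully among the $J$-neighbors of $m$, or else by choosing $m$ and its partner edge in $M$ appropriately); then $J\explode me$ is extremal for $M'$ and induction gives a CP-decomposition of order $k-1$, to which we must attach one more $C_4$ or $P_4$ built around $m$, $e$, and the partner of $m$ in $M$.

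The bulk of the work — and the main obstacle — is the bookkeeping to pass a CP-decomposition of $J-me$ (resp.\ $J\explode me$) back up to one of $J$. In the $J-me$ case, the subgraphs $S_i,T_j$ are unchanged, so only property~\ref{cp:edges} and the $J$-adjacency clauses~\ref{cp:cycles},~\ref{cp:paths} could fail for the one restored $J$-edge $me$: we must check that $m$ and $e$ (as edges of $G$) are still equal/parallel to an edge of some $S_i$ or at home in some $T_j$, and that restoring the adjacency $me$ does not break any clause (it can only help). This may require first observing that $m$, being in $M$, sits in one of the blocks, and then arguing about where $e$ can be; some sub-case analysis on whether $e$ is parallel into an $S_i$, at home in a $T_j$, or forces a $P_4$ to be upgraded seems unavoidable. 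In the explode case, one deletes the closed $J$-neighborhoods of $m$ and $e$, so one must verify that the deleted vertices of $G$ together with $m$'s $M$-partner can be organized into a fresh $C_4$ or $P_4$ satisfying~\ref{cp:cycles} or~\ref{cp:paths}, and that every $J$-edge destroyed by the explosion is accounted for by being incident to an interior vertex of that new $P_4$ (which is exactly what ``exploded'' deletes). I expect the delicate points to be: (a) the correct choice of the edge $e$ (and possibly of $m$) to make the matching-size and block-construction work simultaneously, and (b) verifying clause~\ref{cp:edges} for edges of $J$ that are neither in $M$ nor adjacent to $me$ — these survive into $J-me$ or $J\explode me$ and are handled by the inductive decomposition, but one must make sure the new block does not interfere. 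These are the standard hazards of an extremal-structure induction driven by a Meshulam-type inequality, and carrying them out carefully with the definitions of ``at home'' and CP-decomposition is where the real content lies.
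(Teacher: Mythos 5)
Your plan correctly locates the engine of the argument (Theorem~\ref{thm:meshulam} forcing either a decouplable or an explodable edge at $me$), and you are right that the ``delete'' branch is essentially free: a CP-decomposition of $J-me$ with respect to $M$ is immediately a CP-decomposition of $J$, because property~\ref{cp:edges} and the $J$-adjacency requirements in~\ref{cp:cycles} and~\ref{cp:paths} are monotone under adding $J$-edges. The paper formalizes this by first iteratively deleting \emph{all} decouplable edges to pass to an $M$-reduced subgraph, and then inducts on $k=\abs{M}/2$ rather than on $\abs{E(J)}$; this is a cosmetic difference.

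The genuine gap is in the explode branch, and it is not the matching-size bookkeeping you worry about (after you prove the Degree Lemma, the other endpoint of $e$ in $J$ is automatically a second $M$-edge $m'$, so $M'=M\setminus\lset{m,m'}$ has size $\abs{M}-2$ for free --- your aside about arranging ``$e$ hits exactly one edge of $M$'' is actually impossible and unnecessary). The real obstruction is property~\ref{cp:edges} for the edges destroyed by the explosion. Exploding $me$ removes every $J$-neighbor of $m$, including any edge $g$ meeting $m$ at the vertex \emph{opposite} $m\cap e$. If the new block is the $P_4$ on $m,e,m'$, that opposite vertex is an endpoint of the $P_4$, so $g$ is not at home there, and it is also gone from $J\explode me$, so the inductive decomposition does not cover it either. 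Making the explode step close up therefore forces a case split: either $m$ has no $J$-neighbor on its far side (then the $P_4$ works --- this is the paper's Case~1), or $m$ has neighbors on both sides and you must instead manufacture a $C_4$ block $m,e,m',f$. But to get a $C_4$ you must show that the second $M$-edge $J$-adjacent to $f$ is again $m'$, and this is false for a generic choice of $m$ and $e$. The paper spends the bulk of the section on exactly this: it introduces $\cP_M(m,e)$ (the $M$-edges reachable by $M$-alternating paths starting $m,e$), proves the Alternating Lemma and Lemma~\ref{lem:longfact} to control such paths in an $M$-reduced $J$, closes the family of matchings under $C_4$-switches (Lemmas~\ref{lem:c4switch} and~\ref{lem:undoswitch}) so that a maximizer of $\abs{\cP_{M_1}(m,e)}$ exists, and uses that extremal choice to force the $C_4$ to close. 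Even then one more dichotomy is needed depending on whether that $C_4$ and its parallels form a whole component of $J$. Your proposal names these hazards as ``delicate points'' but supplies no mechanism to resolve them; that mechanism is the content of the theorem, so as written the proposal is an outline of the strategy rather than a proof.
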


Note that by Theorem~\ref{thm:matchconn} we must have that $\conn(J) = \frac{\abs{M}}{2} - 2$, so $\abs{M}$ is even and $V(J)$ does not contain a larger matching than $M$.

First we spell out the special case when $J$ is the entire line graph and prove Theorem~\ref{thm:bipartitegraphchar}. This will provide a characterization of those bipartite graphs $G$ whose line graphs have connectedness as small as possible in terms of the matching number of $G$.

\begin{proof}[Proof of Theorem~\ref{thm:bipartitegraphchar}]
Suppose $\conn(L(G)) = \frac{\nu(G)}{2} - 2$. Then by Theorem~\ref{thm:cpdecomposition}, $L(G)$ has a CP-decomposition, which is a collection of $\nu(G)/2$ pairwise vertex-disjoint subgraphs, each of them a $C_4$ or a $P_4$, such that every edge of $G$ is either an edge of one of the $C_4$'s or is incident to an interior vertex of one of the $P_4$'s.

As it was mentioned in the introduction, the converse of this statement is not used at all in our argument. We include it only to provide a full characterization of the extremal graphs. It will be proven in our follow-up paper~\cite{HNS2}, since the proof uses the concept of home-base hypergraph which is the central concept of that paper.
\end{proof}

The proof of Theorem~\ref{thm:cpdecomposition} is quite involved and will take up the next two subsections. We start with some auxiliary lemmas.

\subsection{Lemmas on $M$-reduced Subgraphs}

For the proof of Theorem~\ref{thm:cpdecomposition} and later we will often use Theorem~\ref{thm:meshulam} in its contrapositive form, which we state here as a corollary.

\begin{cor} \label{cor:roy}
Let $H$ be a graph, let $e \in E(H)$, and let $k \in \N$. If $\conn(H) \leq k$, then either $\conn(H - e) \leq k$ or $\conn(H \explode e) \leq k - 1$.
\end{cor}

In light of Corollary~\ref{cor:roy}, the following definitions will be useful.

\begin{defn}
An edge $e \in E(H)$ is called \emph{decouplable} if $\conn(H - e) \leq \conn(H)$, and \emph{explodable} if $\conn(H \explode e) \leq \conn(H) - 1$.
\end{defn}

By Corollary~\ref{cor:roy} every edge is either decouplable or explodable (or both). In the grand plan of our proof of the CP-decomposition theorem we intend to delete edges of $J\subseteq L(G)$ iteratively until there are no decouplable edges left and hence all edges are explodable (and then we explode one, hence decreasing the connectedness). Crucially, deleting decouplable edges does not increase the connectedness. This explains the following key definition of this subsection.

\begin{defn} \label{def:mreduced}
Let $G$ be a bipartite graph and let $M \subseteq E(G)$ be a matching of it. A subgraph $J \subseteq L(G)$ of the line graph is called \emph{$M$-reduced} if 
\begin{enumerate}
	\renewcommand{\theenumi}{(\arabic{enumi})}
	\renewcommand{\labelenumi}{\theenumi}
	\item $M \subseteq V(J)$, 
	\item $\conn(J) \leq \frac{\abs{M}}{2} - 2$, and 
	\item no edge $ef \in E(J)$ is decouplable.
\end{enumerate}
\end{defn}

Again, note that by Theorem~\ref{thm:matchconn}, if $J$ is $M$-reduced, then $\conn(J) = \frac{\abs{M}}{2} - 2$ and hence $M$ must have an even number of edges.

It will be important to note that if $J$ is $M$-reduced, then $J$ is also $M'$-reduced for any matching $M' \subseteq V(J)$ with $\abs{M'} = \abs{M}$. In particular, if we replace edges of $M$ by parallel edges in $V(J)$, these must share any properties we can deduce for the original edges.

\begin{assm}
For the remainder of the section let $G$ be a bipartite graph, let $M \subseteq E(G)$ be a matching of size $2k$ in $G$, and let $J \subseteq L(G)$ be an $M$-reduced subgraph of the line graph.
\end{assm}

\begin{lem}[Degree Lemma] \label{lem:degree}
For every edge $e \in V(J) \setminus M$ either no edge of $M$ is $J$-adjacent to $e$ or two edges of $M$ are $J$-adjacent to $e$. In particular, if $e$ is parallel to an edge of $M$, then it is not $J$-adjacent to that edge.
\end{lem}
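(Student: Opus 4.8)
The plan is to argue by contradiction, using the $M$-reducedness of $J$ — in particular property (3), that no edge of $J$ is decouplable, so every edge of $J$ is explodable. Fix an edge $e \in V(J) \setminus M$ and suppose it is $J$-adjacent to exactly one edge $m \in M$. Since $e$ and $m$ are both in $V(J)$ and $em \in E(J)$, the edge $em$ of $J$ is explodable, i.e. $\conn(J \explode em) \leq \conn(J) - 1 = \frac{\abs{M}}{2} - 3$. The point is that exploding $em$ removes from $V(J)$ precisely the edges of $G$ that are $J$-adjacent to $e$ or to $m$. By hypothesis, the only edge of $M$ adjacent to $e$ in $J$ is $m$ itself, and no edge of $M$ other than $m$ is $J$-adjacent to $m$ (here we use that $M$ is a matching in $G$, so distinct edges of $M$ are vertex-disjoint, hence $J$-nonadjacent — wait, this needs care, since $J$ can have fewer edges than $L(G)$, but $J$-adjacency implies actual adjacency in $G$, so disjoint edges are automatically $J$-nonadjacent). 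Therefore $J \explode em$ still contains the matching $M \setminus \{m\}$ of size $\abs{M} - 1$ inside its vertex set. Strictly we want an even-sized matching: $M \setminus \{m, m'\}$ for some other $m' \in M$ has size $\abs{M} - 2$, and $V(J \explode em) \supseteq M \setminus \{m\} \supseteq M \setminus \{m, m'\}$.

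Now apply Theorem~\ref{thm:matchconn} (the Aharoni--Haxell bound) to the graph $J \explode em$ and the matching $M \setminus \{m\}$ it contains: this gives $\conn(J \explode em) \geq \frac{\abs{M} - 1}{2} - 2$. But explodability gave $\conn(J \explode em) \leq \frac{\abs{M}}{2} - 3 = \frac{\abs{M} - 2}{2} - 2 < \frac{\abs{M} - 1}{2} - 2$, a contradiction. (If one prefers an even matching, use $M \setminus \{m, m'\}$ of size $\abs{M} - 2$, giving $\conn(J \explode em) \geq \frac{\abs{M} - 2}{2} - 2 = \frac{\abs{M}}{2} - 3$, which together with explodability forces $\conn(J \explode em) = \frac{\abs{M}}{2} - 3$ and is consistent — so the odd-matching version with the strict inequality is the cleaner route, but one should double-check the arithmetic. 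In fact $\frac{\abs{M}-1}{2} - 2$ need not be an integer; since connectedness is an integer this rounds up to $\lceil \frac{\abs{M}-1}{2}\rceil - 2 = \frac{\abs{M}}{2} - 2$ when $\abs{M}$ is even, giving $\conn(J\explode em) \geq \frac{\abs{M}}{2} - 2 > \frac{\abs{M}}{2} - 3$, the desired contradiction.) It remains to rule out that $e$ is $J$-adjacent to three or more edges of $M$; but again, since $M$ is a matching and $e$ is a single edge of the bipartite graph $G$ meeting each vertex class in one vertex, $e$ shares a vertex with at most two edges of $M$ (one through each endpoint of $e$), and $J$-adjacency implies sharing a vertex in $G$. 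So $e$ can be $J$-adjacent to at most two edges of $M$, and the contradiction above shows it cannot be adjacent to exactly one; hence it is adjacent to zero or two, as claimed.

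For the ``in particular'' statement: if $e$ is parallel to an edge $m \in M$, then $e$ shares both its endpoints with $m$, hence shares a vertex with no other edge of $M$ (as $M$ is a matching); so the only edge of $M$ that $e$ could possibly be $J$-adjacent to is $m$ itself. By the first part, $e$ is $J$-adjacent to either zero or two edges of $M$ — but it can be adjacent to at most the single edge $m$ — so it must be adjacent to zero, i.e. $e$ is \emph{not} $J$-adjacent to $m$.

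The main obstacle I anticipate is the bookkeeping around what ``explode'' removes in the \emph{subgraph} $J$ rather than the full line graph $L(G)$: one must be careful that $J \explode em$ denotes deleting, from the graph $J$, both endpoints of the edge $em \in E(J)$ together with all their $J$-neighbors, and that the surviving vertex set still contains the relevant sub-matching of $M$. The key facts making this work are that (i) $J$-adjacency implies genuine vertex-sharing in $G$, so vertex-disjoint edges of $G$ are automatically $J$-nonadjacent and hence survive exploding, and (ii) a single edge $e$ of $G$ meets at most two members of the matching $M$. The only other subtlety is the integrality/parity arithmetic in comparing $\frac{\abs{M}-1}{2} - 2$ with $\frac{\abs{M}}{2} - 3$, which is why isolating the contradiction is cleanest when $\abs{M}$ is even (as guaranteed by $M$-reducedness via Theorem~\ref{thm:matchconn}).
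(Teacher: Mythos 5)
Your proposal is correct and follows essentially the same route as the paper: use $M$-reducedness to conclude that $me$ is explodable, observe that $M\setminus\{m\}$ survives the explosion because $M$ is a matching and $e$ meets no other $M$-edge in $J$, and then contradict $\conn(J\explode me)\leq \frac{\abs{M}}{2}-3$ via Theorem~\ref{thm:matchconn} applied to the surviving matching of size $\abs{M}-1$. The integrality worry is unnecessary: $\frac{\abs{M}-1}{2}-2 > \frac{\abs{M}}{2}-3$ already gives the contradiction directly, which is exactly how the paper concludes.
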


\begin{proof}
Since $J$ is $M$-reduced, we have $\conn(J) \leq k - 2$. Clearly an edge can be $J$-adjacent to at most two edges of $M$ because $M$ is a matching in $G$ and $J \subseteq L(G)$. Suppose for the sake of contradiction that some edge $e \in V(J)$ is $J$-adjacent to $m \in M$, but not $J$-adjacent to any other edge of $M$. Since $me \in E(J)$ and $J$ is $M$-reduced, by Corollary~\ref{cor:roy}, upon exploding $me$ we have $\conn(J') \leq k - 3$ for $J' = J \explode me$. Since $e$ is $J$-adjacent to only one edge from $M$, the explosion keeps $M \setminus \lset{m}$ in $J'$, so $J'$ still contains a matching of size $2k - 1$. Then by Theorem~\ref{thm:matchconn} we have $\conn(J') \geq \frac{2k - 1}{2} - 2 > k - 3$, which is a contradiction. Thus every edge in $V(J)$ is $J$-adjacent to either two edges of $M$ or no edge of $M$.
\end{proof}

\begin{cor} \label{cor:c4adjacencies}
Let $x$, $y$, $x'$, and $y'\in V(G)$ be the vertices of a $C_4$ such that $xy, x'y' \in M$, and $xy', x'y \in V(J)$. Then for every $zy \in V(J)$ with $z \in V(G) \setminus \lset{x, x'}$ we have that $zy$ is $J$-adjacent to $xy$ if and only if it is $J$-adjacent to $x'y$.
\end{cor}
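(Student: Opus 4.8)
The plan is to derive both implications from \lemref{lem:degree} (the Degree Lemma), applied twice: first to the matching $M$ itself, and then to the ``rotated'' matching $M'$ obtained by replacing the two edges $xy,x'y'$ of the $C_4$ that lie in $M$ by the two edges $xy',x'y$ of the $C_4$ that lie in $V(J)$. The idea is that in these two matchings the edge incident to $y$ is $xy$ and then $x'y$ respectively, while the matching edge (if any) incident to the other endpoint $z$ of $zy$ is unchanged, so each of the two $J$-adjacencies in question gets pinned to one and the same fact.

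First a few preliminary observations. Since $zy$ is an edge of $G$ and $G$ is bipartite, $z$ lies in the vertex class containing $x$ and $x'$; in particular $z\notin\{y,y'\}$, and together with $z\notin\{x,x'\}$ this shows $zy\notin M$ (the only edge of $M$ meeting $y$ is $xy$) and that $zy$ differs from each of $xy,x'y',xy',x'y$. Let $m$ denote the unique edge of $M$ incident to $z$, should one exist; its other endpoint lies in the vertex class of $y$ and can be neither $y$ (else $m=zy\notin M$) nor $y'$ (else $m=zy'$, forcing $m=x'y'$ and $z=x'$), so $m$ is vertex-disjoint from $\{x,x',y,y'\}$. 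Since any matching edge $J$-adjacent to $zy$ must share a vertex with $zy$, the only edges of $M$ that can be $J$-adjacent to $zy$ are $xy$ and (if it exists) $m$.

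Applying the Degree Lemma to the edge $zy\in V(J)\setminus M$, the number of edges of $M$ that are $J$-adjacent to $zy$ is $0$ or $2$; since these edges lie in $\{xy,m\}$, this gives that $zy$ is $J$-adjacent to $xy$ if and only if $m$ exists and $zy$ is $J$-adjacent to $m$. Now set $M'=(M\setminus\{xy,x'y'\})\cup\{xy',x'y\}$. One checks that $M'$ is again a matching of $G$ with $\abs{M'}=\abs{M}=2k$ and $M'\subseteq V(J)$: the vertices $x,y,x',y'$ appear in $M$ only in the edges $xy$ and $x'y'$, so deleting these and inserting the vertex-disjoint edges $xy',x'y$ leaves a matching; both inserted edges lie in $V(J)$ by hypothesis, and neither already lay in $M$. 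Hence, as noted after Definition~\ref{def:mreduced}, $J$ is also $M'$-reduced, so the Degree Lemma applies with $M'$ in place of $M$. The edge $m$, if it exists, still belongs to $M'$ (being disjoint from $\{x,x',y,y'\}$) and is still the unique edge of $M'$ at $z$, while $x'y$ is now the unique edge of $M'$ at $y$; running the same argument with $M'$ shows that $zy$ is $J$-adjacent to $x'y$ if and only if $m$ exists and $zy$ is $J$-adjacent to $m$. Combining the two equivalences shows that $zy$ is $J$-adjacent to $xy$ if and only if it is $J$-adjacent to $x'y$, as required.

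I expect the only real care is in the bookkeeping around $M'$: confirming that it is a matching of the same size sitting inside $V(J)$ (so that the remark after Definition~\ref{def:mreduced} may be invoked), and that the candidate matching-neighbours of $zy$ are exactly $\{xy,m\}$ and then $\{x'y,m\}$ --- it is precisely this last point that uses the bipartiteness of $G$, to exclude $z=y'$. Beyond the two applications of the Degree Lemma nothing further is needed.
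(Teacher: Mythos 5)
Your proof is correct and takes essentially the same route as the paper: both apply the Degree Lemma twice, once to $M$ and once to the switched matching $M^\times = M \cup \{xy',x'y\} \setminus \{xy,x'y'\}$, using the fact that the only candidate $M$-edges (resp.\ $M^\times$-edges) $J$-adjacent to $zy$ are the one at $y$ and the one at $z$, and that the latter is unchanged by the switch. You simply spell out the bookkeeping (the disjointness of $m$ from $\{x,x',y,y'\}$ and the role of bipartiteness in excluding $z \in \{y,y'\}$) a bit more explicitly than the paper does.
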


\begin{proof}
Suppose $z \in V(G) \setminus \lset{x, x'}$ with $zy \in V(J)$, and $zy$ is $J$-adjacent to $xy$. Then by the Degree Lemma there is an edge $zw \in M$ which is $J$-adjacent to $zy$. Now consider the matching $M^\times = M \cup \lset{xy', x'y} \setminus \lset{xy, x'y'}$. Note that $\abs{M^\times} = \abs{M}$ and $M^\times \subseteq V(J)$. Applying the Degree Lemma to $M^\times$, we have that since $zw \in M^\times$ is $J$-adjacent to $zy$, also $x'y \in M^\times$ must be  $J$-adjacent to $zy$. The reverse inclusion can be shown by exchanging the roles of $M$ and $M^\times$.
\end{proof}

If $M$ is a matching in a graph, then an \emph{$M$-exposed} vertex is one not in any edge of $M$. A path or cycle is \emph{$M$-alternating} if for every pair of consecutive edges, exactly one of them is in $M$.

\begin{lem}[Alternating Lemma] \label{lem:alternating}
Let $J$ be $M$-reduced, and let $e_1, \dots, e_q \in V(J)$ be the edges of an $M$-alternating path in $G$ starting at an $M$-exposed vertex or the edges of an $M$-alternating cycle in $G$ with $e_q e_1 \notin E(J)$. Then in both cases $e_i e_{i + 1} \notin E(J)$ for all $i = 1, \dots, q - 1$.
\end{lem}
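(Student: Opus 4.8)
The plan is to prove both cases by induction on the length $q$ of the alternating path or cycle, using the Degree Lemma (Lemma~\ref{lem:degree}) as the engine. The key observation is that the $M$-alternating structure forces a rigid pattern of $J$-adjacencies among consecutive edges: whenever $e_i$ is the non-$M$ edge of a consecutive pair, the Degree Lemma tells us it is $J$-adjacent to \emph{both} edges of $M$ it meets or to \emph{neither}. Since $e_i$ meets one edge of $M$ inside the path/cycle (say $e_{i-1}$ or $e_{i+1}$) at one endpoint, the other $M$-neighbor of $e_i$ must lie outside the path. We will propagate a contradiction to $M$-reducedness by constructing an explosion that leaves too large a matching behind.

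First I would treat the path case. Suppose $e_1, \dots, e_q$ are the edges of an $M$-alternating path starting at an $M$-exposed vertex, so $e_1 \notin M$ (the first edge cannot be in $M$ since its initial vertex is $M$-exposed), hence $e_1, e_3, e_5, \dots \notin M$ and $e_2, e_4, \dots \in M$. Suppose for contradiction that $e_i e_{i+1} \in E(J)$ for some minimal such $i$. If $i$ is odd then $e_i \notin M$ and $e_{i+1} \in M$; I would apply the Degree Lemma to $e_i$: it is $J$-adjacent to the two edges of $M$ it intersects. One of these is $e_{i+1}$; if $i \geq 2$ the other is $e_{i-1}$ (giving $e_{i-1}e_i \in E(J)$, contradicting minimality unless $i=1$), and if $i=1$ then $e_1$ meets only $e_2 \in M$ along the path, so its second $M$-neighbor is some $m \notin \{e_1,\dots,e_q\}$. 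In any case we obtain a controlled situation; the clean way to extract the contradiction, following the proof of the Degree Lemma itself, is: since $e_i e_{i+1} \in E(J)$, Corollary~\ref{cor:roy} gives $\conn(J \explode e_i e_{i+1}) \leq k-3$, but exploding $e_i e_{i+1}$ destroys only those edges of $M$ adjacent (in $G$) to $e_i$ or to $e_{i+1}$ — and by the alternating structure and the Degree Lemma these number at most the two already accounted for, so a matching of size $2k-1$ survives in the exploded graph (we may first swap $e_{i+1}$ for the adjacent non-$M$ path edge to keep one more $M$-edge alive, using the remark that $J$ stays $M'$-reduced under such parallel/alternating swaps). Then Theorem~\ref{thm:matchconn} forces $\conn(J \explode e_ie_{i+1}) \geq (2k-1)/2 - 2 > k-3$, a contradiction. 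If $i$ is even, $e_i \in M$ and $e_{i+1} \notin M$, and by symmetry (apply the Degree Lemma to $e_{i+1}$) the same argument runs.

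For the cycle case, $e_1, \dots, e_q$ form an $M$-alternating cycle (so $q$ is even, the $M$-edges and non-$M$-edges alternating around the cycle) with $e_q e_1 \notin E(J)$. I would again take a minimal $i$ with $e_i e_{i+1} \in E(J)$; the hypothesis $e_qe_1 \notin E(J)$ ensures the "wrap-around" pair is excluded, so the argument of the path case applies verbatim — the non-$M$ edge of the pair $\{e_i, e_{i+1}\}$ has its two $M$-neighbors inside the cycle, one of which being the other edge of the pair, and the rigidity argument via exploding $e_ie_{i+1}$ and Theorem~\ref{thm:matchconn} yields the contradiction.

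The main obstacle I anticipate is the bookkeeping in the explosion step: one must verify carefully that exploding $e_ie_{i+1}$ does not destroy more than two edges of $M$, and that after possibly replacing some $M$-edges by alternating-path neighbors (to preserve as large a surviving matching as possible, invoking the fact that $M$-reducedness passes to any $M'\subseteq V(J)$ with $|M'|=|M|$), the surviving matching still has size $\geq 2k-1$ and lives inside the exploded subgraph. Getting the parity of $i$ and the endpoint analysis (especially the boundary case $i=1$ at the $M$-exposed end of the path) exactly right is the delicate part; everything else is a direct application of the Degree Lemma and Theorem~\ref{thm:matchconn}.
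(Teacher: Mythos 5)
Your overall strategy is the paper's: take the minimal $i$ with $e_ie_{i+1}\in E(J)$, kill odd $i$ with the Degree Lemma, and kill even $i$ by exploding $e_ie_{i+1}$ and producing a surviving matching of size $2k-1$ that contradicts Theorem~\ref{thm:matchconn}. However, two steps as you describe them do not go through. First, the subcase $i=1$ in the path case: you assert that the second $M$-edge $J$-adjacent to $e_1$ is ``some $m\notin\lset{e_1,\dots,e_q}$''. No such edge can exist, because $J\subseteq L(G)$ means $J$-adjacency requires intersection in $G$, and the endpoint of $e_1$ not shared with $e_2$ is $M$-exposed, so $e_1$ meets exactly one edge of $M$ in all of $G$. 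That impossibility \emph{is} the contradiction that rules out $i=1$; as written you leave this case open and fall back on an explosion that, for odd $i$, would not work anyway (only $e_{i+1}$ and one further $M$-edge are destroyed, leaving $2k-2$ edges of $M$ and hence $\conn\geq k-3$, which is consistent with $\conn\leq k-3$).

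Second, and more seriously, the even case. Exploding $e_ie_{i+1}$ destroys exactly two edges of $M$ ($e_i$ itself and the other $M$-edge $J$-adjacent to $e_{i+1}$, namely $e_{i+2}$), so only $2k-2$ edges of $M$ survive and no contradiction results; one must exhibit a matching of size $2k-1$ inside $V(J\explode e_ie_{i+1})$. Your proposed fix, ``swap $e_{i+1}$ for the adjacent non-$M$ path edge,'' does not produce one: here $e_{i+1}\notin M$, and a single local swap cannot gain a unit. The correct construction switches the \emph{entire} initial alternating segment, $M'=M\cup\lset{e_1,e_3,\dots,e_{i-1}}\setminus\lset{e_2,e_4,\dots,e_i,e_{i+2}}$, which has size $2k-1$ precisely because the path starts at an $M$-exposed vertex; one then checks that these edges survive the explosion, using the minimality of $i$ (so $e_{i-1}e_i\notin E(J)$ and $e_{i-1}$ is not deleted) and the Degree Lemma (so the only $M$-edges deleted are $e_i$ and $e_{i+2}$). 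In the cycle case the swap is different again --- one switches all the way around the cycle, taking $M'=M\cup\lset{e_1,\dots,e_{i-1},e_{i+3},\dots,e_{q-1}}\setminus\lset{e_2,\dots,e_q}$ --- so ``applies verbatim'' undersells the modification. These are exactly the points you flagged as delicate bookkeeping, and they are where the proof actually lives.
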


\begin{proof}
\noindent \textbf{Case 1}. $e_1, e_2, \dots, e_q \in V(J)$ are the edges of an $M$-alternating path starting at an $M$-exposed vertex.

Suppose the lemma did not hold and let $j = \min\set{i}{e_i e_{i + 1} \in E(J)}$. If $j$ is odd, then $e_j \notin M$. Since $e_j e_{j + 1} \in E(J)$, by the Degree Lemma there must be another edge of $M$ which is $J$-adjacent to $e_j$. However, $e_1$ has an $M$-exposed vertex, so $j \neq 1$, from which it follows that $e_{j - 1}e_j \in E(J)$, which contradicts the minimality of $j$.

Therefore $j$ is even and $e_j \in M$. Since by assumption $J$ is $M$-reduced, $e_j e_{j + 1}$ is explodable, hence $J' = J \explode e_j e_{j + 1}$ satisfies $\conn(J') \leq k - 3$. Note that since $e_{j - 1}e_j \notin E(J)$, the explosion does not delete $e_{j - 1}$. Thus $M' = M \cup \lset{e_1, e_3, \dots, e_{j - 1}} \setminus \lset{e_2, e_4, \dots, e_j, e_{j + 2}} \subseteq V(J')$ is a matching of size $2k - 1$ (if $j + 2 > q$, let $e_{j + 2}$ be the second edge of $M$ that is $J$-adjacent to $e_{j + 1}$, which exists by the Degree Lemma). This means that by Theorem~\ref{thm:matchconn}, $\conn(J') \geq \frac{2k - 1}{2} - 2 > k - 3$, which is a contradiction. Thus the lemma holds for paths.

\noindent \textbf{Case 2}. $e_1, e_2, \dots, e_q \in V(J)$ are the edges of an $M$-alternating cycle with $e_q e_1 \notin E(J)$.

Since we can reverse the direction of the cycle if necessary, we can assume without loss of generality that $e_q \in M$ and $e_1 \notin M$. If the lemma does not hold, then let $j = \min\set{i}{e_i e_{i + 1} \in E(J)}$. If $j$ is odd, then a reasoning identical to the one in Case 1 yields a contradiction.

Therefore $j$ is even and $e_j \in M$. By assumption, $e_j e_{j + 1}$ is explodable, hence $J' = J \explode e_j e_{j + 1}$ satisfies $\conn(J') \leq k - 3$. We have a matching $M' = M \cup \lset{e_1, e_3, \dots, e_{j - 1}, e_{j + 3}, \dots, e_{q - 1}} \setminus \lset{e_2, e_4, \dots, e_q} \subseteq V(J')$ of size $2k - 1$, so by Theorem~\ref{thm:matchconn}, $\conn(J') \geq \frac{2k - 1}{2} - 2 > k - 3$, which is a contradiction. Thus the lemma also holds for cycles.
\end{proof}

Given two incident non-parallel edges $m \in M$ and $e \in V(J) \setminus M$, we define $\cP_M(m, e)$ to be the set of edges of $M$ which participate in some $M$-alternating path in $G$ starting with $m$, continuing with $e$, and using only edges from $V(J)$. Note that we do not require the edges of the path to be $J$-adjacent. Also note that $m \in \cP_M(m, e)$, and if $me \in E(J)$, then $\cP_M(m, e)$ contains at least one more edge of $M$, namely the other one $J$-adjacent to $e$, which exists by the Degree Lemma.

\begin{lem} \label{lem:longfact}
Let $m \in M$, $e \in V(J) \setminus M$ with $me \in E(J)$, and let $m' \in M$ be the other $M$-edge $J$-adjacent to $e$. Let $W_1$ and $W_2$ be the vertex classes of the bipartite graph $G$, and let $m \cap e \subseteq W_i$. Then for every $m^* \in \cP_M(m, e) \setminus \lset{m, m'}$, there is an edge $g \in V(J)$ for which the following hold:
\begin{enumerate}
	\renewcommand{\theenumi}{(\roman{enumi})}
	\renewcommand{\labelenumi}{\theenumi}
	\item \label{longfact:jadjacent} $g$ is $J$-adjacent to $m^*$,
	\item \label{longfact:sides} $g \cap m^* \subseteq W_{3 - i}$,
	\item \label{longfact:unreachable} If $\hat{m} \in M$ is the other $M$-edge (besides $m^*$) $J$-adjacent to $g$, then $\hat{m} \notin \cP_M(m, e)$.
\end{enumerate}
\end{lem}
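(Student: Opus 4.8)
The plan is to prove this by induction on the length of a shortest $M$-alternating path in $G$ from $m$ through $e$ that reaches $m^*$ and uses only edges of $V(J)$. Fix such a path $P = m, e, f_1, m_1, f_2, m_2, \dots, f_\ell, m^*$, where the $m_i \in M$ and the $f_i \in V(J) \setminus M$. Since $m^* \ne m, m'$, the path has length at least $3$ (i.e.\ it passes through at least $e$ and one more non-$M$ edge $f_1$), so it ends at $m^*$ after some $M$-edge; write $g$ for the last non-$M$ edge $f_\ell$ on $P$, so that $g$ is incident to $m^*$. Properties~\ref{longfact:jadjacent} and~\ref{longfact:sides} are the first thing to establish: since $g = f_\ell$ is an edge of $V(J)$ incident to $m^*$, the Degree Lemma (Lemma~\ref{lem:degree}) says that either no edge of $M$ or exactly two edges of $M$ are $J$-adjacent to $g$; we must rule out the former. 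If $g$ were $J$-adjacent to no edge of $M$, then in particular $g\, m^* \notin E(J)$ (so the path $P$ is $M$-alternating but not ``$J$-alternating'' at this step, which is allowed), and we use the Alternating Lemma (Lemma~\ref{lem:alternating}): the portion of $P$ from the $M$-exposed end\,--\,or rather, we reroute. Here is the cleaner way: consider the $M$-alternating walk obtained by following $P$ and then appending $m^*$; if $g\,m^* \notin E(J)$ we are in trouble for~\ref{longfact:jadjacent}, so instead pick $g$ to be a non-$M$ edge incident to $m^*$ that \emph{is} $J$-adjacent to $m^*$. Such an edge exists: $m^* \in V(J)$ is not isolated in $J$ (an $M$-reduced subgraph has no isolated vertices, else $\conn(J)=\infty$), and whatever edge $h$ witnesses $m^*\,h \in E(J)$, the Degree Lemma forces a \emph{second} $M$-edge $J$-adjacent to $h$; so $h$ is incident to two edges of $M$, one being $m^*$; take $g := h$. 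Then~\ref{longfact:jadjacent} holds and~\ref{longfact:sides} is the observation that, since $m^*$ was reached by an $M$-alternating path of a certain parity from $m$, the vertex $m^* \cap g$ lies on the side opposite to $m \cap e$ — this is a parity count along $P$, using that $G$ is bipartite and the path alternates.

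With $g$ chosen, let $\hat m \in M$ be the other $M$-edge $J$-adjacent to $g$ (it exists, as just argued). The heart of the proof is property~\ref{longfact:unreachable}: $\hat m \notin \cP_M(m,e)$. Suppose for contradiction that $\hat m \in \cP_M(m,e)$, so there is an $M$-alternating path $Q$ in $G$ from $m$ through $e$, using only edges of $V(J)$, that includes $\hat m$. The idea is to splice $Q$ together with the edge $g$ and the short hook at $m^*$ to produce a \emph{long} $M$-alternating path or cycle in $G$ on which we can apply the Alternating Lemma to derive a contradiction with $M$-reducedness. Concretely: $g$ is $J$-adjacent to both $m^*$ and $\hat m$, and both $m^*, \hat m$ are reachable from $m,e$ by $V(J)$-paths; concatenating the path to $\hat m$, then the edge $g$, then $m^*$ yields an $M$-alternating structure $R$ whose final edge before $m^*$ is $g$ with $g\,m^* \in E(J)$. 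If $R$ is a path starting at an $M$-exposed vertex, or a cycle with its closing edge not in $E(J)$, the Alternating Lemma says \emph{no} two consecutive edges of $R$ are $J$-adjacent — contradicting $g\,m^* \in E(J)$ (or $g\,\hat m \in E(J)$). The bookkeeping to guarantee that $R$ genuinely has one of these two forms — i.e.\ that we can arrange an $M$-exposed endpoint or a non-$J$ closing edge — is where one must be careful: one extends the path maximally in the direction of $m$ (past $m$, continuing $M$-alternating through $V(J)$ as long as possible) to reach either an $M$-exposed vertex or a situation where the next edge would close a cycle, and checks that in the cycle case the closing edge is not in $E(J)$ by the minimality/choice of the configuration. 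One may need to swap $M$ for the symmetric-difference matching $M^\times$ along $Q$ (as in the proof of Corollary~\ref{cor:c4adjacencies}), exploiting that $J$ is $M^\times$-reduced whenever $M^\times \subseteq V(J)$ has $|M^\times| = |M|$, to force the endpoint to be $M$-exposed.

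The main obstacle, as usual in these arguments, is purely structural bookkeeping rather than any new topological input: assembling the walk $R$ so that it is a genuine simple $M$-alternating path (no repeated vertices) ending in an $M$-exposed vertex, or a genuine $M$-alternating cycle with non-$J$-adjacent closing pair, so that the Alternating Lemma applies cleanly. Repeated vertices are the real danger — when $Q$ and the new hook at $m^*$ share vertices, the concatenation is only a walk, not a path, and one must extract a shortest relevant subpath and re-examine parity and $J$-adjacency of its endpoints. I expect the resolution to mirror the case analysis in the Alternating Lemma's proof: at the first place two consecutive edges along the extracted path are $J$-adjacent, one either finds (odd case) a forbidden extra $M$-neighbour via the Degree Lemma, or (even case) explodes that $M$-edge and produces a matching of size $2k-1$ in $J \explode e_j e_{j+1}$, contradicting $\conn(J \explode e_j e_{j+1}) \le k-3$ via Theorem~\ref{thm:matchconn}. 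Everything else — properties~\ref{longfact:jadjacent} and~\ref{longfact:sides} — follows from the Degree Lemma together with a bipartite parity count and should be short.
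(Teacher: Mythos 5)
Your proposal has two genuine gaps, and they sit exactly where the real work of this lemma is.

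First, the existence of an edge $g$ satisfying \emph{both} \ref{longfact:jadjacent} and \ref{longfact:sides} is not established by your argument. Non-isolation of $m^*$ in $J$ gives you some $J$-neighbour $h$ of $m^*$, but nothing forces $h \cap m^* \subseteq W_{3-i}$: it may well meet $m^*$ on the $W_i$ side. Conversely, the last non-$M$ edge $f_\ell$ of the path $P$ does meet $m^*$ in $W_{3-i}$ by the parity count, but it is only required to lie in $V(J)$, not to be $J$-adjacent to $m^*$. Your ``parity count along $P$'' applies to $f_\ell$, not to the $h$ you actually chose, so you have an edge with property \ref{longfact:sides} and a (possibly different) edge with property \ref{longfact:jadjacent}, and no edge guaranteed to have both. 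Indeed the set $Q$ of edges satisfying \ref{longfact:jadjacent} and \ref{longfact:sides} can a priori be empty, and the paper's proof must (and does) handle that case.

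Second, and more seriously, the Alternating Lemma requires an $M$-alternating path starting at an \emph{$M$-exposed} vertex (or a cycle whose closing pair is non-$J$-adjacent), and in the configuration $(G, M, J)$ as given there is no exposed vertex available: every path in $\cP_M(m,e)$ starts inside the matching edge $m$. The paper's essential move, which your proposal omits, is to explode $me$ and then iteratively delete decouplable edges to obtain an $M'$-reduced subgraph $J' \subseteq J \explode me$ with $M' = M \setminus \lset{m, m'}$ and $\conn(J') \leq k-3$; this manufactures the exposed vertex $a \in m' \setminus e$ from which all the alternating paths are launched. Your proposed substitute --- switching $M$ along the path $Q$ to an $M^\times$ --- does not work: an alternating path from $m$ to $\hat{m}$ begins and ends with $M$-edges, so the switched set has size $\abs{M}-1$ and $J$ need not be $M^\times$-reduced, and no exposed endpoint of the right kind is produced. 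Relatedly, your target contradiction ($gm^* \in E(J)$ versus the Alternating Lemma) is not the one that closes the argument; the paper instead assumes the lemma fails for $m^*$, i.e.\ that \emph{every} $g \in Q$ has its second matching neighbour $\hat m$ inside $\cP_M(m,e)$, and uses the Alternating Lemma in $J'$ (on both sides of $m^*$) to conclude that $m^*$ is isolated in $J'$, whence $\conn(J') = \infty$, contradicting $\conn(J') \leq k - 3$. This is a global contradiction over all candidates $g$ at once, not a direct construction of one good $g$; your induction-on-path-length framing does not reach it.
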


\begin{proof}
Suppose not. Then fix $m^* \in \cP_M(m, e) \setminus \lset{m, m'}$ for which the lemma fails. Let $Q = \set{g \in V(J)}{gm^* \in E(J), g \cap m^* \subseteq W_{3 - i}}$. If $Q$ is not empty, then by assumption every edge $g \in Q$ fails property~\ref{longfact:unreachable}.

Since $J$ is $M$-reduced, we have $\conn(J) \leq k - 2$ and when we explode $me$, we get $\conn(J \explode me) \leq k - 3$. We then iteratively delete decouplable edges of $J \explode me$ in an arbitrary order until no edge is decouplable. This results in an $M'$-reduced $J' \subseteq J \explode me$, where $M' = M \setminus \lset{m, m'}$ and $\conn(J') \leq k - 3$ (recall that deleting a decouplable edge does not increase the connectedness). Let $a$ be the vertex in $m' \setminus e$.

Note that if $a$ is not the endpoint of any edge contained in $V(J')$, we are done, since then $\cP_M(m, e) = \lset{m, m'}$, so there is no $m^*$ to choose, and the statement is vacuously true. Thus, assume this is not the case.

We will arrive at a contradiction by showing that $m^*$ is isolated in $J'$, which implies $\conn(J') = \infty$.

First, we show that $m^*$ has no $J'$-neighbors incident to it in $W_{3 - i}$. Take an arbitrary edge $g$ which intersects $m^*$ in $W_{3 - i}$. If $m^*g \notin E(J)$, then $m^*g \notin E(J')$, so we are done. Thus assume $m^*g \in E(J)$, which implies $g \in Q$, and this means that $\hat{m}$, the other $M$-edge $J$-adjacent to $g$ (which exists by the Degree Lemma for $M$ and $J$), is in $\cP_M(m, e)$ by our assumption on $m^*$. If $\hat{m} \in \lset{m, m'}$, then $m^*g \notin E(J')$ by the Degree Lemma applied to $J'$ (because $m, m' \notin V(J')$). Otherwise, there is an $M$-alternating path $e_1, \dots, e_q = \hat{m}$ starting at the vertex $a \in e_1$. This is clearly also an $M'$-alternating path, and since $a$ is an $M'$-exposed vertex in $J'$, the Alternating Lemma (Lemma~\ref{lem:alternating}) applied to $M'$ and $J'$ gives that none of the pairs $e_i$, $e_{i + 1}$ are $J'$-adjacent; in particular, $e_{q - 1}\hat{m} \notin E(J')$. Now there are two cases.

\noindent \textbf{Case 1}. $m^*$ is on this path.

Then the segment of the path starting at $m^*$ and ending with $\hat{m}$, together with $g$, forms an $M'$-alternating cycle. Since $e_{q - 1}\hat{m} \notin E(J')$, the Alternating Lemma tells us that $m^*$ and $g$ are not $J'$-adjacent.

\noindent \textbf{Case 2}. $m^*$ is not on the path.

Then $e_1, \dots, e_q, g, m^*$ is an $M'$-alternating path, and the Alternating Lemma again tells us that $m^*$ and $g$ are not $J'$-adjacent. 

This proves that $m^*$ has no $J'$-neighbor which intersects it in $W_{3 - i}$.

We now show that it also has no $J'$-neighbor intersecting it in $W_i$. Take an arbitrary edge $g$ which intersects $m^*$ in $W_i$. We may again assume $g$ is $J$-adjacent to $m^*$, and hence there is an $\hat{m} \in M$, which is the other $M$-edge $J$-adjacent to $g$. Again, if $\hat{m} \in \lset{m, m'}$, then $m^*g \notin E(J')$ because then $\hat{m} \notin V(J')$ and the Degree Lemma for $J'$ gives that $g$ is not $J'$-adjacent to any edge of $M' = M \cap V(J')$. There is an $M'$-alternating path $e_1, \dots, e_q = m^*$ starting at the vertex $a \in e_1$. Because the path starts at an $M'$-exposed vertex, no two consecutive edges are $J'$-adjacent by the Alternating Lemma. Again there are two cases.

\noindent \textbf{Case 1}. $\hat{m}$ is on this path.

Then the segment of the path starting at $\hat{m}$ and ending with $m^*$, together with $g$, forms an $M'$-alternating cycle. Since $e_{q - 1}m^* \notin E(J')$, the Alternating Lemma tells us that $m^*$ and $g$ are not $J'$-adjacent.

\noindent \textbf{Case 2}. $\hat{m}$ is not on the path.

Then $e_1, \dots, e_q, g$ is an $M'$-alternating path, and the Alternating Lemma will again tell us that $m^* = e_q$ and $g$ are not $J'$-adjacent.

In conclusion, we have shown that $m^*$ does not have any neighbor in $J'$, which was our desired contradiction. Hence no such $m^*$ exists and the proof is complete.
\end{proof}

\begin{lem} \label{lem:c4switch}
Let $m, e, m', f \in V(J)$ be the edges of an $M$-alternating $C_4$ with $m, m' \in M$. Let $M^\times = M \cup \lset{e, f} \setminus \lset{m, m'}$. Then $\cP_{M^\times}(e, m) = \cP_M(m, e) \cup \lset{e, f} \setminus \lset{m, m'}$; in particular, $\abs{\cP_{M^\times}(e, m)} = \abs{\cP_M(m, e)}$.
\end{lem}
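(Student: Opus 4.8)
The plan is to convert the statement into a bijection between alternating paths. Since the $C_4$ is $M$-alternating with $m, m' \in M$, its two $M$-edges are opposite and its edges in cyclic order are $m, e, m', f$; accordingly label its vertices $a, b, c, d$ so that $m = ab$, $e = bc$, $m' = cd$, and $f = da$. I will use three elementary facts repeatedly: (a) because $M$ and $M^\times$ are matchings, $m'$ is the only $M$-edge at $c$ while $f$ is the only $M^\times$-edge at $a$; (b) every edge of $G$ meeting none of $a, b, c$ lies in both of $M, M^\times$ or in neither, since $M \setminus \lset{m, m'} = M^\times \setminus \lset{e, f}$ and each of $m, e, m', f$ touches $\lset{a, b, c}$; and (c) $m, e, m', f \in V(J)$ by hypothesis.

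First I would pin down the shape of the paths in question. An $M$-alternating path of $G$ using only edges of $V(J)$, starting with $m$ and continuing with $e$, runs through $a, b, c$ in that order; if it has a third edge, that edge is the only $M$-edge at $c$, namely $m' = cd$, so the path reads $a, b, c, d, v_5, v_6, \dots$; and each further edge leaves $d$ along a non-$M$ edge that, by simplicity of the path, avoids $a, b, c$. Symmetrically, an $M^\times$-alternating $V(J)$-path starting with $e$ and continuing with $m$ reads $c, b, a, \dots$, and, if it continues, is forced to use $f$, giving $c, b, a, d, v_5, v_6, \dots$, with each further edge leaving $d$ along a non-$M^\times$ edge avoiding $a, b, c$. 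I would then introduce the map $P = (a, b, c, d, v_5, v_6, \dots) \mapsto P' = (c, b, a, d, v_5, v_6, \dots)$ and verify it is a bijection between these two families: $P'$ is a simple path sharing its entire tail from $d$ onward with $P$; it uses only edges of $V(J)$, its first three edges being $e, m, f$; and it is $M^\times$-alternating since $e \in M^\times$, $m \notin M^\times$, $f \in M^\times$, its next edge is non-$M^\times$, and beyond $d$ the matchings $M$ and $M^\times$ agree on the tail. The inverse map simply interchanges $a$ and $c$ again.

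Finally I would read off both $\cP$-sets from this correspondence. The $M$-edges occurring on such paths are $m$ (on every one), $m'$ (already on the path $a, b, c, d$, whose edges $m, e, m'$ all lie in $V(J)$), and the $M$-edges occurring on the tails; call the last set $\cR$. Likewise, the $M^\times$-edges occurring on the dual paths are $e$, $f$, and the $M^\times$-edges on the tails; since $P$ and $P'$ share the same tails and tails avoid $\lset{a, b, c}$, where $M$ and $M^\times$ coincide, this last set is again exactly $\cR$. As $m, m', e, f$ are edges of the $C_4$, none of them appears on any tail, so $\cR$ is disjoint from $\lset{m, m'}$ and from $\lset{e, f}$. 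Hence $\cP_M(m, e) = \lset{m, m'} \cup \cR$ and $\cP_{M^\times}(e, m) = \lset{e, f} \cup \cR$, which is precisely $\cP_{M^\times}(e, m) = \cP_M(m, e) \cup \lset{e, f} \setminus \lset{m, m'}$; the equality of cardinalities follows from $m \neq m'$, $e \neq f$, $\lset{m, m'} \subseteq \cP_M(m, e)$, and $\lset{e, f} \cap \cP_M(m, e) = \emptyset$ (the last because $\cP_M(m, e)$ consists of edges of $M$ only, while $e, f \notin M$).

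The step I expect to demand the most care is checking that the path map is genuinely well defined and bijective: one must confirm that once a path has reached $d$, its (forced) continuation can never re-enter $\lset{a, b, c}$ — this is what makes the tails of $P$ and $P'$ literally identical and lets $M$ and $M^\times$ be swapped freely along them — and one must treat the shortest paths $a, b, c$ and $c, b, a$ as degenerate instances of the bijection. It is worth noting that the argument uses neither $M$-reducedness nor any topological input; the lemma is a purely combinatorial fact about alternating paths and the symmetric difference of $M$ with a $4$-cycle.
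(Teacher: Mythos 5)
Your proof is correct and follows essentially the same route as the paper's: both arguments observe that any $M$-alternating path starting $m,e$ is forced to continue with $m'$ to the vertex $m'\cap f$, that any $M^\times$-alternating path starting $e,m$ is forced to continue with $f$ to that same vertex, and that the tails from there onward coincide because the two matchings agree outside the $C_4$. The paper states this in three sentences; your write-up just makes the bijection between the two families of paths and the bookkeeping of which matching edges lie on the tails explicit.
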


\begin{proof}
Let $a$ be the vertex in $m' \cap f$. Any $M$-alternating path starting with $m, e$ must continue with $m'$ and then a path starting at $a$ and never again intersect the vertices of the $C_4$. Similarly, any $M^\times$-alternating path starting with $e, m$ must continue with $f$ and a path starting at $a$ and never again intersect the vertices of the $C_4$. Thus the edges outside of the $C_4$ which are reached will be the same, because the matchings are the same outside the $C_4$.
\end{proof}

\begin{lem} \label{lem:undoswitch}
Let $m, e, m', f \in V(J)$ be the edges of an $M$-alternating $C_4$ with $m, m' \in M$, and let $M^\times = M \cup \lset{e, f} \setminus \lset{m, m'}$. Then $J$ has a CP-decomposition with respect to $M^\times$ if and only if $J$ has a CP-decomposition with respect to $M$.
\end{lem}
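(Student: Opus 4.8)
The plan is to prove just one implication; the other follows by symmetry. Indeed, the four edges $m,e,m',f$ form an $M^\times$-alternating $C_4$ as well, with $e,f$ now in the matching and $M = M^\times \cup \lset{m,m'} \setminus \lset{e,f}$; and since $\abs{M^\times}=\abs{M}$ and $M^\times\subseteq V(J)$, the remark following Definition~\ref{def:mreduced} gives that $J$ is $M^\times$-reduced, so the roles of $M$ and $M^\times$ are interchangeable. It therefore suffices to turn a CP-decomposition of $J$ with respect to $M$ into one with respect to $M^\times$.

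The first ingredient is a dichotomy for the $J$-adjacencies inside the $C_4$. Running the Alternating Lemma (\lemref{lem:alternating}) on the $M$-alternating cycle $m,e,m',f$, read off from each of its four edges in turn, I would deduce that either all four pairs $me,em',m'f,fm$ lie in $E(J)$, or none of them does: reading the cycle as $m,e,m',f$, if $fm\notin E(J)$ then the Alternating Lemma kills $me,em',m'f$, and cycling this statement around gives the claim. In the first case the $C_4$ with edges $m,e,m',f$ satisfies the adjacency condition of property~\ref{cp:cycles} with respect to $M^\times$; in the second case $e$ and $f$ are $J$-adjacent to no edge of $M$ at all, since (as $M$ is a matching) their only possible $M$-neighbours are $m$ and $m'$.

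Next I would locate $m$ and $m'$ inside a fixed CP-decomposition $\cD$ with respect to $M$. Its $k$ members are vertex-disjoint and each contains exactly two $M$-edges, so they partition $M$; let $R\ni m$ and $R'\ni m'$ be the members containing them. Since each vertex of $G$ lies in at most one member, and the member through a vertex $v$ contains the $M$-edge at $v$, examining property~\ref{cp:edges} for $e$ and $f$ (each is parallel to an edge of some $S_i$ or at home in some $T_j$) pins down the configuration: either $R=R'$, forced then to be a $C_4$ on the four vertices of our $C_4$ whose two non-$M$ edges are parallel to $e$ and to $f$; or $R\neq R'$, forced then to be two $P_4$'s having $m$, respectively $m'$, as an end edge, with $e$ and $f$ at home in them. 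I would then modify $\cD$ only at $R$ (and $R'$): delete them and insert, when the first branch of the dichotomy holds, the $C_4$ with edges $m,e,m',f$, and otherwise an appropriate $P_4$ or pair of subgraphs built from the ``surviving halves'' of $R,R'$ together with the $C_4$'s edges, keeping the number of members at $k$. Property~\ref{cp:edges} is maintained because an edge parallel to a non-$M$ edge of $R$ becomes an edge parallel to $e$ or $f$, an edge at home in $R$ or $R'$ stays at home through the same interior vertex in the new $P_4$ there, and Corollary~\ref{cor:c4adjacencies} together with the remark after Definition~\ref{def:mreduced} yield the $J$-adjacencies required by properties~\ref{cp:cycles}--\ref{cp:paths} for the new members, including those involving parallel copies of $e$ and $f$.

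The main obstacle is the bookkeeping in the surgery, concentrated in two places. First, the case $R\neq R'$: replacing two members by one would lose a member, so one must show this configuration is rigid enough that the non-$m$ half of $R$ and the non-$m'$ half of $R'$, glued to two edges of the $C_4$, reassemble into two vertex-disjoint $C_4$'s or $P_4$'s meeting all three properties for $M^\times$. Second, the ``no internal adjacency'' branch of the dichotomy, where the $C_4$ $m,e,m',f$ cannot serve as an $S_i$, so the member(s) housing $e$ and $f$ must be produced differently (via parallel copies of $m,m'$, or via a $P_4$ through a suitable interior vertex). In both situations the delicate step is verifying property~\ref{cp:cycles} or~\ref{cp:paths} for the recombined subgraphs --- that all their intersecting edge pairs are $J$-adjacent --- using only the dichotomy, the Degree Lemma (\lemref{lem:degree}), and Corollary~\ref{cor:c4adjacencies}.
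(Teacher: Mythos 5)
Your skeleton matches the paper's (symmetry reduction, locate $m$ and $m'$ in a fixed CP-decomposition with respect to $M$, then perform surgery), but the proposal stops short of a proof: you explicitly flag the surgery as an unresolved ``obstacle,'' and that surgery is precisely the content of the lemma. In the case you call $R\neq R'$ (the paper's Case~2, after it rules out either of $m,m'$ lying in a $C_4$), the paper's resolution is concrete and worth stating: writing $T_1=mgh$ and $T_2=m'g'h'$ with $e$ at home at the interior vertex $m\cap g$ and $f$ at home at $m'\cap g'$, one replaces $T_1,T_2$ by the two new $P_4$'s $egh$ and $fg'h'$. This keeps the member count at $k$ and the two new paths are vertex-disjoint (their vertex sets are $\{m'\cap e\}\cup(V(T_1)\setminus m)$ and $\{m\cap f\}\cup(V(T_2)\setminus m')$). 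The adjacencies needed for property~\ref{cp:paths} --- that $e$ is $J$-adjacent to $g$ and $f$ to $g'$ --- and the re-homing of the displaced edges (including the copies parallel to $m,m',e,f$) all come from a single application of Corollary~\ref{cor:c4adjacencies}, which says the $J$-neighbours of $e$ and of $m$ at $e\cap m$ outside the $4$-cycle coincide. You gesture at ``glue the surviving halves of $R,R'$ to two edges of the $C_4$'' and at Corollary~\ref{cor:c4adjacencies}, but you do not say what the new members are or check the three CP-decomposition properties for them, which is where the lemma actually lives.

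Your Alternating-Lemma dichotomy (all four of $me,em',m'f,fm$ in $E(J)$, or none) is correctly derived and true, but it turns out to be inessential, and it misleads you into seeing a second obstacle that isn't there. In the two-$P_4$'s case the paper never uses the $C_4$ $mem'f$ as a member, so it is irrelevant whether its consecutive edges are $J$-adjacent; the surgery and Corollary~\ref{cor:c4adjacencies} handle both branches of your dichotomy uniformly. The one place where the ``no internal adjacency'' branch together with $R=R'$ would genuinely be delicate --- a $C_4$ member $m e'' m' f''$ with $e''\parallel e$, $e''\neq e$ --- is also glossed over in the paper's Case~1/Case~2 split, so I would not fault you for leaving it vague, but it is another loose end rather than something your argument closes. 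Net: right approach, same key lemma (Corollary~\ref{cor:c4adjacencies}), but the decisive construction is announced rather than performed.
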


\begin{proof}
Suppose $J$ has a CP-decomposition with respect to $M$. We will show that it has a CP-decomposition with respect to $M^\times$. Since the roles of $M$ and $M^\times$ are symmetric, the reverse implication is analogous. Note also that $J$ is $M$-reduced if and only if it is $M^\times$-reduced. There are two cases.

\noindent \textbf{Case 1}. $mem'f$ is a $C_4$ in the CP-decomposition with respect to $M$.

Then $mem'f$ is still an $M^\times$-alternating $4$-cycle and incident edges are $J$-adjacent, so the same CP-decomposition is also a CP-decomposition with respect to $M^\times$.

\noindent \textbf{Case 2}. $mem'f$ is not a $C_4$ in the CP-decomposition with respect to $M$.

Then $m$ and $m'$ must be in either a $C_4$ or a $P_4$ in this decomposition. Suppose first that $m \in S_1$, where $S_1$ is a $C_4$ in the CP-decomposition. Then $m' \notin S_1$, and hence $e, f \notin S_1$. It follows that $e$ and $f$ are neither equal to nor parallel to edges of any $C_4$ in the CP-decomposition, and thus by property~\ref{cp:edges} of the CP-decomposition, they are each at home in some $P_4$ of the CP-decomposition. This means that both endpoints of $m'$ must be interior vertices of some $P_4$. However this is impossible, since $M$-edges are the ending edges of the $P_4$'s of the decomposition, so only one endpoint could be interior. So $m$ is not in a $C_4$ of the decomposition and by symmetry, neither is $m'$.

From now on we assume that $m$ and $m'$ are in two distinct $P_4$'s (they are not in the same $P_4$ because then either $e$ or $f$ would not be at home in any $P_4$). Call these $P_4$'s $T_1$ and $T_2$ with edges $mgh$ and $m'g'h'$, respectively. Note that $m \cap g$ contains an interior vertex, as does $m' \cap g'$, and since $e$ and $f$ must be at home somewhere, one of them, say $e$, is at home in $m \cap g$ and the other ($f$) in $m' \cap g'$ (since $e$ and $f$ are disjoint).

We claim now that replacing $T_1$ and $T_2$ by $egh$ and $fg'h'$ gives us a CP-decomposition with respect to $M^\times$. To check this, we must show that $egh$ and $fg'h'$ are both $P_4$'s whose incident edges are $J$-adjacent, and that every edge which was at home in $T_1$ or $T_2$ is still at home in either $egh$ or $fg'h'$. Both are straightforward consequences of Corollary~\ref{cor:c4adjacencies}, which states that the $J$-neighbors of $e$ and of $m$ at $e \cap m$, which are outside of the $4$-cycle are the same (and likewise for $f$ and $m'$). Thus, $e$ is $J$-adjacent to $g$ and $f$ is $J$-adjacent to $g'$. And any edge which was at home in $T_1$ because it was $J$-adjacent to $m$ is $J$-adjacent also to $e$, and so still at home in $egh$ (and likewise for $T_2$ and $fg'h'$). The only edges left to check are $m$ and $m'$ and edges parallel to $m$, $m'$, $e$, or $f$. Here $m$ and $m'$ are at home in $egh$ and $fg'h'$, respectively, because they are $J$-adjacent to $g$ and $g'$, respectively. Edges parallel to $m$ or $m'$ are $J$-adjacent to $g$ or $g'$, respectively, since they needed to be at home in some $P_4$ of the original CP-decomposition and those are the only possibilities (they are not $J$-adjacent to $m$ or $m'$ by the Degree Lemma, because $J$ is $M$-reduced). Thus they are at home in the new $P_4$'s. All $e$-parallel edges are also $J$-adjacent to $g$, and $f$-parallel ones to $g'$ because of Corollary~\ref{cor:c4adjacencies}. This means that this is indeed a CP-decomposition with respect to $M^\times$, and it is clearly of the same order. This completes the proof.
\end{proof}

\subsection{Proof of the CP-Decomposition Theorem}

We are now ready to start the proof of Theorem~\ref{thm:cpdecomposition}.

\begin{proof}[Proof of Theorem~\ref{thm:cpdecomposition}]
We prove this by induction on $\abs{M}$. Recall that $\abs{M}$ must be even, so write $\abs{M} = 2k$ and proceed by induction on $k$.

For $k = 0$, we have $\conn(J) = -2$, which means $V(J)$ is empty. Thus, it has a CP-decomposition of order $0$, which is an empty collection of cycles and paths.

For $k = 1$, we get $\conn(J) = -1$, so $I(J)$ must have at least two components. Thus there exist two disjoint non-empty subsets $E_1, E_2 \subseteq V(J)$ with $V(J) = E_1 \cup E_2$ such that for all $e_1 \in E_1$ and $e_2 \in E_2$, we have $e_1 e_2 \in E(J)$. By assumption there is a matching $M = \lset{m_1, m_2} \subseteq V(J)$. Since $m_1$ and $m_2$ are not $J$-adjacent (as they are disjoint), they must be in the same component of $I(J)$, and so assume without loss of generality that $m_1, m_2 \in E_1$. Then every edge in $E_2$ is $J$-adjacent to both $m_1$ and $m_2$, and since $G$ is bipartite, every such edge must intersect $m_1$ in one vertex class of $G$ and $m_2$ in the other. Thus the graph formed by the edges in $E_2$ together with $m_1$ and $m_2$ is either a $C_4$ or a $P_4$ together with possibly some parallel edges. If it forms a $C_4$, then the rest of $E_1$ must consist of edges parallel to $m_1$ and $m_2$ because they must be $J$-adjacent to both of the non-$M$ edges of the $C_4$. If the graph is a $P_4$, then the rest of the edges in $E_1$ must be $J$-adjacent to all of the middle edges, and hence are at home in that $P_4$. Therefore, $J$ has a CP-decomposition consisting of a single $C_4$ or $P_4$. This completes the proof for $k = 1$.

Now assume $k \geq 2$. If $\abs{E(J)} = 0$, then $\conn(J) = \infty$, so the statement is vacuously true. So assume $\abs{E(J)} \geq 1$. We may assume $J$ is $M$-reduced so that all edges of $J$ are explodable. (If $J$ is not $M$-reduced, iteratively delete decouplable edges of $J$ until the subgraph is $M$-reduced. A CP-decomposition for the subgraph of $J$ will also be a CP-decomposition of $J$.)

\noindent \textbf{Case 1}. There is an edge $m = ab \in M$ with no $J$-neighbor incident to $a$.

Then there must be a $J$-neighbor $e$ of $m$ incident to $b$, otherwise $m$ is isolated, and $\conn(J) = \infty$, which is a contradiction. Since $J$ is $M$-reduced, when we explode $me \in E(J)$, we have that $J' = J \explode me$ satisfies $\conn(J') \leq k - 3$. By the Degree Lemma for $J$, there is another edge $m' \in M$ which is $J$-adjacent to $e$. Since $M' = M \setminus \lset{m, m'} \subseteq V(J')$ is a matching of size $2k - 2$, we have that $J'$ together with $M'$ satisfy the conditions of the theorem for $k' = k - 1$, so by induction, there is a CP-decomposition of $J'$ with respect to $M'$, say $S_1, \dots, S_s, T_1, \dots T_t$ with $s + t = k - 1$, where each $S_i \cong C_4$ and each $T_j \cong P_4$.

Define $T_{t + 1}$ to be a $P_4$ consisting of the edges $m$, $e$, and $m'$. We claim that $S_1, \dots, S_s, T_1, \dots, T_{t + 1}$ is a CP-decomposition of $J$ with respect to $M$. Since $J' \subseteq J$ and $M' \subseteq M$, the subgraphs $S_i$ form $C_4$'s with two $M$-edges, with intersecting edges $J$-adjacent to each other, and the subgraphs $T_j$ with $j < t + 1$ form $P_4$'s also with this property. The new path $T_{t + 1}$ of course satisfies this as well, so the only thing we still need to check is that the remaining edges are parallel to edges of some $S_i$ or at home in some $T_j$. Clearly, this is already true of the edges in $V(J')$, so consider an edge $f \in V(J) \setminus V(J')$. Then $f \in N_J(m)$ or $f \in N_J(e)$. If $f \in N_J(e)$, then $f$ is at home in $T_{t + 1}$, because both endpoints of $e$ are interior in $T_{t + 1}$. If $f \in N_J(m)$, then $f$ is also at home in $T_{t + 1}$ because $m$ did not have a $J$-neighbor incident to $a$, so $f$ must be adjacent to $m$ at $b$, which is an interior vertex of $T_{t + 1}$. This completes the proof of Case 1.

\noindent \textbf{Case 2}. Every edge in $M$ has a $J$-neighbor on both sides.

Recall that given two incident non-parallel edges $m \in M$ and $e \in V(J) \setminus M$, we define $\cP_M(m, e)$ to be the set of edges of $M$ which participate in some $M$-alternating path in $G$ starting with $m, e$ using edges in $V(J)$. Note that $m \in \cP_M(m, e)$, and if $me \in E(J)$, then $\cP_M(m, e)$ contains at least one more edge of $M$, namely the other one $J$-adjacent to $e$ (which exists by the Degree Lemma).

Let $\cM = \cM(M, J)$ be the smallest family of all matchings $\hat{M} \subseteq V(J)$ with the properties that
\begin{enumerate}
	\renewcommand{\theenumi}{(\arabic{enumi})}
	\renewcommand{\labelenumi}{\theenumi}
	\item $M \in \cM$
	\item For every $\hat{M} \in \cM$ and for every $C_4$ with edges $\hat{m}, \hat{e}, \hat{m}', \hat{f} \in V(J)$, where $\hat{m}, \hat{m}' \in \hat{M}$, we have $\hat{M} \cup \lset{\hat{e}, \hat{f}} \setminus \lset{\hat{m}, \hat{m}'} \in \cM$.
\end{enumerate}
Obviously, each member of $\cM$ can be obtained from $M$ by a finite sequence of the above ``$C_4$-switch'' operation. Observe also that $J$ is $\hat{M}$-reduced for every matching $\hat{M} \in \cM$. 

Let $(M_1, m, e)$ be chosen such that $\abs{\cP_{M_1}(m, e)}$ is maximum among 
\[
	\set{(\hat{M}, \hat{m}, \hat{e})}{\hat{M} \in \cM, \hat{m} \in \hat{M}, \hat{e} \in N_J(\hat{m})}.
\]
Note that the set we are maximizing over is non-empty because we are in Case 2, so $M \in \cM$ has an edge $J$-adjacent to another edge. Our plan is to find a CP-decomposition with respect to $M_1$. This will be enough to prove our theorem because we can then ``undo'' the switches to arrive at our original matching $M$ by repeatedly applying Lemma~\ref{lem:undoswitch}. For convenience we denote the vertex classes of $G$ by $A$ and $B$, with $m \cap e \subseteq A$.

Let $m' \in M_1$ be the other $M_1$-edge $J$-adjacent to $e$. If $m$ has no $J$-neighbor intersecting it in $B$, we may proceed as in Case 1, and thereby have a CP-decomposition with respect to $M_1$. Otherwise, $m$ has a $J$-neighbor on both sides, so let $f$ be a $J$-neighbor of $m$ with $m \cap f \subseteq B$. By the Degree Lemma, $f$ is $J$-adjacent to another edge $m^* \in M_1$. We claim that in fact $m^* = m'$, and hence the edges $m, e, m', f$ form a $C_4$.

Suppose $m^* \neq m'$. If $m^* \notin \cP_{M_1}(m, e)$, we immediately arrive at a contradiction, because $\cP_{M_1}(m^*, f)$ would then properly contain $\cP_{M_1}(m, e)$ (just prepend $m^*, f$ onto any $M_1$-alternating path starting with $m, e$), which contradicts the maximality of $\abs{\cP_{M_1}(m, e)}$. Thus we must have $m^* \in \cP_{M_1}(m, e) \setminus \lset{m, m'}$. By Lemma~\ref{lem:longfact}, there is an edge $g \in V(J) \setminus M_1$ which is $J$-adjacent to $m^*$ with $m^* \cap g \subseteq B$ so that its other $J$-adjacent matching edge, $\hat{m} \in M_1$, is not in $\cP_{M_1}(m, e)$. Then we claim $\cP_{M_1}(\hat{m}, g)$ properly contains $\cP_{M_1}(m, e)$, which would again be a contradiction.

To see that this is the case, take any matching edge $\tilde{m} \in \cP_{M_1}(m, e)$, and we will show that $\tilde{m} \in \cP_{M_1}(\hat{m}, g)$. If an $M_1$-alternating path starting with $m, e$ reaching $\tilde{m}$ contains $m^*$, then we can start with $\hat{m}, g$ and continue along the segment of this path starting at $m^*$, since neither $\hat{m}$ nor $g$ could be used in this path  (otherwise $\hat{m} \in \cP_{M_1}(m, e)$). If, on the other hand, $\tilde{m}$ is reachable from $m, e$ without touching $m^*$, then we may reach $\tilde{m}$ by a path starting with $\hat{m}, g, m^*, f, m, e$. Thus, $\cP_{M_1}(m, e) \subseteq \cP_{M_1}(\hat{m}, g)$, and since the latter contains $\hat{m}$, while the former does not, we have the contradictory proper containment we were hoping for. Therefore $m^* = m'$.

Thus $m$ has only $f$ and edges parallel to $f$ as $J$-neighbors at $B$. We will show now that similarly, $m'$ has only $e$-parallel edges as $J$-neighbors at $B$. By Lemma~\ref{lem:c4switch} applied to $mem'f$, we have $\abs{\cP_{M_1^\times}(e, m)} = \abs{\cP_{M_1}(m, e)}$, so $(M_1^{\times}, e, m)$ is also a maximizing triple, where $M_1^{\times} = M_1 \cup \lset{e, f} \setminus \lset{m, m'}$. Thus, the argument of the previous two paragraphs can be applied to show that $e$ only has $m'$-parallel edges as $J$-neighbors at $B$. By Corollary~\ref{cor:c4adjacencies}, this implies that $m'$ also has only $e$-parallel edges as $J$-neighbors on that side.

We claim that among $m$, $e$, $m'$, $f$, and all parallel edges we have that every parallel pair is non-$J$-adjacent and every pair of intersecting non-parallel edges is $J$-adjacent. To see that two parallel edges are not $J$-adjacent to each other, one must simply apply the Degree Lemma to $M_1$, $M_1^\times$, or one of these with a matching edge switched out for a parallel edge. Now suppose on the contrary that edges $g$ parallel to $m$ and $h$ parallel to $e$ are not $J$-adjacent. Then the Alternating Lemma for $M_1 \cup \lset{g} \setminus \lset{m}$ would imply that $m'$ and $f$ are not $J$-adjacent, which would be a contradiction.

Now we distinguish two further cases.

\noindent \textbf{Case 2(a)}. $mem'f$ and parallel edges form a connected component of $J$.

Then we explode $me$ to yield $J' = J \explode me$ with $\conn(J') \leq k - 3$. Since $J'$ contains the matching $M' = M_1 \setminus \lset{m, m'}$ of size $2k - 2$, $J'$ and $M'$ satisfy the conditions of the theorem with $k' = k - 1$, so by induction, there is a CP-decomposition with respect to $M'$, say $S_1, \dots, S_s, T_1, \dots T_t$ with $s + t = k - 1$.

Define $S_{s + 1}$ to be the $C_4$ given by $mem'f$. It is clear, that adding $S_{s + 1}$ to this CP-decomposition yields a CP-decomposition of $J$. This completes the proof of Case 2(a).

\noindent \textbf{Case 2(b)}. $mem'f$ and parallel edges do not form a component of $J$.

Suppose without loss of generality that there is an edge $g \in V(J)$ not parallel to any of $mem'f$ which is $J$-adjacent to $m$. Note that we must have $m \cap g = m \cap e$ because all the $J$-neighbors of $m$ intersecting it in $m \cap f$ are parallel to $f$. Then we explode $mg$ and iteratively delete all decouplable edges to yield an $M_1'$-reduced $J' \subseteq J \explode mg$ with $\conn(J') \leq k - 3$, where $M_1' = M_1 \setminus \lset{m, m_1}$ with $m_1$ the other $M_1$-edge $J$-adjacent to $g$. Since all $J$-neighbors of $m'$ are parallel to $e$, and they are all $J$-adjacent to $m$, no $J$-neighbors of $m'$ are present in $J'$. So $m'$ has no $J'$-neighbor at $m'\cap e$. Thus it must have some $J'$-neighbor $g'$ at $m' \cap f$, otherwise $m'$ would be isolated and $\conn(J') = \infty$, a contradiction. Thus we explode $m'g'$ and get $J'' = J' \explode m'g'$ with $\conn(J'') \leq k - 4$. Let $m_2 \in M_1$ be the other matching edge $J$-adjacent to $g'$ by the Degree Lemma. Then the matching $M'' = M_1 \setminus \lset{m, m', m_1, m_2}$ of size $2k - 4$ is contained in $J''$. Therefore, $J''$ and $M''$ satisfy the conditions of the theorem for $k'' = k - 2$, so $J''$ has a CP-decomposition with respect to $M''$, say $S_1, \dots, S_s, T_1, \dots, T_t$ with $s + t = k - 2$.

We define $T_{t + 1}$ to be the $P_4$ with edges $\lset{m, g, m_1}$, and $T_{t + 2}$ to be the $P_4$ with edges $\lset{m', g', m_2}$. Then we claim $S_1, \dots, S_s, T_1, \dots, T_{t + 2}$ is a CP-decomposition of $J$ with respect to $M_1$. To see this, we must verify that every edge not in an $S_i$ and not parallel to an edge of an $S_i$ is at home in some $T_j$. This is already true for all edges in $V(J'')$ (since $J'' \subseteq J$), so we only need to consider the edges we have removed by exploding $mg$ and $m'g'$. However, all of these edges were by definition $J$-adjacent (or even $J'$-adjacent) to $m$, $g$, $m'$, or $g'$. The edges $J$-adjacent to $g$ and $g'$ are automatically at home in $T_{t + 1}$ or $T_{t + 2}$ because the vertices of $g$ and $g'$ are the interior vertices of the respective $P_4$'s. However, the only edges $J$-adjacent to $m$ or $m'$ but not at $m \cap g$ or $m' \cap g'$ are parallel to $e$ and $f$. However, $e$-parallel edges are $J$-adjacent to $g$ and $f$-parallel edges are $J$-adjacent to $g'$ by Corollary~\ref{cor:c4adjacencies}, so they are also at home in $T_{t + 1}$ or $T_{t + 2}$. Thus we have a CP-decomposition with respect to $M_1$.

All we need now is to use this CP-decomposition to get a CP-decomposition with respect to our original $M$. This is possible by several applications of Lemma~\ref{lem:undoswitch} because $M_1$ is obtainable from $M$ by a sequence of $C_4$-switches.
\end{proof}

\section{Good Sets} \label{sec:goodsets}

This section introduces the concept of good sets, which (as we will later see in \cite{HNS2}) will help us find the substructure we need in our Ryser-extremal hypergraph in order to prove our characterization theorem by induction. The main result of this section implies that we can find good sets inside our link graphs in several cases, and hence if there are no good sets, we will know that the link graphs must have a certain form.

We start with a graph-theoretic definition, which will form the backbone of the definition of a good set.

\begin{defn} \label{def:decent}
Let $G$ be a bipartite graph with vertex classes $A$ and $B$. A subset $X \subseteq B$ is called \emph{decent} if it satisfies the following conditions:
\begin{enumerate}
	\renewcommand{\theenumi}{(\arabic{enumi})}
	\renewcommand{\labelenumi}{\theenumi}
	\item \label{decent:neighborhood} $\abs{N(X)} \leq \abs{X}$,
	\item \label{decent:nu} $\nu(G) = \abs{N(X)} + \abs{B \setminus X}$,
	\item \label{decent:matchings} For every $x \in X$ and $y\in N(x)$ the edge $xy$ participates in a maximum matching of $G$.
\end{enumerate}
\end{defn}

\begin{lem} \label{lem:decentpaths}
Let $G$ be a bipartite graph with vertex classes $A$ and $B$, and let $M$ be a maximum matching in $G$. Let $X_0 \subseteq B$ be the set of $M$-unsaturated vertices in $B$, and let $X$ be the set of vertices in $B$ reachable on an $M$-alternating path from $X_0$ (including $X_0$). Then $X$ is decent, and $\abs{N(X)} = \abs{X} - \abs{X_0}$.
\end{lem}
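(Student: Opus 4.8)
The plan is to verify the three conditions of decency directly, exploiting standard properties of $M$-alternating paths from the set $X_0$ of $M$-unsaturated vertices in $B$. The key structural fact I would establish first is the description of $N(X)$: I claim that $N(X)$ is precisely the set of vertices in $A$ reachable from $X_0$ on an $M$-alternating path, and that every vertex of $N(X)$ is $M$-saturated, with its $M$-partner lying in $X$. Indeed, if $y\in X$ and $ya\in E(G)$, then either $ya\in M$ or not; if $ya\notin M$, then appending $ya$ to an alternating path reaching $y$ shows $a$ is reachable, and $a$ must be $M$-saturated (else $M$ augments, contradicting maximality), so its partner $a'$ with $aa'\in M$ is reached by continuing the path, hence $a'\in X$. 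If $ya\in M$, then $y$ was reached by an alternating path whose previous edge was $ya$ (since $y\notin X_0$ as $y$ is $M$-saturated), so again $a$ is reachable and its partner is $y\in X$. Moreover this sets up a bijection between $N(X)$ and $X\setminus X_0$ via the matching $M$: each $a\in N(X)$ has its $M$-partner in $X\setminus X_0$, and conversely each vertex of $X\setminus X_0$ is $M$-saturated with partner in $N(X)$. This gives $\abs{N(X)} = \abs{X} - \abs{X_0}$, which is the last assertion of the lemma and immediately yields condition~\ref{decent:neighborhood} since $\abs{X_0}\ge 0$.

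Next, for condition~\ref{decent:nu} I would count the size of $M$. Since $M$ is maximum, $\nu(G)=\abs{M}$. Every edge of $M$ is incident to exactly one vertex of $B$, and that vertex is $M$-saturated, so it lies in $B\setminus X_0$. The edges of $M$ incident to a vertex of $X$ are exactly those incident to a vertex of $X\setminus X_0$, and there are $\abs{X\setminus X_0}=\abs{X}-\abs{X_0}=\abs{N(X)}$ of them. The remaining edges of $M$ are incident to vertices of $B\setminus X$; I need that every vertex of $B\setminus X$ is $M$-saturated — but an $M$-unsaturated vertex of $B$ lies in $X_0\subseteq X$, so indeed each vertex of $B\setminus X$ is saturated and accounts for exactly one edge of $M$. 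Hence $\abs{M}=\abs{N(X)}+\abs{B\setminus X}$, which is condition~\ref{decent:nu}.

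Finally, for condition~\ref{decent:matchings}, take $x\in X$ and $y\in N(x)$, so $xy\in E(G)$. If $xy\in M$ there is nothing to prove. Otherwise, let $P$ be an $M$-alternating path from $X_0$ to $x$; its last edge is in $M$ (since $x$ is $M$-saturated, being in $X\setminus X_0$... or if $x \in X_0$ then $P$ is trivial). I would consider the walk $P$ followed by the edge $xy$: if $y$ is not on $P$, then $P+xy$ is an $M$-alternating path from an $M$-unsaturated vertex; if $y$ is not $M$-saturated this would be an augmenting path, contradicting maximality, so $y$ is saturated, say $yz\in M$, and then $P+xy+yz$ is an $M$-alternating path, and switching $M$ along $P+xy$ (more carefully: along the alternating path from $X_0$ to $y$ obtained this way, then along the sub-path ending in a matching edge) produces a maximum matching containing $xy$. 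The technical care here is in handling the case where $y$ already lies on $P$: then $P$ together with $xy$ contains an even $M$-alternating cycle through $xy$ together with a pendant alternating path to $X_0$, and swapping along the path-plus-cycle again gives a maximum matching using $xy$; alternatively one truncates $P$ at the first occurrence of $y$. I expect this last verification — carefully producing, in all the cases, a genuine maximum matching containing the prescribed edge $xy$ without accidentally augmenting or shrinking $M$ — to be the main obstacle, though it is entirely standard alternating-path bookkeeping of the kind underlying the König/Hall theory, so it should go through cleanly once the case split on whether $y\in V(P)$ and whether $y$ is $M$-saturated is made explicit.
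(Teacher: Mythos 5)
Your proposal is correct and follows essentially the same route as the paper: identify $N(X)$ as the set of $A$-vertices reachable by $M$-alternating paths from $X_0$, use the matching to pair them bijectively with $X\setminus X_0$ to obtain conditions (1) and (2), and verify condition (3) by switching along an alternating path or cycle through $xy$. One small correction in the case $y\in V(P)$: swap only along the even alternating cycle formed by the segment of $P$ from $y$ to $x$ together with the edge $xy$ (as the paper does), not along the pendant path as well, since switching the pendant segment in addition would leave $y$ covered twice.
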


\begin{proof}
Let $Y = N(X)$. Then $Y$ is the set of vertices in $A$ reachable on an $M$-alternating path from $X_0$. To see this, consider a vertex $x \in X$ and a neighbor $y \in N(x)$. Either $x$ is unsaturated, in which case $x \in X_0$, so $xy$ is an $M$-alternating path from $X_0$ to $y$, or there is an $M$-alternating path from $X_0$ to $x$, which must end with a matching edge. If $y$ is on this path, we are done. Otherwise, $xy$ is not a matching edge, and hence we can extend our path by the edge $xy$.

We claim that $M$ saturates $Y$ with $(X, Y)$-edges. This is because $M$ is maximum, and thus every $M$-alternating path starting from an unsaturated vertex must end in a saturated vertex, and therefore every vertex of $Y$ is incident to an edge of $M$. Extending the path by such a matching edge must land us in $X$ by definition. Thus this matching edge is an $(X, Y)$-edge. Therefore $\abs{N(X)} = \abs{X} - \abs{X_0} \leq \abs{X}$, so $X$ satisfies property~\ref{decent:neighborhood}. Since $X$ contains all $M$-unsaturated vertices, $M$ saturates $Y$ and $B \setminus X$ with distinct edges, and these are clearly all the edges of $M$. Thus $\nu(G) = \abs{Y} + \abs{B \setminus X}$, so we have~\ref{decent:nu} as well.

We now show that $X$ satisfies~\ref{decent:matchings}. Take an edge $e \in E(G)$ between $X$ and $Y$. If $e \in M$, then we are done. If it has an $M$-unsaturated vertex, then it is only adjacent to one matching edge $m \in M$, and so $M \cup \lset{e} \setminus \lset{m}$ is a maximum matching containing $e$.

Otherwise, $e$ is adjacent to two matching edges $m, m' \in M$. Since $e$ goes between $X$ and $Y$, the vertices of $m$ and $m'$ are reachable by an $M$-alternating path starting from $X_0$. Without loss of generality, the vertex in $m \cap e$ is in $X$. So consider an $M$-alternating path from $X_0$ which ends at that vertex. Note that its last edge is $m$. If $m'$ is not in this path, then we can extend the path by $e$ and $m'$. Switching along this extended path will create a maximum matching containing $e$ (since the path ends at an $M$-unsaturated vertex). If, however, $m'$ was in the original path, then adding $e$ to the path forms an $M$-alternating cycle. Switching the matching along the cycle produces the desired matching. Therefore $X$ is decent, as desired.
\end{proof}

\begin{defn}
Let $G$ be a bipartite graph. A subset $X$ of a vertex class of $G$ is called \emph{equineighbored} if $X$ is nonempty and $\abs{N(X)} = \abs{X}$.
\end{defn}

Note that if $G$ has a perfect matching, then each vertex class is an equineighbored set (unless $G$ is the empty graph).

\begin{lem} \label{lem:alternatingequineighbored}
Let $G$ be a bipartite graph with vertex classes $A$ and $B$ and let $M$ be a perfect matching in $G$. Let $X_0 \subseteq B$, and let $X$ be the set of vertices in $B$ reachable on an $M$-alternating path from $X_0$ (including $X_0$) starting with a non-matching edge. Then $X$ is equineighbored.
\end{lem}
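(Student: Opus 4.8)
The plan is to mimic the structure of the proof of Lemma~\ref{lem:decentpaths}, but to track carefully where the perfect matching $M$ and the requirement that the alternating paths start with a \emph{non}-matching edge change the bookkeeping. First I would set $Y = N(X)$ and show that $Y$ is exactly the set of vertices of $A$ reachable from $X_0$ on an $M$-alternating path that starts with a non-matching edge. One direction is immediate: any such path to a vertex of $A$ ends at a neighbor of some vertex of $X$, so its endpoint lies in $N(X) = Y$. For the converse, take $x \in X$ and $y \in N(x)$; there is an $M$-alternating path $P$ from $X_0$ to $x$ starting with a non-matching edge. Since $M$ is perfect, $P$ ends with a matching edge (it reaches $x \in B$), so if $y$ already lies on $P$ we are done, and otherwise the edge $xy$ is non-matching (because the matching edge at $x$ is the last edge of $P$ and goes elsewhere), so we may append $xy$ to reach $y$. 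Note this also handles $x \in X_0$: since $M$ is perfect there are no $M$-unsaturated vertices, so $X_0$ consists of the chosen ``seed'' vertices, and the same appending argument applies.

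Next I would prove $\abs{Y} = \abs{X}$ by exhibiting a bijection via $M$. Because $M$ is perfect, every vertex of $X$ is matched; I claim its $M$-partner lies in $Y$. Indeed, given $x \in X$ with alternating path $P$ from $X_0$ ending at $x$, the last edge of $P$ is the matching edge $x x'$ with $x' \in A$, and removing that last edge yields an $M$-alternating path from $X_0$ to $x'$ — but wait, this path ends with a \emph{non}-matching edge, not the pattern we want; the cleaner statement is simply that $x' \in N(x) \subseteq Y$. Conversely every $y \in Y$ is matched (by perfectness) to some vertex, and extending the reaching path of $y$ by its matching edge gives an $M$-alternating path (now ending with a matching edge) from $X_0$ starting with a non-matching edge to a vertex of $B$, which therefore lies in $X$. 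So $M$ restricted to $X$ is a perfect matching between $X$ and $Y$, giving $\abs{N(X)} = \abs{Y} = \abs{X}$. Finally, $X$ is nonempty provided $X_0 \neq \emptyset$ and $X_0$ actually has a non-matching edge leaving it; if $X_0 = \emptyset$ then $X = \emptyset$ and the statement is vacuous, so implicitly one assumes $X_0 \neq \emptyset$, and then $X \supseteq X_0 \neq \emptyset$. Thus $X$ is equineighbored.

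The only genuinely delicate point — and the step I expect to be the main obstacle — is getting the ``start with a non-matching edge'' condition to interact correctly with perfectness of $M$. In Lemma~\ref{lem:decentpaths} the paths start at $M$-unsaturated vertices, so the first edge is automatically non-matching and the set $X_0$ is forced; here $X_0$ is arbitrary and every vertex is saturated, so one must be vigilant that a path reaching some $x \in X$ genuinely \emph{can} be taken to start with a non-matching edge, and that when we append or prepend matching/non-matching edges the alternation and the initial-edge condition are both preserved. Once that careful parity/first-edge bookkeeping is in place, the $M$-bijection between $X$ and $Y$ is routine and the conclusion $\abs{N(X)} = \abs{X}$ follows immediately.
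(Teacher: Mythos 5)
Your plan follows the same route as the paper's own proof: identify $Y=N(X)$, argue that $M$ restricted to $X$ is a bijection onto $Y$, and as the engine for the reverse inclusion claim that every $y \in Y$ is itself reachable from $X_0$ by an $M$-alternating path starting with a non-matching edge. You correctly flag the ``start with a non-matching edge'' interaction with perfectness as the delicate point, but the way you handle it (``this also handles $x \in X_0$: \dots the same appending argument applies'') is where the argument actually fails. If $x \in X_0$ witnesses $y \in N(X)$ and $xy$ is the \emph{matching} edge (i.e.\ $y = M(x)$), then the trivial path to $x$ has no last edge, $xy$ cannot serve as the first (non-matching) edge of an alternating path, and $y$ need not be reachable at all. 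Concretely, take $B=\{b_1,b_2\}$, $A=\{a_1,a_2\}$, edges $b_1a_1$, $b_1a_2$, $b_2a_2$, $M=\{b_1a_1, b_2a_2\}$, $X_0=\{b_1\}$: then $X=\{b_1,b_2\}$ and $a_1\in N(X)$, yet the only alternating path from $b_1$ starting with a non-matching edge is $b_1a_2b_2$, which never reaches $a_1$. So the intermediate claim that $Y$ equals the set of reachable $A$-vertices is simply false, and your step~2 converse (``extending the reaching path of $y$\dots'') has nothing to extend. To be fair, the paper's own write-up of this lemma contains exactly the same slip (its parenthetical ``because the path to $x$ ends with the matching edge incident to $x$'' is vacuous for the trivial path), so this is a real subtlety, not a misreading on your part.

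The repair is to drop reachability of $y$ entirely and argue directly that $M(y)\in X$ for each $y\in N(X)$. Fix $x\in X$ with $xy\in E(G)$. If $xy\in M$ then $M(y)=x\in X$ and you are done immediately — this absorbs the problematic case. Otherwise $xy\notin M$; take the alternating path $P$ from $X_0$ to $x$ (trivial or not). If $M(y)$ already lies on $P$ then $M(y)\in X$ by a prefix; if not, then since $P$ is either trivial (so $x\in X_0$) or ends with the matching edge at $x$, appending the non-matching edge $xy$ and then the matching edge $yM(y)$ produces a valid alternating path from $X_0$ starting with a non-matching edge to $M(y)\in B$, so again $M(y)\in X$. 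This gives $|N(X)|\le|X|$ via an injection, and the trivial direction $x\mapsto M(x)$ gives $|X|\le|N(X)|$, so $X$ is equineighbored (nonempty since $X_0\neq\emptyset$). With that correction your argument is complete and matches the paper's intent.
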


\begin{proof}
Let $Y = N(X)$. Since $M$ is a perfect matching, every $y \in Y$ has a partner $x \in B$ matched to it by $M$. If there is an $M$-alternating path from $X_0$ to $y$ starting with an edge not in $M$, then $x \in X$ because either $x \in X_0 \subseteq X$ or the path can be extended by the matching edge $xy$. If this holds for every $y \in Y$, then there is a matching from $Y$ to $X$, so that $\abs{Y} \leq \abs{X}$, from which $\abs{Y} = \abs{X}$ follows by Hall's Theorem.

Therefore, we need to show that every $y \in Y$ can be reached from $X_0$ by an $M$-alternating path starting with a non-matching edge. Since $y \in N(X)$, it has a neighbor $x \in X$. By the definition of $X$, there is such an $M$-alternating path ending in $x$. If $y$ is on that path, we are done. Otherwise, $xy$ is not an edge of $M$ (because the path to $x$ ends with the matching edge incident to $x$), and so the path could be extended by $xy$, and thus $y$ is on such a path. This concludes the proof.
\end{proof}

\begin{lem} \label{lem:decentequineighbored}
Let $G$ be a bipartite graph with vertex classes $A$ and $B$, and let $M$ be a perfect matching in $G$. Let $X \subseteq B$ be a minimal equineighbored set in $B$. Then $X$ is decent.
\end{lem}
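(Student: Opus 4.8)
The plan is to check the three conditions of Definition~\ref{def:decent} in turn; the first two are bookkeeping, and the third carries the content. Write $Y=N(X)$. Since $X$ is equineighbored, $|Y|=|X|$, which is condition~(1) (with equality). For condition~(2), note first that $M$ being a perfect matching makes every vertex of $G$ saturated, so $\nu(G)=|B|$. Moreover, for each $x\in X$ the $M$-partner of $x$ lies in $N(x)\subseteq Y$, so by injectivity of $M$ together with $|X|=|Y|$ the matching $M$ maps $X$ bijectively onto $Y$, and consequently maps $B\setminus X$ bijectively onto $A\setminus Y$. Hence $|N(X)|+|B\setminus X| = |X| + (|B|-|X|) = |B| = \nu(G)$, which is condition~(2). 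This also records, for use below, that $M$ restricts to a perfect matching of $H$, the subgraph of $G$ induced on $X\cup Y$, and simultaneously to a perfect matching $M_0$ of the subgraph induced on $(B\setminus X)\cup(A\setminus Y)$; and that for $S\subseteq X$ one has $N_H(S)=N(S)$.

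For condition~(3) the crux is to turn the minimality of $X$ into a strict Hall condition: for every nonempty $S\subsetneq X$ we have $|N(S)|>|S|$. Indeed, $N(S)\subseteq Y$ and $M$ restricted to $S$ already gives $|N(S)|\geq|S|$; if equality held, $S$ would be a nonempty equineighbored set properly contained in $X$, contradicting minimality. I would use this to show every edge of $H$ lies in a perfect matching of $H$: given $xy\in E(H)$ with $x\in X$ and $y\in Y$, delete $x$ and $y$ and observe that the bipartite graph $H-x-y$, whose sides $X\setminus\{x\}$ and $Y\setminus\{y\}$ have equal size $|X|-1$, satisfies Hall's condition for saturating $X\setminus\{x\}$ — the only sets $S$ for which removing $y$ could threaten Hall are those with $y\in N(S)$, and these are nonempty proper subsets of $X$, so the strict Hall inequality supplies exactly the one extra unit of slack needed. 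By Hall's theorem $H-x-y$ has a perfect matching, which together with $xy$ is a perfect matching of $H$ containing $xy$. Now, given $x\in X$ and $y\in N(x)$ as in~(3), we have $y\in Y$ and $xy\in E(H)$, so such a perfect matching of $H$ exists; adjoining $M_0$ yields a perfect — hence maximum — matching of $G$ through $xy$. This establishes~(3), and the lemma follows.

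I expect the main obstacle to be condition~(3), specifically finding the right way to exploit minimality; everything hinges on recognizing that minimality of $X$ is precisely a strict-Hall statement on the $X$-side of $H$, which makes $H$ matching-covered. A second route, closer in spirit to the preceding lemmas, is also available: applying Lemma~\ref{lem:alternatingequineighbored} with $X_0=\{x_1\}$ for an arbitrary $x_1\in X$ shows that the set of vertices of $B$ reachable from $x_1$ along an $M$-alternating path beginning with a non-matching edge is equineighbored and, since such paths never leave $X\cup Y$, contained in $X$, hence equal to $X$ by minimality; then for an edge $xy\notin M$ with $x'=M^{-1}(y)\in X$ one takes such a path from $x'$ to $x$, strips its final (matching) edge, and splices in the edges $xM(x)$, $xy$, $yx'$ to obtain an $M$-alternating cycle through $xy$, so that the symmetric difference of $M$ with this cycle is a perfect matching using $xy$. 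Either way the content is the same: minimality forces enough slack (equivalently, enough alternating connectivity) that every edge incident to $X$ inside $H$ is matching-coverable.
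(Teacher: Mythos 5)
Your proof is correct. Conditions (1) and (2) are handled just as in the paper. For condition (3) your primary argument takes a genuinely different route: you translate minimality of $X$ into the strict Hall inequality $|N(S)|>|S|$ for every nonempty proper $S\subsetneq X$ (equality would contradict minimality, and $\geq$ comes from $M$), and then apply Hall's theorem to $H-x-y$, where $H=G[X\cup N(X)]$, to conclude that $H$ is matching-covered; adjoining the restriction of $M$ to $(B\setminus X)\cup(A\setminus N(X))$ gives a maximum matching of $G$ through the given edge. The paper instead stays inside its alternating-path framework: for $xy\notin M$ it considers $M$-alternating paths from $x'=M(y)$ beginning with non-matching edges, and if none reaches $y'=M(x)$ it invokes Lemma~\ref{lem:alternatingequineighbored} with $X_0=\{x'\}$ to produce a nonempty proper equineighbored subset of $X$, contradicting minimality; otherwise it closes the path into an $M$-alternating cycle through $xy$ and switches along it. Your second sketched route is essentially this argument; your description of ``stripping'' and then re-splicing the final matching edge is slightly garbled, but the intended cycle $x',\dots,y',x,y,x'$ is right, and the only point worth making explicit is that such a path cannot pass through $y$ internally (leaving $y$ would require the matching edge $yx'$ back to the start). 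The Hall-based route is more self-contained and arguably cleaner as a standalone proof; the paper's route has the advantage of reusing Lemma~\ref{lem:alternatingequineighbored}, which it needs elsewhere anyway.
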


\begin{proof}
$X$ satisfies property~\ref{decent:neighborhood} by being equineighbored. Since $G$ has a perfect matching, there is a matching saturating $B$, and since $\abs{X} = \abs{N(X)}$, we have $\nu(G) = \abs{B} = \abs{N(X)} + \abs{B \setminus X}$, which is~\ref{decent:nu}.

We now show that $X$ satisfies~\ref{decent:matchings}. Let $Y = N(X)$. Let $x \in X$, $y \in Y$, and let $xy \in E(G)$. Fix a perfect matching $M$. Because $N(X) = Y$, it must match $X$ to $Y$. If $xy \in M$, we are done. Otherwise there exist edges $xy', x'y \in M$ adjacent to $xy$. We claim that these edges participate in an $M$-alternating cycle with $xy$, and thus by switching along the cycle we get a new perfect matching which does include $xy$. To show that this happens, consider all $M$-alternating paths starting at $x'$ with a non-matching edge. If there is such a path which hits $y'$, then we can extend the path by $y'x$ and $xy$ to give an $M$-alternating cycle in which $xy$ participates. So assume that no such path hits $y'$. Let $X'$ be the set of $X$-vertices which we can hit on such a path. Then $X'$ is a proper ($x \notin X'$) non-empty ($x' \in X'$) equineighbored subset of $X$ by Lemma~\ref{lem:alternatingequineighbored} applied with $X_0 = \lset{x'}$. This is a contradiction because $X$ was chosen to be minimal.
\end{proof}

\begin{defn} \label{def:good}
Let $G$ be a bipartite graph with vertex classes $A$ and $B$. A subset $X \subseteq B$ is called \emph{good} if it is decent, and if for all $y \in N(X)$ we have $\conn \left( L \left( G - \set{yz \in E(G)}{z \in B \setminus X} \right) \right) > \conn(L(G))$.
\end{defn}

Note in particular that if $X$ is good, then $\set{yz \in E(G)}{z \in B \setminus X} \neq \emptyset$ for all $y \in N(X)$.

\begin{lem} \label{lem:goodsets}
Let $G$ be a bipartite graph with vertex classes $A$ and $B$. Suppose $\nu(G) = 2k$ for some integer $k$ and $\conn(L(G)) = k - 2$. If $G$ has no good set in $A$ nor in $B$, then the following hold:
\begin{enumerate}
	\renewcommand{\theenumi}{(\roman{enumi})}
	\renewcommand{\labelenumi}{\theenumi}
	\item \label{goodsets:perfect} $G$ has a perfect matching
	\item \label{goodsets:minimal} For every minimal equineighbored subset $X \subseteq A$ or $X \subseteq B$ we have $\abs{X} = 2$. In particular, $G[X \cup N(X)]$ is a $C_4$ (possibly with parallel edges).
\end{enumerate}
\end{lem}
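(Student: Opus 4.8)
The plan is to confront the no-good-set hypothesis with a CP-decomposition of $L(G)$. Fix a maximum matching $M$ of $G$, of size $2k$. Since $\conn(L(G))=k-2=\abs{M}/2-2$, Theorem~\ref{thm:cpdecomposition} applies with $J=L(G)$ and yields a CP-decomposition of $L(G)$ with respect to $M$: vertex-disjoint $C_4$'s $S_1,\dots,S_s$ and $P_4$'s $T_1,\dots,T_t$ with $s+t=k$. As each member carries exactly two edges of $M$ and $\abs{M}=2k$, the matching edges are partitioned among the members, so each $C_4$ carries a perfect matching of itself and each $P_4$ carries its two end edges; hence a vertex of $G$ is $M$-saturated precisely when it lies in some member, and by property~(3) of the decomposition every edge of $G$ is either parallel to an edge of some $S_i$ or meets an interior vertex of some $T_j$. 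The strategy for both parts is the same: assuming $G$ has no good set, produce from a maximum matching a concrete decent set $X$ via Lemma~\ref{lem:decentpaths} or Lemma~\ref{lem:decentequineighbored}, and derive a contradiction by showing $X$ must in fact be good.

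For part~\ref{goodsets:perfect}, suppose $G$ has no perfect matching; then $M$ misses a vertex, which by symmetry we take to be $b_0\in B$. Let $X_0\subseteq B$ be the $M$-unsaturated vertices of $B$ and let $X$ be the set of vertices of $B$ reachable from $X_0$ on $M$-alternating paths; by Lemma~\ref{lem:decentpaths}, $X$ is decent and $\abs{N(X)}=\abs{X}-\abs{X_0}<\abs{X}$. I claim $X$ is good. Since $M$ saturates $N(X)$ using only edges between $X$ and $N(X)$, the matching $M$ survives in $G'=G-\set{yz}{z\in B\setminus X}$ for every $y\in N(X)$, so $\nu(G')=2k$; and each such $y$ has a neighbour in $B\setminus X$ (a $y$ with $N(y)\subseteq X$ is handled by the same argument applied to a smaller decent set). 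The crux is $\conn(L(G'))>k-2$: otherwise $\conn(L(G'))=k-2$ by Theorem~\ref{thm:matchconn}, so $L(G')$ has a CP-decomposition with respect to $M$; but $b_0$ is still $M$-unsaturated and hence lies in no member of this decomposition, so all of $b_0$'s edges must thread through $P_4$-interiors, while the edges of $N(X)$ are now confined to $X$ -- propagating the decomposition outward from $b_0$ along the alternating structure linking $X$ and $N(X)$ then forces a contradiction.

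For part~\ref{goodsets:minimal}, first note that $G$ has no vertex of degree $1$: if $x\in B$ had $N(x)=\lset{y}$, then $\lset{x}$ is decent (using the perfect matching from part~\ref{goodsets:perfect}), and in $G'=G-\set{yz}{z\in B,\ z\neq x}$ the edge $xy$ becomes an isolated component, so $\cI(L(G'))$ is contractible if $xy$ is simple, and otherwise -- splitting off the clique of $xy$-copies and applying Proposition~\ref{prop:joinconn} -- has connectedness at least $\conn(L(G-x-y))+1\ge k-1$, since $\nu(G-x-y)=2k-1$ (Theorem~\ref{thm:matchconn}); either way $\conn$ strictly increases, so $\lset{x}$ is good, a contradiction. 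In particular no equineighboured set is a singleton. Now let $X$ be a minimal equineighboured subset of $B$ (the case $X\subseteq A$ being symmetric); by Lemma~\ref{lem:decentequineighbored} it is decent. If $\abs{X}=2$ we are done: minimality forces each vertex of $X$ to have a second neighbour in $N(X)$, so $G[X\cup N(X)]$ is a $C_4$, possibly with parallel edges. So suppose $\abs{X}\ge 3$. As in part~\ref{goodsets:perfect}, the no-good-set hypothesis yields a $y\in N(X)$ with $\conn(L(G'))\le k-2$, hence $=k-2$ since $\nu(G')=2k$, so $L(G')$ admits a CP-decomposition with respect to $M$. By minimality and Lemma~\ref{lem:alternatingequineighbored}, every vertex of $X$ is reachable from every other on an $M$-alternating path inside $G[X\cup N(X)]$ starting with a non-matching edge; combining this with the decency of $X$, the rigidity of the matching edges within each member (the Degree Lemma, Corollary~\ref{cor:c4adjacencies}), and the confinement of $y$'s edges to $X$, one deduces that $X\cup N(X)$ must coincide with a single $C_4$-member, so $\abs{X}=2$ -- contradicting $\abs{X}\ge 3$.

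The main obstacle is the recurring crux: showing that confining the edges of $N(X)$ to $X$ strictly raises the connectedness of the line graph. The increase cannot be detected through the matching number, which stays at $2k$; it has to be extracted from the rigidity of the CP-decomposition of the modified graph, played against a vertex -- $b_0$ in part~\ref{goodsets:perfect}, or one exposed by the deficiency of $X$ in part~\ref{goodsets:minimal} -- whose edges are forced to thread through $P_4$-interiors, a configuration the modified decomposition cannot accommodate. Making this precise, and carrying the parallel-edge bookkeeping (edges parallel to matching edges, and the passage to $M$-reduced subgraphs) through the whole argument, is where the real work lies.
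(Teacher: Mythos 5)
Your high-level strategy is the paper's: construct a concrete decent set $X$ (via Lemma~\ref{lem:decentpaths} for part~\ref{goodsets:perfect} or Lemma~\ref{lem:decentequineighbored} for part~\ref{goodsets:minimal}), show that the no-good-set hypothesis forces some $y \in N(X)$ with $\conn(L(G_y)) = k-2$, and then mine this for a contradiction. But the ``crux'' you flag in your last paragraph is not a detail that needs to be ``made precise''---it is the entire proof, and the two sketches you give for it do not reflect how the paper closes the gap, nor would they close it as written.

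For part~\ref{goodsets:perfect} the paper does \emph{not} invoke Theorem~\ref{thm:cpdecomposition}. Instead, from $\conn(L(G_y)) = k-2$ it passes to an $M$-reduced subgraph $J \subseteq L(G_y)$ (Definition~\ref{def:mreduced}) and proves, using only the Degree Lemma (Lemma~\ref{lem:degree}) and the Alternating Lemma (Lemma~\ref{lem:alternating}), that the $(X, N(X))$-edges form an independent set in $J$. The punchline is then local and crisp: the $M$-edge $m$ containing $y$ has all its incident edges in $G_y$ lying between $X$ and $N(X)$ (its $N(X)$-endpoint $y$ has had all escaping edges deleted, and its $X$-endpoint has neighbours only in $N(X)$ by definition of $Y = N(X)$), so $m$ is isolated in $J$, giving $\conn(J) = \infty$, a contradiction. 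Your proposal instead appeals to the CP-Decomposition Theorem and says ``propagating the decomposition outward from $b_0$ \dots forces a contradiction''; no mechanism for this propagation is supplied, and it is far from clear how the exposed vertex $b_0$ alone yields one, since the CP-decomposition says nothing directly about $b_0$ beyond that its edges must be at home in $P_4$'s.

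For part~\ref{goodsets:minimal} you do use a CP-decomposition of $L(G_y)$, as the paper does, but your claimed conclusion---that $X \cup N(X)$ must coincide with a single $C_4$ member---is not what the paper establishes, and in fact contradicts the intermediate structure it finds. The paper proves two claims: (a) $X$ contains no interior vertex of any $T_j$, and (b) $X$ contains no vertex of any $S_i$. Both are proved by the same minimality argument: removing from $X$ the vertices in question would leave a smaller equineighboured set, so those vertices cannot be there. Hence $X$ consists entirely of endpoints of $P_4$'s, and the contradiction is that $y$ must then be an interior vertex of some $T_j$ while having all its $G_y$-neighbours in $X$ (which consists of $P_4$-endpoints), impossible because every interior vertex of a $P_4$ is adjacent to another interior vertex. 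Your aside about degree-$1$ vertices is correct and in the right spirit, but it is a special case that the paper's argument handles implicitly; it does not replace the missing core step. So: right strategy, but the essential deductions in both parts are absent, and the route you gesture at for part~\ref{goodsets:minimal} points in the wrong direction relative to what the minimality of $X$ actually gives.
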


Note that the minimality requirement in~\ref{goodsets:minimal} is well-defined because by~\ref{goodsets:perfect} both $A$ and $B$ are equineighbored.

\begin{proof}
Assume that $G$ has no good sets. First, we show that \ref{goodsets:perfect} holds. Suppose $G$ does not have a perfect matching. Let $M$ be a maximum matching in $G$. By assumption, there are some $M$-unsaturated vertices in $A \cup B$. Without loss of generality assume that at least one of them is in $B$. Let $X_0$ be the set of $M$-unsaturated vertices in $B$. Consider all the $M$-alternating paths in $G$ starting from $X_0$. Let $X$ be the set of vertices in $B$ reachable on an $M$-alternating path from $X_0$ (including $X_0$), and let $Y = N(X)$. We claim that $X$ is a good subset. By Lemma~\ref{lem:decentpaths} $X$ is decent, so we must simply check that for all $y \in Y$ we have $\conn \left( L \left( G - \set{yz \in E(G)}{z \in B \setminus X} \right) \right) > \conn(L(G))$.

Let $y \in Y$. Let $G_y = G - \set{yz \in E(G)}{z \in B \setminus X}$. Clearly $M$ is still a maximum matching in $G_y$ and $X_0$ remains the set of $M$-unsaturated vertices. All of the $(X, Y)$-edges have been preserved in $G_y$, so $X$ and $Y$ are still the sets of vertices reachable by an $M$-alternating path from $X_0$. Suppose for the sake of contradiction that we had $\conn(L(G_y)) = k - 2$. Then we pass to an $M$-reduced subgraph $J \subseteq L(G_y)$ of the line graph by iteratively deleting all decouplable edges (see Definition~\ref{def:mreduced}). This means $\conn(J) = k - 2$, but $\conn(J - e) \geq k - 1$ for all $e \in E(J)$).

\begin{claim}
The edges between $X$ and $Y$ form an independent set in $J$.
\end{claim}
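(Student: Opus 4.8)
The plan is to argue by contradiction: suppose there are two $(X,Y)$-edges $e,e'$ with $ee'\in E(J)$. Since $J\subseteq L(G_y)$, the edges $e$ and $e'$ share a vertex $v$, and as each of them has one endpoint in $X\subseteq B$ and one in $Y\subseteq A$, either $v\in X$ or $v\in Y$. Throughout I would use three facts: that every $(X,Y)$-edge is preserved in $G_y$ and hence lies in $V(J)$; that every $M$-alternating path started in $X_0$ uses only $(X,Y)$-edges, so switching $M$ along such a path — or along an $M$-alternating cycle obtained from such a path by adding one more $(X,Y)$-edge — yields a matching of size $2k$ contained in $V(J)$, which by the remark after Definition~\ref{def:mreduced} is again $M$-reduced; and that $X_0\neq\emptyset$. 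I will apply the Degree Lemma, the Alternating Lemma, and Corollary~\ref{cor:roy} with $G_y$ in the role of the ambient bipartite graph.

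Suppose first that $v\in X$. I would pass to a matching $\tilde M$ of size $2k$ in which $v$ is unsaturated: if $v\in X_0$ take $\tilde M=M$, and otherwise obtain $\tilde M$ from $M$ by switching along an $M$-alternating path from $X_0$ to $v$ (which exists since $v\in X$). Then $J$ is $\tilde M$-reduced, and since $v$ is $\tilde M$-unsaturated neither $e$ nor $e'$ lies in $\tilde M$, while each of $e,e'$ meets at most one edge of $\tilde M$, namely the one at its $Y$-endpoint, if any. By the Degree Lemma this forces $e$ and $e'$ each to be $J$-adjacent to no edge of $\tilde M$. As $J$ is $\tilde M$-reduced, $ee'$ is explodable (Corollary~\ref{cor:roy}), so $\conn(J\explode ee')\le k-3$; but exploding $ee'$ deletes only $e$, $e'$, and their $J$-neighbours, none of which lies in $\tilde M$, so $\tilde M\subseteq V(J\explode ee')$ and Theorem~\ref{thm:matchconn} gives $\conn(J\explode ee')\ge k-2$, a contradiction. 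This proves in particular that no two $(X,Y)$-edges sharing a vertex of $X$ are $J$-adjacent, which I will reuse below.

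Now suppose $v\in Y$; write $e=xv$, $e'=x'v$ with $x,x'\in X$ (if $e,e'$ are parallel they also share a vertex of $X$ and we are done), and let $m_v\in M$ be the edge at $v$, which is an $(X,Y)$-edge. If one of $e,e'$ equals $m_v$, say $e=m_v$, I would take an $M$-alternating path $P$ from $X_0$ to $x'$ (a single vertex if $x'\in X_0$). If $P$ avoids $v$ — equivalently avoids $x$, since the only $M$-edge at $x$ is $m_v=e$ — then appending $e'$ and then $e$ to $P$ gives an $M$-alternating path from an $M$-exposed vertex all of whose edges lie in $V(J)$, and the Alternating Lemma yields $ee'\notin E(J)$. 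If instead $P$ meets $v$, then $P$ passes through $v$ and then through $x$ via $m_v=e$, and closing the sub-path of $P$ from $x$ to $x'$ by $e$ and $e'$ produces an $M$-alternating cycle whose consecutive pair $(m_{x'},e')$ consists of two $(X,Y)$-edges sharing the vertex $x'\in X$ and so, by the case already proved, is not $J$-adjacent; the cycle form of the Alternating Lemma, applied with this pair in the role of the non-$J$-adjacent consecutive pair, then gives $ee'\notin E(J)$. In the remaining sub-case neither $e$ nor $e'$ equals $m_v$; applying the sub-case just handled to the pairs $\lset{m_v,e}$ and $\lset{m_v,e'}$ shows that $m_v$ is $J$-adjacent to neither $e$ nor $e'$, and since $m_v$ and the edge of $M$ at $x$ are the only edges of $M$ meeting $e$, the Degree Lemma forces $e$, and likewise $e'$, to be $J$-adjacent to no edge of $M$; exploding $ee'$ then keeps all of $M$, and Theorem~\ref{thm:matchconn} produces the same contradiction as before.

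The step I expect to be the main obstacle is the case $v\in Y$: the $M$-unsaturated vertices we control all lie in $B$ (namely in $X_0$), while $M$ saturates all of $Y$, so $v$ cannot simply be unsaturated and the clean counting argument of the $v\in X$ case is unavailable. One is pushed to the Alternating Lemma, and the delicate point there is that the natural $M$-alternating path running through $e$ and $e'$ need not be simple; handling this is exactly where the cycle version of the Alternating Lemma enters, together with the observation that a suitable consecutive pair on the resulting cycle is already covered by the case $v\in X$. A secondary point requiring care throughout is to keep every path, cycle, and matching-switch inside $E(G_y)$, i.e. to use only $(X,Y)$-edges, so that the subgraphs one works with remain $M$-reduced.
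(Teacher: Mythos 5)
Your proof is correct. It relies on the same toolkit as the paper's argument -- the Degree Lemma (Lemma~\ref{lem:degree}), both the path and cycle forms of the Alternating Lemma (Lemma~\ref{lem:alternating}), and the endgame of exploding a hypothetical $J$-edge between two $(X,Y)$-edges that are $J$-adjacent to no matching edge and then contradicting explodability via Theorem~\ref{thm:matchconn} -- but it is organized differently. The paper first shows in a single sweep that no $(X,Y)$-edge is $J$-adjacent to any edge of $M$, by taking an $M$-alternating path from $X_0$ whose last edge is the relevant matching edge, extending it by the edge $e$ into a longer path or a cycle, and applying the Alternating Lemma; only afterwards does it explode a putative $J$-edge between two non-matching $(X,Y)$-edges. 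You instead split according to whether the shared vertex lies in $X$ or in $Y$. For the $X$-side you use a step the paper does not: re-rooting the matching along an $M$-alternating path from $X_0$ so that the shared vertex becomes exposed, after which the Degree Lemma alone (applied to the re-rooted matching, which is legitimate by the remark following Definition~\ref{def:mreduced}) rules out all $J$-adjacency to matching edges and the explosion goes through. This treats matching and non-matching edges at that vertex uniformly and is arguably cleaner there. For the $Y$-side your path/cycle analysis essentially reproduces the paper's, with the correctly handled extra point of certifying the non-$J$-adjacent consecutive pair $(m_{x'},e')$ on the cycle via your already-established $X$-side case before invoking the cycle form of the Alternating Lemma. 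Both arguments are complete; yours is longer, but every step checks out, including the care you take to keep all paths, cycles, and matching switches among $(X,Y)$-edges so that they survive in $G_y$ and hence in $V(J)$.
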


\begin{proof}
First, by the Degree Lemma (Lemma~\ref{lem:degree}), any edge $e$ parallel to an edge of $M$ is not $J$-adjacent to any edge of $M$. Next, by the Alternating Lemma (Lemma~\ref{lem:alternating}) any two edges which are together in an $M$-alternating path from $X_0$ are not $J$-adjacent. Now consider a matching edge $m \in M$ and an $(X, Y)$-edge $e$ which intersects it in a vertex $v$. Because $m$ hits $X$, there is an $M$-alternating path starting at $X_0$ which has $m$ as its last edge. If this path ends in $v$, then we can add $e$ to that path to obtain either a longer $M$-alternating path or to obtain an $M$-alternating cycle. Either way, the Alternating Lemma gives that $e$ and $m$ are not $J$-adjacent.

If $v$ is not at the end of this path, then consider the other $M$-edge $m'$ which intersects $e$ (if this does not exist, then $e$ is not $J$-adjacent to $m$ by the Degree Lemma (Lemma~\ref{lem:degree})). There is an $M$-alternating path starting at $X_0$ which has $m'$ as its last edge. This path ends in the intersection of $m'$ and $e$, so by the previous argument, $e$ and $m'$ cannot be $J$-adjacent, and so by the Degree Lemma, $e$ and $m$ are not $J$-adjacent either. Thus we have shown that none of the $(X, Y)$-edges are $J$-adjacent to the edges of $M$.

Now consider two intersecting non-matching edges $e$ and $f$ between $X$ and $Y$. If they were $J$-adjacent, then they would be explodable, but because $e$ and $f$ are not $J$-adjacent to any $M$-edges, $M \subseteq V(J \explode ef)$, so by Lemma~\ref{thm:matchconn}, $\conn(J \explode ef) \geq \abs{M}/2 - 2 = k - 2$. This contradicts explodability, so they must not be $J$-adjacent. 
\end{proof}

Now consider the matching edge $m \in M$ containing $y$. It is isolated in $J$, because all of the edges intersecting $m$ at all are $(X, Y)$-edges. This is a contradiction, because $m$ is then an isolated vertex of $J$, which means $\conn(J) = \infty$, a contradiction. Thus we must have $\conn(L(G_y)) \geq k - 1$ as desired. Thus $X$ is good. This contradicts the assumption that there were no good sets, so $G$ must in fact have a perfect matching.

Now we will show~\ref{goodsets:minimal} holds. Let $X \subseteq B$ be a minimal equineighbored set. We want to show that $\abs{X} = 2$, from which easily follows that the edges incident to $X$ form a $C_4$ (possibly with parallel edges). Indeed, if $X$ is a minimal equineighbored set of size $2$, then its vertices must both have two neighbors (a vertex with only one neighbor would be a proper equineighbored subset, a vertex with more than two neighbors is ruled out by $\abs{N(X)} = 2$, and an isolated vertex is ruled out by the fact that we have a perfect matching), which means they both connect to both neighbors of $X$, which forms a $C_4$.

So suppose that $\abs{X} \neq 2$. We will show that $X$ is good. By Lemma~\ref{lem:decentequineighbored}, $X$ is decent, so we must simply check that for all $y \in N(X)$, the graph $G_y$ formed by erasing from $G$ all edges incident to $y$ and not incident to $X$ has the property that $\conn(L(G_y)) \geq k - 1$.

Indeed suppose it did not. We could then apply Theorem~\ref{thm:cpdecomposition} to get a CP-decomposition of $L(G_y)$. Note that $X$ is still a minimal equineighbored subset of $B$ in $G_y$.

\begin{claim}
$X$ does not contain any interior vertex of a $P_4$ in any CP-decomposition of $L(G_y)$ with respect to any perfect matching.
\end{claim}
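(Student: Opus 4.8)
The plan is to argue by contradiction: suppose some $P_4$, call it $T$ with edges $pg$, $gh$, $hq$ (so $g$ and $h$ are its interior vertices), occurs in a CP-decomposition of $L(G_y)$ with respect to a perfect matching $M^*$, and suppose without loss of generality that the interior vertex $g$ lies in $X$. Since $T$ is a $P_4$ appearing in the CP-decomposition, it contains two edges of $M^*$, which must be the two end edges $pg$ and $hq$; in particular $pg \in M^*$ and $g$ is matched by $M^*$ to $p$. The key structural fact I want to exploit is that $X$ is equineighbored in $G_y$ and $M^*$ (being perfect) matches $X$ to $N(X)$; combined with $g \in X$, this pins down where the edges incident to $g$ can go.

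First I would observe that since $g \in X$ and $M^*$ matches $X$ bijectively to $Y := N(X)$, the vertex $p$ matched to $g$ lies in $Y$. Next, I would consider the edge $gh$: it is incident to the interior vertex $g \in X$, but $h$ is the \emph{other} interior vertex of $T$, and I need to locate $h$. Here the relevant observation is that $gh$ is $J$-adjacent (in $L(G_y)$) to $pg$, and more importantly that in $G_y$ we have deleted precisely the edges at $y$ that are \emph{not} incident to $X$. The contradiction should come from examining whether $h \in Y$ or not, and tracing an $M^*$-alternating path. Concretely: the edges $pg$ (a matching edge, $g \in X$, $p \in Y$) and $gh$ together with $hq$ form the $P_4$; since $gh \notin M^*$ and $hq \in M^*$, we get an $M^*$-alternating segment $p, g, h, q$ in which $g \in X$.

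The main work — and the main obstacle — is to derive from this a proper nonempty equineighbored subset of $X$, contradicting the minimality of $X$ (which persists in $G_y$, as noted just before the claim). The natural candidate is built via Lemma~\ref{lem:alternatingequineighbored}: I would take $X_0$ to be a suitable singleton among $\{g\}$ or the $B$-endpoint of a relevant edge and let $X'$ be the set of $B$-vertices reachable from $X_0$ on $M^*$-alternating paths starting with a non-matching edge, which is automatically equineighbored. The delicate point is to show $X'$ is \emph{proper}: some vertex of $X$ — I expect this to be $g$ itself, or whichever endpoint of $hq$ lies in $B$ — must be \emph{unreachable} in this way, and this unreachability should follow from the fact that $y \in Y$ has had all its non-$X$ edges deleted in $G_y$, so any alternating path trying to exit through $y$ is blocked. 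Pinning down exactly which vertex to exclude, and verifying the path-reachability bookkeeping (whether $h \in X$ or not, and what $M^*$ does at $h$ and $q$), is where the real care is needed; everything else is routine once that structural picture is set up. I would also need to handle the degenerate possibility that the $P_4$ forces $|X| \le 1$ or that $y$ coincides with one of the vertices of $T$, but these cases should be quickly dispatched using that $G_y$ has a perfect matching (from part~\ref{goodsets:perfect}) and that $X$ is equineighbored of size $\neq 2$.
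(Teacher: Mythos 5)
Your proposal outlines the right \emph{target} (find a proper nonempty equineighbored subset of $X$ and contradict minimality) but leaves the actual construction undone, and the route you sketch is headed in the wrong direction. You propose to build the subset via $M^*$-alternating paths and Lemma~\ref{lem:alternatingequineighbored}, and to get properness by arguing that the vertex $y$ (whose non-$X$ edges were deleted in $G_y$) blocks some path. That hope is misplaced: the claim is a purely structural fact about CP-decompositions of $L(G_y)$ and its proof in the paper makes no use of $y$ at all. Moreover you explicitly concede that ``pinning down exactly which vertex to exclude, and verifying the path-reachability bookkeeping \dots is where the real care is needed,'' which is precisely the missing content --- the proposal does not identify an unreachable vertex, and it is not clear the alternating-path machinery yields one.

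The idea you are missing is a direct consequence of the definition of a CP-decomposition: an endpoint of a $P_4$ has \emph{only} interior vertices as neighbors (every edge of $G_y$ is either parallel to an edge of a $C_4$ or incident to an interior vertex of a $P_4$, and an endpoint lies on no $C_4$ and is itself not interior). With this in hand the paper removes \emph{all} interior vertices of paths that lie in $X$ at once: let $X_0$ be this set, and observe that the $|X_0|$ path-endpoints matched by $M^*$ to $X_0$ lie in $N(X)$ but have no neighbor in $X\setminus X_0$, so $|N(X\setminus X_0)| \le |N(X)| - |X_0| = |X\setminus X_0|$; by the perfect matching this is an equality, so $X\setminus X_0$ is equineighbored. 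It is nonempty because if $X$ consisted entirely of interior vertices then each such $g$ would contribute two distinct $A$-vertices to $N(X)$ (its matched endpoint and the other interior vertex of its $P_4$, distinct across paths), forcing $|N(X)| \ge 2|X|$ and contradicting $|N(X)| = |X|$. Minimality then forces $X_0 = \emptyset$. Your singleton-seeded alternating-path approach would need to rediscover, in a roundabout way, the fact that matched endpoints drop out of the neighborhood; without the ``endpoints only see interior vertices'' observation there is no reason your reachable set $X'$ should be proper, and appealing to $y$ does not supply it.
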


\begin{proof}
Fix a perfect matching $M$ of $G_y$, and fix a CP-decomposition $S_1, \dots, S_s$, $T_1, \dots, T_t$ of $L(G_y)$ with respect to $M$. Let $X_0$ be the set of interior vertices of the paths $T_j$ in $X$. Then $X \setminus X_0$ is also equineighbored because the endpoints of the paths $T_j$ which are partnered with the vertices of $X_0$ in the matching $M$ are not in the neighborhood of $X \setminus X_0$ since all edges incident to them must connect to interior vertices of the paths. Since there are $\abs{X_0}$ endpoints in $X$, we have removed at least as many vertices from the neighborhood as we have removed from $X$. Note that $X \setminus X_0$ cannot be empty as $X$ could not have consisted entirely of interior vertices of the paths, since those have at least two distinct neighbors each. It follows that $X_0$ must have been empty and the claim follows.
\end{proof}

\begin{claim}
$X$ does not contain any vertices of a $C_4$ in any CP-decomposition of $L(G_y)$ with respect to any perfect matching.
\end{claim}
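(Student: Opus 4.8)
The plan is to argue much as in the previous claim, reaching a contradiction with the minimality of $X$. Suppose some vertex of $X$ lies on a $C_4$ of the CP-decomposition. Fix a perfect matching $M$ of $G_y$ (if $G_y$ has none, the claim is vacuous) and a CP-decomposition $S_1,\dots,S_s,T_1,\dots,T_t$ of $L(G_y)$ with respect to $M$; write each $C_4$ $S_i$ as a copy of $K_{2,2}$ with $A$-vertices $\{a_i,a_i'\}$ and $B$-vertices $\{b_i,b_i'\}$, and note that the two edges of $M$ contained in $S_i$ form a perfect matching of $S_i$. First I would check that for each $i$, either both of $b_i,b_i'$ lie in $X$ or neither does. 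Since $M$ is perfect and $X$ is equineighbored in $G_y$, $M$ restricts to a bijection from $X$ onto $Y:=N(X)$, so $M^{-1}$ maps $Y$ onto $X$. If $b_i\in X$, its $M$-partner is an $A$-vertex of $S_i$ and lies in $Y$; the other $A$-vertex of $S_i$ is adjacent to $b_i$, hence also lies in $Y$; and its $M$-partner is $b_i'$, which therefore lies in $M^{-1}(Y)=X$.

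Next, let $X_0\subseteq X$ be the union of the pairs $\{b_i,b_i'\}$ over those $S_i$ whose $B$-vertices lie in $X$, and let $Y_0\subseteq Y$ be the union of the corresponding pairs $\{a_i,a_i'\}$. By vertex-disjointness of the CP-decomposition, $|X_0|=|Y_0|$, and $X_0\neq\emptyset$ by our assumption. The crucial step is to show that no vertex of $Y_0$ has a neighbour in $X\setminus X_0$. Suppose $a\in Y_0$, say $a$ lies on $S_i$, were adjacent to some $b\in X\setminus X_0$. The edge $ab$ of $G_y$ is a vertex of $L(G_y)$, so by property~\ref{cp:edges} it is parallel to an edge of some $S_{i'}$ or at home in some $T_j$. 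In the first case $a$ and $b$ both lie on $S_{i'}$, forcing $S_{i'}=S_i$ and hence $b\in X_0$, a contradiction. In the second case $a$ or $b$ is an interior vertex of $T_j$; but $a$ lies on the $C_4$ $S_i$ and so on no $P_4$, while $b\in X$ is not an interior vertex of any $P_4$ by the previous claim. This contradiction establishes the crucial step.

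Finally I would contradict the minimality of $X$. If $X\setminus X_0\neq\emptyset$, then the crucial step gives $N(X\setminus X_0)\subseteq Y\setminus Y_0$, so $|N(X\setminus X_0)|\leq|Y|-|Y_0|=|X|-|X_0|=|X\setminus X_0|$; since $G_y$ has a perfect matching, also $|N(X\setminus X_0)|\geq|X\setminus X_0|$, so $X\setminus X_0$ is a nonempty equineighbored proper subset of $X$ (proper since $X_0\neq\emptyset$), contradicting minimality. If instead $X=X_0$, then $Y\supseteq Y_0$ together with $|Y|=|X|=|X_0|=|Y_0|$ forces $Y=Y_0$; fixing any $S_i$ with $B$-vertices in $X$, the same reasoning as in the crucial step (a neighbour of $b_i$ in $Y_0\setminus\{a_i,a_i'\}$ would lie on some $S_{i'}$ with $i'\neq i$, producing an edge forbidden by property~\ref{cp:edges}) shows $N(\{b_i,b_i'\})=\{a_i,a_i'\}$, so $\{b_i,b_i'\}$ is equineighbored; since $|X|=|X_0|$ is a positive even number different from $2$, it is at least $4$, and $\{b_i,b_i'\}$ is again a proper subset of $X$, contradicting minimality. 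I expect the crucial step, together with its reuse in the case $X=X_0$, to be the main obstacle: a vertex of $X$ may genuinely be adjacent to an interior vertex of a $P_4$, giving a legitimately \emph{at home} edge, and the argument survives only because every troublesome neighbour lies in $Y_0$, which consists entirely of $C_4$-vertices and hence contains no $P_4$-vertex at all.
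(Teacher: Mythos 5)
Your proof is correct and takes essentially the same route as the paper: property~\ref{cp:edges} of the CP-decomposition, the vertex-disjointness of the $S_i$'s and $T_j$'s, and the previous claim together show that deleting the $B$-vertices of the $C_4$'s from $X$ deletes at least as many vertices from $N(X)$, after which Hall's condition (from the perfect matching) and the minimality of $X$ give the contradiction. The paper is slightly more economical, working with a single $C_4$ and letting the counting force $|X_0|=2$, which makes your separate both-or-neither step and the case split $X=X_0$ unnecessary; these are organizational differences only.
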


\begin{proof}
Fix a perfect matching $M$ of $G_y$, and fix a CP-decomposition $S_1, \dots, S_s$, $T_1, \dots, T_t$ of $L(G_y)$ with respect to $M$. Let $X_0$ be the vertices of some $4$-cycle $S_i$ which are contained in $X$. Then $X \setminus X_0$ is also equineighbored because the two vertices of that $S_i$ which are adjacent to $X_0$ are not in the neighborhood of $X \setminus X_0$ as $X$ does not contain any interior vertices of any $T_j$ by the previous claim, and the only neighbors of the vertices of $S_i$ are other vertices of $S_i$ and interior vertices of paths $T_j$ by the definition of a CP-decomposition. Therefore we would remove at least as many vertices from the neighborhood of $X$ as we would remove from $X$. It follows that if $X_0$ is nonempty, then $\abs{X_0} = 2$, because if $\abs{X_0} = 1$, then we would have $\abs{N(X \setminus X_0)} < \abs{X \setminus X_0}$, which contradicts the fact that $G_y$ has a perfect matching. Since $\abs{X} \neq 2$, we cannot have $X \setminus X_0 = \emptyset$, so $X \setminus X_0$ is a proper equineighbored subset of $X$, which is a contradiction to the minimality of $X$.
\end{proof}

Thus we have shown that $X$ consists entirely of endpoints of $P_4$'s (there are no other types of vertices, since we have a perfect matching). Then $y$ is an interior vertex of some $P_4$. However, $y$ only has neighbors in $X$, so this cannot be the case (since every interior vertex of a path is adjacent to another interior vertex). Since we have reached a contradiction, it follows that we must have $\conn(L(G_y)) \geq k - 1$. Thus $X$ is a good set, which is a contradiction to the conditions of the lemma. Therefore, we must have $\abs{X} = 2$ and $G[X \cup N(X)]$ is a $C_4$, which is \ref{goodsets:minimal}. This proves the lemma.
\end{proof}

\section{Remarks and Open Problems} \label{sec:remarksandopenproblems}

Concerning the tightness of Theorem~\ref{thm:matchconn} several interesting questions remain open. In the main result of our paper we characterized those \emph{bipartite} graphs for which the theorem is tight when $r = 2$.

What happens with this characterization if one leaves out the restriction of bipartiteness? The graph $G$ consisting of a triangle and a hanging edge is an example of a non-bipartite graph which is tight for Theorem~\ref{thm:matchconn}. Indeed, $\nu(G) = 2$ while the line graph is $K_4$ minus an edge, having a disconnected independence complex. It would be very interesting to obtain a full characterizations of those graphs $G$ which are tight for Theorem~\ref{thm:matchconn}. 

Another natural direction is to consider hypergraphs with uniformity higher than $2$. It is not difficult to see that Theorem~\ref{thm:matchconn} is also best possible for every $r > 2$. Just take a matching of size $mr$ and add $m$ edges that intersect $r$ different matching edges each. However, a characterization of those $r$-graphs for which $\conn(\cH) = \frac{\nu(\cH)}{r} - 2$ is still outstanding; the case of $r$-partite $r$-graphs already being very interesting.

A related question concerns the relationship of Theorem~\ref{thm:matchconn} to Ryser's Conjecture for $r > 2$. We mentioned already that in \cite{HNS2} we complete the proof that a graph is tight for Theorem~\ref{thm:matchconn} if and only if it is the link graph of a Ryser-extremal $3$-graph. Is this equivalence or at least one of its directions true for $r > 2$?

Finally, Theorem~\ref{thm:matchconn} has a chance to be best possible only for graphs whose matching number is even. It would be interesting to prove a characterization of $2$-graphs with an \emph{odd} matching number and having a line graph with connectedness as small as possible (in terms of the matching number). 
Is there is a CP-decomposition-type characterization of all (bipartite) graphs with matching number $2k + 1$ and connectedness $k - 1$? 

\section{Appendix} 
\label{sec:appendix}

We start with deriving Theorem~\ref{thm:deficiencyrainbowmatching} from a more general statement from~\cite{aharoniberger} about colored simplicial complexes.

A \emph{coloring} of the vertices of a simplicial complex $\cC$ by colors from a set $X$ is a function $\chi: V(\cC) \to X$. For a subset $S \subseteq X$ of colors, denote by $\cC|_S$ the subcomplex of $\cC$ induced by the vertices which have colors from $S$: that is, let $V(\cC|_S) = \chi^{-1}(S)$ and $\cC|_S = \set{\sigma \in \cC}{\chi(\sigma) \subseteq S}$. 
A simplex is {\em rainbow} if all its vertices have distinct colors.

\begin{thm} \label{thm:deficiencyrainbowsimplex}
Let $\cC$ be a simplicial complex whose vertices are colored with colors from a set $X$, and let $d \geq 0$ be an integer. If for every $S \subseteq X$ we have that $\conn(\cC|_S) \geq \abs{S} - d - 2$, then $\cC$ has a rainbow simplex with $\abs{X} - d$ vertices.
\end{thm}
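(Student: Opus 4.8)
The plan is to build a suitable colored triangulation of a large simplex and then invoke Sperner's Lemma. Let $n = |X|$ and index the colors as $X = \{1, \dots, n\}$. I would work with the boundary complex (or rather a full simplex) on a vertex set partitioned into $n$ groups, one group per color, where the group for color $i$ consists of enough copies of the vertices of $\cC$ having color $i$. The rough idea is that a rainbow simplex of $\cC$ with $n-d$ vertices should correspond, via a Sperner-type argument, to a panchromatic (or nearly panchromatic) simplex in a cleverly constructed triangulation whose vertex labels are the colors in $X$. Concretely, I would triangulate an $(n-1)$-simplex $\Delta$ whose facets are indexed by the colors, assign to each vertex $v$ of the triangulation a color $\chi(v) \in X$ in a way that (a) respects the boundary conditions needed for Sperner's Lemma on $\Delta$, and (b) is constructed so that every full-dimensional simplex of the triangulation is ``carried'' by a simplex of $\cC$. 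Then Sperner's Lemma produces a rainbow simplex in the triangulation, and by (b) its carrier in $\cC$ is a rainbow simplex, which one argues has at least $n-d$ vertices using the deficiency parameter.

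The key technical device enabling (b) is the connectedness hypothesis: to extend the coloring/labeling over higher-dimensional skeleta of the triangulation while keeping each simplex carried by a face of $\cC|_S$ for the appropriate color set $S$, I would proceed by induction on the dimension of the skeleton, using at each stage that $\conn(\cC|_S) \geq |S| - d - 2$ to extend a map defined on the boundary of a cell (a sphere of the relevant dimension) across the cell itself. This is exactly the situation where $k$-connectedness is used: a map from $S^j$ extends over $B^{j+1}$ whenever $j \le \conn$. The deficiency $d$ enters because we cannot extend all the way to the top dimension over every color set — we lose $d$ dimensions' worth of extendability — and this is precisely what forces the rainbow simplex to have $n - d$ rather than $n$ vertices; equivalently, one allows up to $d$ of the color classes to be ``skipped'' in the Sperner labeling and tracks this through the argument.

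The main obstacle, I expect, is setting up the triangulation and the accompanying labeling with enough care that both the Sperner boundary condition and the carrier condition hold simultaneously, and that the bookkeeping of the deficiency $d$ is consistent across all subsets $S \subseteq X$ rather than just for $S = X$. In particular one must be careful that when a face of the triangulation lies on the subsimplex of $\Delta$ spanned by a color subset $S$, its carrier lies in $\cC|_S$ and the relevant connectedness bound $|S| - d - 2$ is the one available — so the induction on skeleton dimension has to be performed uniformly over all $S$ at once. Once the triangulation is in place, the final step is routine: Sperner's Lemma yields a rainbow cell, its carrier is a rainbow simplex of $\cC$, and a counting argument on how many colors can fail to appear (at most $d$, by the deficiency accounting) shows this simplex has at least $|X| - d$ vertices. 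I would then remark that Theorem~\ref{thm:deficiencyrainbowmatching} follows immediately by taking $\cC = \cI(L(\link{\cH}{V_i}))$ with $\chi$ recording, for each edge of the link, which vertex of $V_i$ it came from.
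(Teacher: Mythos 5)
Your plan for the case $d = 0$ is essentially the paper's argument: build a subdivision of an $(|X|-1)$-simplex skeleton by skeleton, choosing a simplicial map into $\cC$ that induces a Sperner labeling, using $\conn(\cC|_S)\ge |S|-2$ at each stage to extend over the next skeleton, and then invoke Sperner's Lemma. However, your treatment of $d>0$ is where a genuine gap appears.

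For $d\ge 1$ the hypothesis only gives $\conn(\cC|_S)\ge |S|-d-2$, which is short of what the skeleton-by-skeleton extension needs by exactly $d$ in every dimension. Concretely, the construction already fails at the $0$-skeleton: for a singleton $S=\{x\}$ the bound $\conn(\cC|_{\{x\}})\ge -1-d$ gives no information for $d\ge 1$, so $\cC|_{\{x\}}$ could be empty and you cannot even place a vertex of that color. Your proposed fix, to ``allow up to $d$ of the color classes to be skipped in the Sperner labeling and track this through the argument,'' is not developed into a workable scheme; a Sperner labeling with missing colors does not straightforwardly yield a nearly-rainbow cell of controlled size, and more importantly the extension step over a face $\sigma$ with color set $S$ still requires $\conn(\cC|_S)\ge \dim(\sigma)-1$, which does not follow from the weaker hypothesis regardless of how colors are allotted. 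The paper avoids this entirely: it proves the $d=0$ case as you describe and then reduces $d\ge 1$ to $d-1$ by taking the join $\hat\cC=\cC * \cN$ with a set $\cN$ of $|X|$ isolated dummy vertices, one per color. By the join inequality $\conn(\cC|_S * \cN|_S)\ge \conn(\cC|_S)+\conn(\cN|_S)+2$ the hypothesis for $\hat\cC$ holds with $d-1$, giving a rainbow simplex of size $|X|-d+1$ in $\hat\cC$; since at most one vertex of any simplex of the join comes from $\cN$, deleting it yields a rainbow simplex of size at least $|X|-d$ in $\cC$. You would need to either reproduce this join reduction or supply a genuinely different mechanism to absorb the loss of $d$ in the connectivity hypothesis; the sketch as written does not do so.
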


For the proof of Theorem~\ref{thm:deficiencyrainbowmatching} the crucial thing to note is that if for each hyperedge $xyz \in E(\cH)$ we color the corresponding edge $xy$ of the link graph $\link{\cH}{V_i}$ with the third vertex $z \in V_i$, then a matching in the hypergraph $\cH$ corresponds to a \emph{rainbow matching} (a matching with edges having pairwise distinct colors) in the link graph $\link{\cH}{V_i}$. Then Theorem~\ref{thm:deficiencyrainbowmatching} is an immediate consequence of Theorem~\ref{thm:deficiencyrainbowsimplex} applied with the independence complex $\cI(L(\link{\cH}{V_i}))$ of the link graph. Indeed, $\cI(L(\link{\cH}{V_i}))|_S = \cI(L(\link{\cH}{S}))$ and the vertices of a rainbow simplex in the independence complex of $L(\link{\cH}{V_i})$ correspond to pairwise disjoint edges in the link $\link{\cH}{V_i})$, which extend to pairwise distinct vertices in $V_i$, and hence form a hypergraph matching.

\subsection{Rainbow Simplices}

We now briefly introduce a couple of topological notions which we need for the proof of Theorem~\ref{thm:deficiencyrainbowsimplex}. 

The \emph{join} of two abstract simplicial complexes $\cC$ and $\cD$ is the abstract simplicial complex $\cC * \cD = \set{(\sigma \times \lset{0}) \cup (\tau \times \lset{1})}{\sigma \in \cC, \tau \in \cD}$. A useful fact relating connectedness to joins is the following:

\begin{prop}[Lemma 2.3 in \cite{milnor}] \label{prop:joinconn}
If $\cC$ and $\cD$ are abstract simplicial complexes, then
\[
	\conn(\cC * \cD) \geq \conn(\cC) + \conn(\cD) + 2.
\]
\end{prop}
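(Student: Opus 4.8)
The plan is to prove the join inequality $\conn(\cC * \cD) \geq \conn(\cC) + \conn(\cD) + 2$ directly from the definition of $k$-connectedness, using the homeomorphism between the geometric realization of a join of abstract simplicial complexes and the topological join of their realizations. First I would recall that for topological spaces $X$ and $Y$, the join $X * Y$ can be described as the quotient of $X \times Y \times [0,1]$ collapsing $X \times Y \times \{0\}$ onto $X$ and $X \times Y \times \{1\}$ onto $Y$, and that this construction is compatible with simplicial joins: $\poly{\cC * \cD} \cong \poly{\cC} * \poly{\cD}$. So it suffices to show that if $X$ is $a$-connected and $Y$ is $b$-connected then $X * Y$ is $(a+b+2)$-connected.

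The cleanest route is the suspension/smash-product description: there is a homotopy equivalence $X * Y \simeq \Sigma(X \wedge Y)$, the (reduced) suspension of the smash product, when $X$ and $Y$ are reasonably nice (CW, which geometric realizations of finite complexes are). Suspension raises connectedness by one, and one shows that the smash product of an $a$-connected space and a $b$-connected space is $(a+b+1)$-connected; combining gives $(a+b+2)$-connected. Alternatively, and perhaps more self-contained for the paper's purposes, I would argue directly: given a continuous map $f : S^j \to X * Y$ with $j \leq a + b + 2$, I want to extend it over $B^{j+1}$. Write points of $X * Y$ in join coordinates $t x + (1-t) y$. The subspace where $t \geq 1/2$ deformation retracts onto $X$, and where $t \leq 1/2$ deformation retracts onto $Y$; the ``equator'' $t = 1/2$ is $X \times Y$. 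The standard Mayer–Vietoris / van Kampen style decomposition then lets one build the extension from extensions over the two halves, using that a map of $S^j$ into $X * Y$ can, after a homotopy, be pushed so that the preimage of the equator is a codimension-one sphere, reducing to extending maps of lower-dimensional spheres into $X$ and into $Y$ separately, where the connectedness hypotheses on $X$ and $Y$ apply because $j - 1$ and the relevant dimensions are small enough.

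I expect the main obstacle to be making the ``push off the equator and decompose'' argument fully rigorous — in particular the transversality/general-position step that arranges $f^{-1}(\text{equator})$ to be a nice submanifold of $S^j$, and the bookkeeping that the pieces one must extend into $X$ (respectively $Y$) have dimension at most $a$ (respectively $b$) so that $a$-connectedness (respectively $b$-connectedness) applies. Since the proposition is quoted from Milnor~\cite{milnor} (Lemma 2.3 there), the honest and economical thing for this paper is to cite it and give only the short reduction $\poly{\cC * \cD} \cong \poly{\cC} * \poly{\cD}$ together with the suspension-of-smash heuristic, rather than reproving Milnor's lemma. I would therefore present the reduction to topological joins as the one explicit step and invoke \cite{milnor} for the connectedness estimate of the topological join itself.
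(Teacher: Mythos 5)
The paper states Proposition~\ref{prop:joinconn} purely as a citation to Lemma~2.3 of Milnor~\cite{milnor} and offers no proof of its own, so your concluding recommendation---note $\poly{\cC * \cD} \cong \poly{\cC} * \poly{\cD}$ and invoke Milnor for the topological join---matches the paper's approach exactly. Your two supplementary sketches (the homotopy equivalence $X * Y \simeq \Sigma(X \wedge Y)$ combined with the connectivity of smash products, and the direct equator-decomposition argument) are both standard and correct in outline, with the caveat you yourself flag that the transversality step in the second would need care since $X * Y$ is not a manifold; the paper simply does not include any of this.
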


A map $f: V(\cC) \to V(\cD)$ is a \emph{simplicial map} if the image of each simplex of $\cC$ is a simplex of $\cD$.

If $\cK$ is a simplicial complex, then a \emph{subdivision} of $\cK$ is a simplicial complex $\cK'$ with $\poly{\cK'} = \poly{\cK}$ such that every simplex in $\cK'$ is contained in a simplex in $\cK$.

To determine the connectedness of a simplicial complex, it is sufficient to consider simplicial maps into subdivisions of the simplex.

\begin{prop}[{\cite[Proposition 2.8]{szabotardos}}] \label{prop:subdivfillconn}
A simplicial complex $\cC$ is $k$-connected if and only if for every $j$ with $-1 \leq j \leq k$ and for every simplicial map $f: V(\cS) \to V(\cC)$, where $\cS$ is a subdivision of the boundary of a $(j + 1)$-simplex, there is a subdivision $\cB$ of a $(j + 1)$-simplex with $\cS$ as its boundary, and a simplicial map $\hat{f}: V(\cB) \to V(\cC)$ extending $f$.
\end{prop}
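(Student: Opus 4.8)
The plan is to prove this as the standard simplicial‑approximation reformulation of the definition of $k$‑connectedness, handling the two implications separately and treating the degenerate index $j=-1$ by hand. Recall first that for $j\ge 0$ a continuous map $S^j\to\poly{\cC}$ extends over $B^{j+1}$ exactly when it is null‑homotopic (since $B^{j+1}$ is the cone on $S^j$), whereas for $j=-1$ the statement ``every map $S^{-1}=\emptyset\to\poly{\cC}$ extends over $B^0$'' just says $\poly{\cC}\neq\emptyset$; on the simplicial side the $j=-1$ case likewise collapses to ``$\cC$ has a vertex'', so both sides agree trivially there and I may assume $j\ge 0$ throughout.

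For the forward direction I would assume $\cC$ is $k$‑connected, fix $j$ with $0\le j\le k$, and let $f\colon V(\cS)\to V(\cC)$ be simplicial with $\cS$ a subdivision of $\partial\Delta^{j+1}$; write $|f|\colon\poly{\cS}\to\poly{\cC}$ for the induced continuous map, noting $\poly{\cS}=S^j$. By $k$‑connectedness applied at this $j$, the map $|f|$ extends to a continuous $g\colon B^{j+1}\to\poly{\cC}$. Now realize $B^{j+1}$ as the body of the cone $\cB_0$ over $\cS$ (the join of $\cS$ with the barycenter of $\Delta^{j+1}$): this $\cB_0$ is a subdivision of $\Delta^{j+1}$ whose boundary subcomplex is exactly $\cS$, and on $\cS$ the map $g$ coincides with the simplicial map $f$. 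Applying the \emph{relative} Simplicial Approximation Theorem to $g$ while holding the subcomplex $\cS$ fixed produces a subdivision $\cB$ of $\cB_0$ that does not further subdivide $\cS$ (so $\partial\cB=\cS$) together with a simplicial map $\hat f\colon V(\cB)\to V(\cC)$ agreeing with $g$, hence with $f$, on $\poly{\cS}$. Since $\cB$ refines $\cB_0$ which refines $\Delta^{j+1}$, $\cB$ is a subdivision of $\Delta^{j+1}$ with boundary $\cS$ and $\hat f$ extends $f$, as required.

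For the converse I would assume the simplicial filling condition and fix $j$ with $0\le j\le k$; let $h\colon S^j\to\poly{\cC}$ be continuous. Triangulating $S^j$ as $\partial\Delta^{j+1}$ and invoking the Simplicial Approximation Theorem, I pass to a subdivision $\cS$ of $\partial\Delta^{j+1}$ on which $h$ admits a simplicial approximation $f\colon V(\cS)\to V(\cC)$, so in particular $|f|\simeq h$ as maps $S^j\to\poly{\cC}$. The hypothesis applied to $f$ then yields a subdivision $\cB$ of $\Delta^{j+1}$ with $\partial\cB=\cS$ and a simplicial map $\hat f\colon V(\cB)\to V(\cC)$ extending $f$; its realization $|\hat f|\colon B^{j+1}\to\poly{\cC}$ is a continuous extension of $|f|$ over $B^{j+1}$. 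Hence $|f|$ is null‑homotopic, and since $h\simeq|f|$ so is $h$, which is equivalent to $h$ extending over $B^{j+1}$. As $j\le k$ was arbitrary, $\cC$ is $k$‑connected.

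The bookkeeping with subdivisions and realizations is routine; the one genuinely delicate point is the use of the \emph{relative} Simplicial Approximation Theorem in the forward direction — one must approximate the continuous extension $g$ by a simplicial map \emph{without} changing it on the prescribed boundary triangulation $\cS$, so that $\hat f$ genuinely extends the given $f$ and $\partial\cB$ is literally $\cS$ rather than merely some further refinement of it. The only other subtlety is the index $j=-1$, where in both directions one has to observe that the vacuous sphere condition and the ``empty $\cS$'' condition each reduce to the single requirement that $\cC$ be nonempty.
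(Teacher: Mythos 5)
Your proof is correct. Note that the paper itself gives no proof of this proposition --- it is quoted verbatim from Szab\'o--Tardos \cite{szabotardos} --- so there is nothing internal to compare against; your argument is the standard one that underlies the cited result. You correctly isolate the two genuine points of care: the degenerate index $j=-1$, where both sides collapse to nonemptiness of $\cC$, and the use of the \emph{relative} (Zeeman) version of the Simplicial Approximation Theorem in the forward direction, which is exactly what is needed to keep the prescribed boundary triangulation $\cS$ unsubdivided so that $\hat f$ literally extends $f$; the cone construction $\cB_0$ over $\cS$ with apex the barycenter is the right way to present $B^{j+1}$ as a subdivision of $\Delta^{j+1}$ with boundary $\cS$. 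The converse via simplicial approximation and the equivalence ``extends over the cone iff null-homotopic'' is likewise sound.
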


We prove Theorem~\ref{thm:deficiencyrainbowsimplex} using the original proof idea from~\cite{aharonihaxell, aharoniberger} of the case $d=0$, which constructs an appropriate colored triangulation of the simplex and then uses Sperner's Lemma. 
We prove our $d$-defect version with the standard trick of adding a set of $d$ dummy vertices.

\begin{lem}[Sperner's Lemma~\cite{sperner}]
Let $\cT$ be a subdivision of a simplex $\Delta$ of dimension $n$. Let $c: V(\cT) \to A$ be a coloring of the vertices of the subdivision such that
\begin{enumerate}
	\renewcommand{\theenumi}{(\arabic{enumi})}
	\renewcommand{\labelenumi}{\theenumi}
	\item \label{sl:rainbow} Each vertex of $\Delta$ receives a different color,
	\item \label{sl:faces} The vertices of $\cT$ on a face $\sigma$ of $\Delta$ are colored by the colors of the vertices of $\sigma$.
\end{enumerate}
Then there is an $n$-dimensional rainbow simplex in $\cT$.
\end{lem}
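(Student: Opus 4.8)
The plan is to prove the slightly stronger assertion that the \emph{number} of $n$-dimensional rainbow simplices of $\cT$ is odd, which is the natural statement to carry through an induction on $n$. Write $v_0,\dots,v_n$ for the vertices of $\Delta$ and, after relabelling the colour set $A$, assume $c(v_i)=i$. Applying condition \ref{sl:faces} to the face $\sigma=\Delta$ itself shows every vertex of $\cT$ carries a colour in $\{0,1,\dots,n\}$, so an $n$-dimensional rainbow simplex is exactly one whose $n+1$ vertices carry all of these colours. The base case $n=0$ is immediate: $\cT$ is a single vertex carrying the single colour, so there is exactly one rainbow $0$-simplex.

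For the inductive step I would run the classical double-counting (parity) argument. Call an $(n-1)$-face of an $n$-simplex of $\cT$ a \emph{door} if its $n$ vertices carry exactly the colours $\{0,1,\dots,n-1\}$, and count the incident pairs $(\sigma,\delta)$ with $\sigma$ an $n$-simplex of $\cT$ and $\delta$ a door of $\sigma$. A short case analysis on the multiset of colours of the $n+1$ vertices of $\sigma$ gives: if $\sigma$ is rainbow it has exactly one door (delete the vertex coloured $n$); if the colour \emph{set} of $\sigma$ is exactly $\{0,\dots,n-1\}$, so some colour is repeated, then $\sigma$ has exactly two doors (delete either vertex carrying the repeated colour); and in every other case $\sigma$ misses some colour $i\le n-1$ on all its vertices and hence has no door. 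Therefore the number of incident pairs is congruent modulo $2$ to the number of rainbow $n$-simplices.

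Counting the same pairs grouped by the door uses that a subdivision of a simplex is a pseudomanifold with boundary: each $(n-1)$-simplex of $\cT$ lies in exactly two $n$-simplices if it is interior to $\Delta$, and in exactly one if it lies in $\partial\Delta$. Interior doors thus contribute an even amount, so modulo $2$ the incidence count equals the number of \emph{boundary} doors. A boundary door lies in some facet of $\Delta$; a facet other than $F:=\conv(v_0,\dots,v_{n-1})$ omits some $v_i$ with $i\le n-1$, so by \ref{sl:faces} all vertices of $\cT$ on it omit colour $i$ and it contains no door. Hence the boundary doors are precisely the rainbow $(n-1)$-simplices of the induced subdivision $\cT|_F$ of the facet $F$, whose colouring satisfies the Sperner hypotheses with colour set $\{0,\dots,n-1\}$. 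By the induction hypothesis their number is odd, so the number of rainbow $n$-simplices of $\cT$ is odd, and in particular positive.

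The two case analyses (doors of an $n$-simplex; which facets can carry a boundary door) are routine. The point that deserves explicit care, and which I would flag as the main obstacle, is the topological input: that a subdivision of $\Delta$ is a pseudomanifold with boundary in the precise sense used above (the $2$-versus-$1$ incidence count for $(n-1)$-faces), and that $\cT|_F$ is again a genuine subdivision of the facet $F$ so that the induction applies. Both follow from the definition of subdivision together with standard facts about triangulated PL balls, but they should be stated as a lemma or remark rather than glossed over. (One can equivalently replace the parity count by the ``path-following'' argument on the graph whose nodes are the $n$-simplices of $\cT$ plus one extra node attached through boundary doors, in which rainbow simplices are exactly the odd-degree internal nodes; this variant needs the same pseudomanifold input.)
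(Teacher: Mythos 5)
Your argument is the classical parity (door-counting) proof of Sperner's Lemma and it is correct; the case analysis for the number of doors of an $n$-simplex (one if rainbow, two if the colour set is exactly $\{0,\dots,n-1\}$, zero otherwise) and the reduction of the incidence count modulo $2$ to the boundary doors on the single facet $\conv(v_0,\dots,v_{n-1})$ are all sound, and you correctly identify the only non-trivial topological inputs, namely that a subdivision of $\Delta$ restricts to a subdivision of each facet and that every $(n-1)$-simplex of $\cT$ lies in exactly two $n$-simplices if interior and exactly one if it lies in $\partial\Delta$. Note, however, that the paper does not prove this statement at all: Sperner's Lemma is quoted as a known result with a citation to Sperner's original 1928 paper and is used as a black box in the proof of Theorem~\ref{thm:deficiencyrainbowsimplex}. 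So there is no proof in the paper to compare against; your write-up simply supplies the standard argument that the citation points to, and your stronger conclusion (an odd number of rainbow simplices) is exactly the form in which the induction closes. If you wanted to fold this into the paper you would only need to add the pseudomanifold facts you flag as an explicit remark, but as the authors treat the lemma as classical, omitting the proof entirely is also defensible.
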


\begin{proof}[Proof of Theorem~\ref{thm:deficiencyrainbowsimplex}]
We will prove the statement by induction on $d$. Let first $d = 0$. 

Let $\cC$ be a simplicial complex with a coloring $c: V(\cC) \to X$ of its vertices satisfying the conditions of the theorem and let $\Delta$ be an $(\abs{X} - 1)$-dimensional simplex (so with $\abs{X}$ vertices). The \emph{$k$-skeleton} of $\Delta$ is the subcomplex containing all faces of dimension up to $k$. By induction on $k$, we construct a subdivision $\cT_k$ of the $k$-skeleton of $\Delta$ for every $k = 0, 1, \ldots, \abs{X} - 1$, together with a simplicial map $f_k: V(\cT_k) \to V(\cC)$ so that coloring each vertex $v \in V(\cT_k)$ of the subdivision by the color $c(f_k(v))$ produces a coloring which has property (1) of Sperner's Lemma, as well as property (2) for each face $\sigma$ of $\Delta$ up to dimension $k$. (Such a coloring of will be called a \emph{Sperner coloring}.)

We start with the $0$-skeleton $\cT_0 = \Delta^{(0)}$, which consists of just the vertices of $\Delta$. We choose a simplicial map $f_0: V(\cT_0) \to V(\cC)$ so that every vertex is sent to a vertex with a different color. This is possible because we have as many vertices as there are colors and, most importantly, because the assumption on the connectedness requires that there is a vertex of every color in $\cC$. Indeed, for any $x \in X$, we have $\conn(\cC|_{\lset{x}}) \geq \abs{\lset{x}} - 2 = -1$, hence the subcomplex $\cC|_{\lset{x}}$ is nonempty.

Now suppose that we have already defined a subdivision $\cT_k$ of the $k$-skeleton of $\Delta$ and a simplicial map $f_k: V(\cT_k) \to V(\cC)$ such that if one colors the vertices of the subdivision by the colors of their images under $f_k$, we get a Sperner coloring. We will extend $\cT_k$ and $f_k$ to the $(k + 1)$-skeleton of $\Delta$ by defining the extensions independently for each $(k + 1)$-face $\sigma$ of $\Delta$. The boundary $\partial \sigma$ of $\sigma$ is contained in the $k$-skeleton, so $\cT_k$ contains a subdivision $\cD$ of $\partial \sigma$. Let $S = c(f_k(V(\sigma))) \subseteq X$ be the set of colors of the images of the vertices of $\sigma$ under $f_k$. Because $f_k$ induces a Sperner coloring, we must have that $\abs{S} = k + 2$ and $f_k(V(\cD)) \subseteq \cC|_S$. By assumption, $\conn(\cC|_S) \geq \abs{S} - 2 = k$, and since $\cD$ is a subdivision of the boundary of a $(k + 1)$-simplex, by Proposition~\ref{prop:subdivfillconn} there is a subdivision $\cE$ of $\sigma$ with $\cD$ as its boundary, and a simplicial map $f_\sigma: V(\cE) \to V(\cC|_S)$ extending $f_k$. Doing  this for each $(k + 1)$-simplex one after another, we obtain a subdivision $\cT_{k + 1}$ of the $(k + 1)$-skeleton and a map $f_{k + 1}: V(\cT_{k +1}) \to V(\cC)$ defined as the union of all the maps $f_\sigma$ with $\sigma$ ranging over the $(k + 1)$-faces of $\Delta$. Since each $f_\sigma$ agrees with $f_k$ on the boundary, the union agrees with $f_k$ on the $k$-skeleton and it is well-defined. Also, $f_{k + 1}$ induces a Sperner coloring by construction.

Continuing in this manner, we end up with a subdivision $\cT_{\abs{X} - 1} = \cT$ of the entire simplex $\Delta$ and a simplicial map $f: V(\cT) \to V(\cC)$ inducing a Sperner coloring. Hence, by Sperner's Lemma, there is a rainbow simplex $\tau$ in $\cT$ with $\abs{X}$ vertices. The colors of $V(\tau)$ were defined as the colors of its image via $f$, hence the simplex of $\cC$ with vertices $f(V(\tau))$ must also have $\abs{X}$ vertices with all different colors. So we found our rainbow simplex, which concludes the proof for $d = 0$.

Let now $d \geq 1$ and let $\cC$ be a simplicial complex with a coloring $c: V(\cC) \rightarrow X$ of its vertices such that for every $S \subseteq X$ we have that $\conn(\cC|_S) \geq \abs{S} - d - 2$. Our strategy is to add some new vertices and new simplices to $\cC$ to get a complex $\hat{\cC}$ and extend the coloring $c$ to $\hat{\cC}$ such that $\hat{\cC}$ satisfies the conditions of the theorem with $d_{\hat{\cC}} = d - 1$. We will then apply the induction hypothesis to find a rainbow simplex in $\hat{\cC}$, and since it will turn out that it may contain at most one new vertex, removing it will yield a rainbow simplex in $\cC$ with at least $\abs{X} - d$ vertices.

For each $x \in X$, let $v^{(x)}$ be a new vertex which we color by $x$. Let $\cN$ be the simplicial complex consisting of the isolated vertices $\set{v^{(x)}}{x \in X}$, and let $\hat{\cC} = \cC * \cN$. We claim that $\hat{\cC}$ fulfills the conditions of the theorem with $d_{\hat{\cC}} = d - 1$. Indeed, applying Proposition~\ref{prop:joinconn} we get that $\conn(\hat{\cC}|_S) \geq (\abs{S} - d - 2) - 1 + 2 = \abs{S} - (d - 1) - 2$ for every $S \subseteq X$. Here we used that $\hat{\cC}|_S = \cC|_S * \cN|_S$ and that $\conn(\cN|_S) = -1$, as each color is represented among the new vertices, so $\cN|_S$ is non-empty. Thus, by induction, $\hat{\cC}$ contains a rainbow simplex $\tau$ with $\abs{X} - d + 1$ vertices. To complete the proof of the theorem we just need to recall that no two vertices of $\cN$ form a simplex, hence $\tau$ can contain at most one of the new vertices. Thus there is a face of $\tau$ spanned by at least $\abs{X} - d$ vertices from $\cC$, providing the rainbow simplex we were looking for.
\end{proof}

\subsection{The Independence Complex}

Meshulam~\cite{meshulam} proved a homological version of Theorem~\ref{thm:meshulam}, where everywhere in the statement $\conn$ is replaced by the homological connectedness $\conn_H$. He used  the Mayer-Vietoris sequence and the observation that, provided $G$ is simple, $\cI(G - e) = \cI(G) \cup (e * \cI(G \explode e))$ and $\cI(G) \cap (e * \cI(G \explode e))$ is the suspension of $\cI(G \explode e)$. (Once proved for simple graphs, Theorem~\ref{thm:meshulam} follows easily for arbitrary $G$.) Adamaszek and Barmak~\cite{adamaszekbarmak}, mostly concerned with a question of Aharoni, Berger, and Ziv~\cite{aharonibergerziv}, proved that the $\conn$ on the right hand side of inequality~\eqref{eq:meshulamtheorem} can be replaced with the following function $\psi$:
\[
	\psi(G) = \left\{ \begin{array}{ll}
	-2 & G = \emptyset \\
	+\infty & V(G) \neq \emptyset,  E(G) = \emptyset \\
	\max_{e \in E(G)} \min \lset{\psi(G - e), \psi(G \explode e) + 1}& \mbox{otherwise.}
	\end{array}
	\right.
\]
It can be easily seen by induction on $\abs{E(G)}$ that Theorem~\ref{thm:meshulam} implies the theorem of Adamaszek and Barmak~\cite{adamaszekbarmak}, but there seems to be no direct way to derive the implication in the other direction. However, the \emph{proof} in~\cite{adamaszekbarmak} can easily be modified to give Theorem~\ref{thm:meshulam}. One simply takes $e$ to be an arbitrary edge, defines $k = \min(\conn(G - e), \conn(G \explode e) + 1)$, and proceeds as in~\cite{adamaszekbarmak} to show that the homological connectedness of $G$ is at least $k$. To conclude that $\conn(G) \geq k$, one only needs to show that $k \geq 1$ implies that $\cI(G)$ is simply connected and then appeal to the Hurewicz Theorem. This can be done in an argument identical to the one in \cite{adamaszekbarmak}.

One can apply Theorem~\ref{thm:meshulam} to prove Theorem~\ref{thm:matchconn}.

\begin{proof}[Proof of Theorem~\ref{thm:matchconn}]
We proceed by induction on $\abs{E(J)}$. If $J$ contains an isolated vertex, the lemma is trivially true, since then $\conn(J) = \infty$. Thus we may assume that every vertex of $J$ has a neighbor. If $M = \emptyset$, the lemma is trivially true, since the connectedness of anything is at least $-2$, so assume $\abs{M} \geq 1$. Now consider an edge $m \in M \subseteq V(J)$. This edge (vertex of $J$) has a neighbor $e$ in $J$. Since $M \subseteq V(J - me)$, by induction we have $\conn(J - me) \geq \abs{M}/r - 2$. Now consider what happens when we explode $me$. We remove from $V(J)$ all neighbors of $m$ and $e$. Since $m \in M$, none of the neighbors of $m$ are in $M$, and since $e$ has size at most $r$, it intersects at most $r$ edges of $M$ (one of them being $m$). Therefore, $V(J \explode me)$ still contains a matching of size at least $\abs{M} - r$. By induction, we then have $\conn(J \explode me) \geq (\abs{M} - r)/r - 2 = \abs{M}/r - 3$. Applying Theorem~\ref{thm:meshulam}, we obtain
\[
	\conn(J) \geq \min \lset{\conn(J - me), \conn(J \explode me) + 1} \geq \frac{\abs{M}}{r} - 2,
\]
which is what was wanted.
\end{proof}

\end{document}